\documentclass[12pt,oneside,reqno]{article}
\usepackage{amsmath}
\usepackage{paralist}
\usepackage{graphics}
\usepackage{epsfig}
\usepackage{float}
\usepackage{graphicx}
\usepackage{epstopdf}
\usepackage{amssymb}
\usepackage{cite}
\usepackage{mathrsfs}
\usepackage{amsthm}
\usepackage{xcolor}
\usepackage{tikz}
\usepackage{caption}
\usepackage{subfigure}
\usepackage[colorlinks=true]{hyperref}
\hypersetup{urlcolor=blue, citecolor=blue}
\usepackage[utf8]{inputenc}
\usepackage[shortlabels]{enumitem}
\usepackage{cancel} 
\usepackage[margin=1.0in]{geometry}
\usepackage{overpic}
\usepackage[normalem]{ulem}

\numberwithin{equation}{section}

\newtheorem{theorem}{Theorem}[section]

\newtheorem{lemma}[theorem]{Lemma}
\newtheorem{proposition}{Proposition}

\newtheorem{physical conclusion}{Physical Conclusion}
\newtheorem{example}{Example}
\newtheorem{definition}[theorem]{Definition}
\newtheorem{remark}{Remark}

\definecolor{darkgreen}{rgb}{0,0.35,0}

\title{A center manifold reduction approach to the Darcy–B\'{e}nard convection problem with non-zero Prandtl number}

\author{
Liang Li$^{a}$
\thanks{liliang187@gxu.edu.cn}
\quad\quad
Quan Wang$^{b}$
\thanks{Corresponding author:xihujunzi@scu.edu.cn }
\\ \footnotesize $^a$ School of Mathematics,
\footnotesize Guangxi University,
 Nanning, Guangxi, 530000, P.R.China
  \\ \footnotesize $^{b}$ College of Mathematics, Sichuan University,
  \footnotesize
 Chengdu, Sichuan, 610065,  China
}
\medskip

\begin{document}
\maketitle
\begin{abstract}
We study the bifurcation of two-dimensional Darcy–B\'{e}nard convection (DBC) in a rectangular domain, a canonical model for thermal convection in porous media with applications in geophysics and engineering. The momentum equation lacks advection and viscous dissipation, being regularized solely by a linear Darcy damping term. As a result, the linearized operator generates a semigroup that is neither analytic nor compact and the nonlinear term fails to be Lipschitz. The system is thus placed outside the scope of the standard center-manifold theorem.

To overcome these obstructions, we develop a center-manifold reduction adapted to DBC system. Our main result is a constructive proof of the existence of the center manifold function $h$ and local attractivity of the center manifold—the exponential convergence of small solutions toward it—without relying on analyticity of the full linear semigroup and Lipschitz nonlinearity. We circumvent these difficulties by exploiting the partially dissipative structure: the temperature equation is governed by the Laplacian, which generates an analytic semigroup and provides the smoothing needed to compensate for the lack of regularity in the velocity and the absence of global Lipschitz bounds. Through carefully designed inequalities, we establish both the construction of the center manifold function $h$ and the exponential convergence of nearby solutions.

Explicit approximations for the center manifold are derived in two scenarios—one simple eigenvalue and two distinct eigenvalues—yielding reduced systems of ordinary differential equations whose analysis determines the bifurcation type. Numerical simulations are presented to corroborate the theoretical results.

\medskip
\noindent\textbf{Keywords:} Darcy-B\'{e}nard convection; center manifold; 
bifurcation.

\medskip
\noindent\textbf{AMS subject classifications:} 35B32, 35Q35, 37G10, 
37L10, 76E06, 76R10.

\end{abstract}

\newpage
\tableofcontents
\date{}
\maketitle
\section{Introduction}
The Rayleigh–B\'{e}nard convection (RBC) problem, concerning the buoyancy-driven instability of a horizontal fluid layer heated from below, constitutes one of the paradigmatic settings for the study of pattern formation in dissipative systems. Its mathematical structure—a nonlinear system of partial differential equations coupling the Navier–Stokes equations with a thermal energy balance—has served as a fertile ground for the development of bifurcation theory and nonlinear stability analysis  over the past century\cite{Feireisl2024,Venturi2010,Haragus20211,Wang2004,Wang2007 ,Chossat1996,MT2004,MT2007,Ma2019}. When the fluid saturates a porous medium of low permeability, the momentum balance reduces to Darcy's law, giving rise to the so-called Darcy–B\'{e}nard convection (DBC) problem, historically also referred to as the Horton–Rogers–Lapwood problem following the independent derivations by Horton and Rogers 
\cite{Horton1945} and Lapwood \cite{Lapwood1948}. For the classical configuration with isothermal, impermeable boundaries, linear stability theory predicts the existence of a critical Darcy–Rayleigh number, denoted by $R_{c}$, above which the motionless conduction state becomes linearly unstable\cite{Chandrasekhar1961,Nield2013}. This linear criterion, however, leaves open the fundamental nonlinear question of whether the bifurcating branches are supercritical or subcritical, and whether they are stable with respect to finite-amplitude perturbations. 

The system under consideration is a two-dimensional dimensionless model for the DBC problem in a rectangular porous cavity $\Omega=\left[0,L\right]\times\left[0,1\right]$. The fluid motion is governed by Darcy’s law, with velocity  $\mathbf{u}=\mathbf{u}\left(x,t\right)=\left(u_{1},u_{2}\right)$, pressure $p=p\left(x,t\right)$ and a modified temperature $\theta=\theta\left(x,t\right)$ satisfying an advection–diffusion equation. Under the Boussinesq approximation, the coupled system reads
\begin{align}\label{model}
    \begin{cases}
        \frac{1}{V_{a}}\frac{\partial \mathbf{u}}{\partial t}=-\nabla p -\mathbf{u}+R\theta \mathbf{e}_{2},
        \\
        \nabla\cdot\mathbf{u}=0,
        \\
        \frac{\partial \theta}{\partial t}+\mathbf{u}\cdot\nabla\theta=Ru_{2}+\Delta \theta,
    \end{cases}
\end{align}
where $\mathbf{e}_{2}=(0,1)$ is the upward unit vector.
The positive parameters $\frac{1}{V_{a}}$ and $R^2$ denote, respectively, Darcy-Prandtl number and the Rayleigh number. 
The boundary conditions are chosen to reflect a standard configuration: the top and bottom boundaries are isothermal, while the lateral walls are thermally insulating. In addition, the velocity satisfies the impermeability condition on the entire boundary. These conditions take the explicit form
\begin{align}\label{bianjian0814}
\mathbf{u}\cdot\mathbf{n}|_{\partial\Omega}=0,~
\theta|_{x_{2}=0,1}=0,~\nabla\theta\cdot\mathbf{\mathbf{n}}|_{x_{1}=0,L}=0,
\end{align}
where $\mathbf{n}$ denotes the outward unit normal on $\partial \Omega$.

The mathematical structure of system \eqref{model} is governed by the Darcy–Prandtl number $\frac{1}{V_{a}}$. In the limiting case $V_{a}\rightarrow +\infty$ (zero Darcy-Prandtl number), the momentum equation becomes stationary, after Leray projection, the linearized operator has a compact resolvent. The nonlinear dynamics of \eqref{model} have been extensively studied in this setting \cite{Turkyilmazoglu2023,Capone2023,Straughan2001,Han2020,Ly1999,Gupta1973,Liliang2025}. For the physically relevant case of non-zero Darcy–Prandtl number ($V_{a}<\infty$), several works have addressed the DBC problem from the viewpoint of dynamical behaviour \cite{VADASZ1998,Vadasz2000,Siddheshwar2021,Siddheshwar2024}. However, a rigorous bifurcation analysis for this regime is still lacking. The underlying difficulty is that, as observed by Fabrie\cite{Fabrie1996} and Oliver–Titi\cite{Oliver2000}, the solution semigroup for the DBC system is not smoothing, and the linearized operator fails to have a compact inverse on the natural function spaces. This loss of compactness is not a mere technicality: it excludes the system from the domain of applicability of the classical center-manifold theorem, which is the standard tool for studying bifurcations in infinite-dimensional systems. The aim of this work is to develop a rigorous bifurcation theory for this physically important yet mathematically non-standard setting. To this end, we establish a center-manifold reduction tailored to the system  \eqref{model}.

We now briefly recall the classical theory of center manifolds and explain why the system under consideration falls outside its applicability. To this end, we introduce the following abstract framework,
\begin{align}\label{ziji0730}
    \frac{dw}{dt}=\mathcal{L}w+\mathcal{G}(w)
\end{align}
where $\mathcal{L}:\mathcal{X}_{1}\rightarrow\mathcal{X}_{2}$ is a linear and bounded operator, $\mathcal{G}:\mathcal{X}_{3}\rightarrow\mathcal{X}_{2}$ is a nonlinear operator, $\mathcal{X}_{i}$($i=1,2,3$) are three Banach spaces with $\mathcal{G}\left(0\right)=0$ of higher order. Generally speaking, the existence of invariant manifolds requires only mild hypotheses. The construction of the center-manifold function $h$, however, is system-dependent and it is typically obtained as the fixed point of a contraction mapping, whose existence hinges on a local Lipschitz condition for the nonlinearity. Below we recall three  settings in which such a contraction is guaranteed:
\begin{enumerate}[label=(\arabic*)]
    \item Analytic semigroup setting\cite{Henry1993}. The operator $\mathcal{L}$ is sectorial and $\mathcal{G}$ satisfies $$\left\|\mathcal{G}\left(w_{1}\right)-\mathcal{G}\left(w_{2}\right)\right\|_{\mathcal{X}_{2}}\leq C\left\|w_{1}-w_{2}\right\|_{\mathcal{X}_{\beta}},$$ 
    where $\mathcal{X}_{\beta}$ is a suitable fractional Sobolev space.
    \item Strongly continuous semigroup with smooth nonlinearity \cite{Carr1981}. The operator $\mathcal{L}$ generates a strongly continuous semigroup, and $\mathcal{G}:\mathcal{X}_{2}\rightarrow\mathcal{X}_{2}$ is twice continuously Fr\'{e}chet differentiable.
    \item Completely continuous field setting\cite{ma2007}. The operator $\mathcal{L}$ is a completely continuous field, and $\mathcal{G}:\mathcal{X}_{2}\rightarrow\mathcal{X}_{2}$ is $C^{r}\left(r\geq 1\right)$ bounded mapping.
\end{enumerate}
A careful comparison shows that system \eqref{model}-\eqref{bianjian0814} does not fall within any of the three frameworks above. Indeed, the semigroup generated by the linear operator $\mathbf{L}$ (defined in \eqref{juyou0205}) is strongly continuous but not analytic: its spectrum has two accumulation points—one at $-\infty$ and the other at the finite negative value $-V_{a}$. Moreover, the local Lipschitz continuity of $\mathcal{G}$, that is 
\begin{align}\label{jiushi0822}
\left\|\mathcal{G}\left(w_{1}\right)
-\mathcal{G}\left(w_{2}\right)\right\|_{\mathcal{X}_{\beta}}\leq C\left\|w_{1}-w_{2}\right\|_{\mathcal{X}_{\beta}},
\end{align}
can be deduced from the conditions on $\mathcal{G}$ in \cite{Carr1981,ma2007}.
However, obviously,
the nonlinearity operator $G\left(\Phi\right)=\left(\mathbf{0},
        -\mathbf{u}\cdot\nabla\theta\right)$ in \eqref{model} does not satisfy 
 the above condition \eqref{jiushi0822} and this failure is readily verified.
 Additionally, Haragus and Iooss \cite{Haragus2011}  have developed an abstract framework for equations of the form \eqref{ziji0730}, in which the linear operator $\mathcal{L}$ is assumed to generate a strongly continuous semigroup and decompose as $\mathcal{L}=\mathcal{L}_{0}\oplus\mathcal{L}_{h}$, where $\mathcal{L}_{0}$ and $\mathcal{L}_{h}$ denote, respectively, the restrictions of 
$\mathcal{L}$ to the subspaces associated with eigenvalues having zero and non-zero real parts. They further require that the equation
 $$
 \frac{du_{h}}{dt}=\mathcal{L}_{h}u_{h}+f(t),~f \in \mathscr{C}_\eta(\mathbb R,\mathscr Y_h), 
 $$
 has a unique solution $u_{h}=\mathbf{K}_{h}f\in \mathscr{C}_\eta(\mathbb R,\mathscr{Z}_h)$ satisfying 
 $$
\left\|\mathbf{K}_{h}\right\|_{\mathscr{L}\bigl(\mathscr{C}_\eta(\mathbb R,\mathscr Y_h),\,\mathscr{C}_\eta(\mathbb R,\mathscr Z_h)\bigr)} \le \mathbf{C}(\eta),
 $$
 where $\eta\geq 0 $ is a constant, $\mathbf{C}\left(\cdot\right)\geq 0$ is a continuous mapping and the definitions of$ \mathscr{C}_\eta(\mathbb R,\mathscr Y_h)$ and $\mathscr{C}_\eta(\mathbb R,\mathscr{Z}_h)$ can be found in pp. 29 of \cite{Haragus2011}.
 While this framework is quite general, its verification requires detailed spectral information and delicate estimates that are not readily available for the coupled momentum–temperature system \eqref{model}. We therefore do not pursue this route. Thus, none of the classical routes to center-manifold reduction is available, and a genuinely new approach is required.
Our main results concern the existence, structure, exponential attractivity of the invariant manifold and bifurcation analysis for the system \eqref{model}-\eqref{bianjian0814}.
\begin{theorem}\label{zhongxinliuxing07276}System \eqref{model}-\eqref{bianjian0814} possesses a locally invariant manifold
$$
M_{l}\left(R\right)=\left\{\xi+h\left(\xi\right)|\xi\in \mathbf{X}_{c},~\left\|\xi\right\|_{\mathbf{X}_{\alpha}}~\text{is~small~sufficiently}\right\},
$$
 where $\mathbf{X}_{\alpha}=\mathbf{H}_{\alpha}\times H_{\alpha}$ is a fractional Sobolev space defined in \eqref{chouxiang0928}, $\frac{1}{2}<\alpha<1$, $\mathbf{X}_{c}$ and $\mathbf{X}_{s}$ are the critical and stable subspaces of $\mathbf{L}$, respectively and $h:~\mathbf{X}_{c}\rightarrow\mathbf{X}_{s}$ is the invariant manifold function defined in \eqref{hdedansheng1215}. 
 Specifically, for any initial value $\Phi_{0}\in M_{l}\left(R\right)$, there exists a $T=T\left(\Phi_{0}\right)>0$, such that 
the solution $\Phi\left(t;\Phi_{0}\right)$ to the system \eqref{model}-\eqref{bianjian0814} remains on $ M_{l}\left(R\right)$ as $t<T$. Additionally, for any initial value $\Phi_{0}$, if  $\left\|\Phi\left(t;\Phi_{0}\right)\right\|_{\mathbf{X}_{\alpha}}$ is sufficiently small for all $t>0$, the solution $\left\|\Phi\left(t;\Phi_{0}\right)\right\|_{\mathbf{X}_{\alpha}}$ converges to $M_{l}\left(R\right)$  exponentially as $t\rightarrow+\infty$.
\end{theorem}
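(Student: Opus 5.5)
We construct $h$ by a Lyapunov--Perron fixed point adapted to the partially dissipative structure, and we replace global Lipschitz bounds by a cut-off on the critical modes combined with a priori smallness in $\mathbf{X}_{\alpha}$.

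\textbf{Setup.} After Leray projection, write the system as $\frac{d\Phi}{dt}=\mathbf{L}\Phi+G(\Phi)$ with $\Phi=(\mathbf{u},\theta)$ and $G(\Phi)=(\mathbf{0},-\mathbf{u}\cdot\nabla\theta)$. Let $P_{c}$ and $P_{s}=I-P_{c}$ be the spectral projections onto $\mathbf{X}_{c}$ and $\mathbf{X}_{s}$. Since $\mathbf{X}_{c}$ is finite dimensional, it is spanned by eigenvectors with eigenvalues near $0$. Apart from the accumulation points $-V_{a}$ and $-\infty$, the rest of the spectrum lies in $\{\mathrm{Re}\,\lambda\le -\sigma\}$ for some $\sigma>0$.

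\textbf{Step 1: stable semigroup estimates.} We first show
\[
\|e^{\mathbf{L}t}P_{s}\|_{\mathbf{X}_{\alpha}\to\mathbf{X}_{\alpha}}\le Ce^{-\sigma t},
\]
and, for the forcing that actually occurs,
\[
\|e^{\mathbf{L}t}P_{s}(\mathbf{0},f)\|_{\mathbf{X}_{\alpha}}\le C t^{-\alpha}e^{-\sigma t}\|f\|_{L^{2}}.
\]
The point is that the nonlinearity enters only the $\theta$-equation. There the heat semigroup is analytic and gives the $t^{-\alpha}$ smoothing. The velocity is recovered from the temperature by the damped ODE $\frac{1}{V_{a}}\partial_{t}\mathbf{u}=-\mathbf{u}+R\,\mathbb{P}(\theta\mathbf{e}_{2})$, which maps $\theta\in H_{\alpha}$ to $\mathbf{u}\in\mathbf{H}_{\alpha}$ with factor $e^{-V_{a}t}$ and no loss of derivatives. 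We prove the second estimate by a variation-of-constants (Duhamel) decomposition in Fourier modes, using the explicit eigenfunctions $\sin(k\pi x_{2})\cos(j\pi x_{1}/L)$. For each mode the linear operator is a $2\times2$ matrix, and we bound its exponential uniformly in the mode with the smoothing gain coming from the $\theta$ entry.

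\textbf{Step 2: the nonlinear bound.} Because $\alpha>\frac{1}{2}$, Sobolev embeddings give
\[
\|\mathbf{u}\cdot\nabla\theta_{1}-\mathbf{v}\cdot\nabla\theta_{2}\|_{L^{2}}\le C(\|\Phi_{1}\|_{\mathbf{X}_{\alpha}}+\|\Phi_{2}\|_{\mathbf{X}_{\alpha}})\|\Phi_{1}-\Phi_{2}\|_{\mathbf{X}_{\alpha}}.
\]
This is the "$\mathbf{X}_{\alpha}\to L^{2}$ in the $\theta$-slot" Lipschitz property. It is exactly what Step 1 absorbs, since $\alpha<1$ makes $t^{-\alpha}$ integrable. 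This replaces \eqref{jiushi0822}.

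\textbf{Step 3: fixed point and invariance.} Truncate $G$ by $\chi(\|P_{c}\Phi\|/\rho)$ so that the Lipschitz constant is $O(\rho)$. For $\xi\in\mathbf{X}_{c}$, define on the weighted space of $\mathbf{X}_{\alpha}$-valued functions on $(-\infty,0]$ with weight $e^{\eta t}$, $0<\eta<\sigma$, the map
\[
\mathcal{T}\Phi(t)=e^{\mathbf{L}_{c}t}\xi-\int_{t}^{0}e^{\mathbf{L}_{c}(t-s)}P_{c}G\,ds+\int_{-\infty}^{t}e^{\mathbf{L}(t-s)}P_{s}G\,ds.
\]
By Steps 1--2 the last integral is bounded by $C\rho\int_{0}^{\infty}\tau^{-\alpha}e^{-(\sigma-\eta)\tau}d\tau$. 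For small $\rho$ this makes $\mathcal{T}$ a contraction. We then set $h(\xi)=P_{s}\Phi_{\xi}(0)$ and obtain invariance by the usual translation argument. The manifold is only local invariant, with $T(\Phi_{0})$ the exit time from the region where the cut-off is inactive.

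\textbf{Step 4: attractivity (main obstacle).} For a solution staying small in $\mathbf{X}_{\alpha}$, set $z=P_{s}\Phi-h(P_{c}\Phi)$. We aim to show $\|z(t)\|_{\mathbf{X}_{\alpha}}\le Ce^{-\gamma t}$, either by the Carr-type argument or by finding a trajectory on $M_{l}$ that shadows the solution via a second fixed point on $[0,\infty)$. The difficulty is that $h$ is only Lipschitz, not $C^{1}$, in the weak $\mathbf{X}_{\alpha}$ topology, and the non-analytic velocity part decays only like $e^{-V_{a}t}$. We would first try the shadowing formulation: write the difference $w$ and solve
\[
w(t)=e^{\mathbf{L}t}P_{s}w(0)+\int_{0}^{t}e^{\mathbf{L}(t-s)}P_{s}[G(\cdot+w)-G(\cdot)]\,ds-\int_{t}^{\infty}e^{\mathbf{L}_{c}(t-s)}P_{c}[\cdots]\,ds
\]
in a space weighted by $e^{\gamma t}$, with $\gamma<\min(\sigma,V_{a})$. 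The same singular-kernel estimate from Step 1 gives a contraction, and this yields the exponential convergence.
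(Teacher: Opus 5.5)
Your route is genuinely different from the paper's, and it avoids a weakness in the paper's contraction argument. However, one step fails as written: the cut-off, discussed in the second paragraph.

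\emph{Comparison.} The paper keeps the nonlinearity in $\mathbf{X}_\alpha$. Since $\mathbf{u}\cdot\nabla\theta\in H_\alpha$ requires $\theta\in H_{\alpha+\frac12}$, the paper obtains that regularity from a priori $H_{3/2}$ bounds (backward) and $H_{\alpha+\frac12}$ bounds (forward) on the temperature along solutions; see \eqref{houmianxuyao0119} and Lemmas~\ref{xuyaode1231}--\ref{xuyaode0116}. It then uses only the plain decay of $T(t)\mathbf{P}_s$ on $\mathbf{X}_\alpha$, and proves attractivity by estimating $v=\Phi_s-h(\Phi_c)$ directly with an iterated Gronwall inequality.

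You instead measure the nonlinearity in $\{\mathbf{0}\}\times L^2$. There it is genuinely locally Lipschitz on $\mathbf{X}_\alpha$, because $\mathbf{H}_\alpha\hookrightarrow L^\infty$ in two dimensions for $\alpha>\frac12$ and $H_\alpha\hookrightarrow H^1$. You then put the smoothing into the linear estimate $\|T(t)\mathbf{P}_s(\mathbf{0},f)\|_{\mathbf{X}_\alpha}\le Ct^{-\alpha}e^{-\sigma t}\|f\|_{L^2}$. That estimate does hold mode by mode, possibly after lowering $\sigma$ slightly:
\begin{itemize}
\item The $2\times2$ blocks have uniformly bounded eigenvector conditioning: nearly orthogonal eigenvectors for large $r_{kl}$, and distinct real eigenvalues ($\Delta_{kl}>0$) for the finitely many remaining blocks.
\item The temperature entry gives $r_{kl}^{2\alpha}e^{-r_{kl}^2t}\lesssim t^{-\alpha}$.
\item The $\mathbf{H}_\alpha$-weighted velocity response is of size $r_{kl}^{2\alpha-2}\min(1,r_{kl}^2t)$, which is bounded because $\alpha<1$.
\end{itemize}

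What your route buys: your Lipschitz bound holds for arbitrary elements of the weighted space. Proposition~\ref{babay0120} instead obtains its bound by inserting the backward Duhamel formula for $\theta_1,\theta_2$, i.e.\ it uses that its arguments are already solutions, which a contraction on $C^-_\gamma$ cannot assume. Likewise, your shadowing fixed point on $[0,\infty)$ needs only a small $\mathrm{Lip}(h)$ and $H_\alpha$ regularity, and it yields an asymptotic phase. What the paper's route buys: it uses smoothing only of the heat semigroup, not of the coupled non-normal blocks, and it produces the weighted $H_{3/2}$ bound on the temperature as a by-product.

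\emph{The cut-off.} With $\chi(\|\mathbf{P}_c\Phi\|/\rho)$ alone, the stable component is not controlled: elements of your weighted space may grow like $e^{-\eta t}$ as $t\to-\infty$. Since $G$ is quadratic in $\mathbf{P}_s\Phi$, the truncated nonlinearity then has no global $\mathbf{X}_\alpha\to L^2$ Lipschitz constant of order $\rho$, and the contraction in Step 3 is unjustified. There are two fixes:
\begin{itemize}
\item Cut off the full norm, as the paper does with $\rho(\|\Phi\|_{\mathbf{X}_\alpha}/r)$ in \eqref{jiale0727}. From $\|G(\Phi)\|_{L^2}\le C\|\Phi\|_{\mathbf{X}_\alpha}^2$ and the fact that $\chi(\cdot/\rho)$ is Lipschitz with constant $O(1/\rho)$, you get a global constant $O(\rho)$.
\item Alternatively, restrict to the closed set $\sup_{t\le 0}\|\mathbf{P}_s\Phi(t)\|_{\mathbf{X}_\alpha}\le\rho$, and check that it is mapped into itself using the $O(\rho^2)$ bound on the stable integral.
\end{itemize}

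Two smaller points to make explicit. First, since the critical eigenvalue $\lambda_0$ is nonzero for $R\neq R_c$, the weights must satisfy $|\lambda_0|<\eta<\sigma$ and $|\lambda_0|<\gamma$. Second, in Step 4 the initial condition is $\mathbf{P}_s w(0)=\mathbf{P}_s\Phi(0)-h(\mathbf{P}_c\Phi(0)-\mathbf{P}_c w(0))$ with $\mathbf{P}_c w(0)=-\int_0^\infty e^{-\mathbf{L}_c s}\mathbf{P}_c[\cdots]\,ds$. This is where the bound $\mathrm{Lip}(h)=O(\rho)$ from Step 3 enters the contraction constant.
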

By virtue of center-manifold reduction, we obtain the following conclusions.
\begin{theorem}\label{fenqi0729}
For system \eqref{model}-\eqref{bianjian0814} with 
$R$ in a neighborhood of the critical value 
  defined in \eqref{linjie0926}, the following conclusions hold.
\begin{enumerate}[label=(\arabic*)]
    \item 
    When the first eigenvalue of 
$\mathbf{L}$ is simple, the bifurcation is of pitchfork type. Whether the bifurcation is supercritical or subcritical is determined by the sign of the nondimensional coefficient 
$a$ defined in \eqref{diyige0210}, as presented in \autoref{diyidingli0211}.
    \item When the first eigenvalue of $\mathbf{L}$ is double, the bifurcation is again of pitchfork type. Its direction—supercritical or subcritical— is determined by the signs and relative magnitudes of the four nondimensional coefficients $a_{11}$, $a_{22}$, $b_{11}$ and $b_{22}$ defined in \eqref{xishu0210}, as described in \autoref{dierdingli0211}.
\end{enumerate}    
\end{theorem}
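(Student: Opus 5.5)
The plan is to first reduce Theorem \ref{fenqi0729} to a finite-dimensional problem through the center manifold of Theorem \ref{zhongxinliuxing07276}, and then to read off the bifurcation type from the reduced ordinary differential equations.

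\textbf{Spectral setup.} Apply the Leray projection to \eqref{model}. The velocity becomes $\mathbf{u}=\nabla^{\perp}\psi$, and the linear problem separates in Fourier modes $\cos(k\pi x_1/L)\sin(j\pi x_2)$ (and the matching stream-function modes). For each wavenumber pair $(k,j)$, with $\gamma^2=(k\pi/L)^2+(j\pi)^2$, we obtain a $2\times 2$ eigenvalue problem
\[
\begin{pmatrix} -V_a & V_a R\,k\pi/(L\gamma)\\ R\,k\pi/(L\gamma) & -\gamma^2\end{pmatrix},
\]
or an equivalent matrix after the right normalization.
\begin{itemize}
\item Its largest eigenvalue $\beta_{kj}(R)$ is real.
\item $\beta_{kj}(R)$ crosses zero exactly when $R^2=\gamma^4L^2/(k^2\pi^2)$.
\item This gives the critical value $R_c$ in \eqref{linjie0926}, obtained by minimizing over $(k,j)$, with $j=1$ optimal.
\end{itemize}
I then verify the principle of exchange of stabilities: $\beta_1(R)<0$, $=0$, $>0$ according as $R<R_c$, $=R_c$, $>R_c$, and the rest of the spectrum stays uniformly negative (including the accumulation at $-V_a$). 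The first eigenvalue is simple when a single $k$ attains the minimum. It is double when two wavenumbers $k_1\neq k_2$ attain it, which happens for special aspect ratios $L$.

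\textbf{Reduction.} Write $\Phi=\sum_i y_i e_i+h(y)$ and project onto the critical eigenvectors using the adjoint eigenvectors $e_i^*$. This gives
\[
\dot y_i=\beta_i(R)y_i+\langle G(\sum y_je_j+h),e_i^*\rangle/\langle e_i,e_i^*\rangle.
\]
The quadratic interaction $\langle G(e_i,e_j),e_l^*\rangle$ vanishes by trigonometric orthogonality: $\mathbf{u}\cdot\nabla\theta$ of two critical modes only produces modes with $j\in\{0,2\}$ and horizontal wavenumbers $k_i\pm k_j$. The leading terms are therefore cubic, and I need the second-order approximation of $h$, namely
\[
-\mathbf{L}h_2=P_sG(\textstyle\sum y_ie_i)
\]
(or the corresponding Ma–Wang formula $h=\sum y_iy_j\Phi_{ij}/(\beta_i+\beta_j-\beta_{ij})$, evaluated at $R=R_c$). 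I compute $h_2$ explicitly on the finitely many generated modes. The dominant contribution is the $(0,2)$ mode $\sin(2\pi x_2)$, which is purely thermal, and the velocity component of $h_2$ is zero there. This yields
\[
\dot y=\beta_1 y+a y^3+o(|y|^3)
\]
in the simple case, defining $a$ in \eqref{diyige0210}, with $a<0$ expected by an energy-type sign argument. In the double case it yields
\[
\dot y_1=\beta y_1+a_{11}y_1^3+a_{12}y_1y_2^2,\qquad \dot y_2=\beta y_2+b_{11}y_1^2y_2+b_{22}y_2^3,
\]
where the notation should be adjusted to match \eqref{xishu0210}.

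\textbf{Bifurcation analysis.} In the simple case a standard pitchfork analysis applies: the bifurcation is supercritical if $a<0$ and subcritical if $a>0$. In the double case I analyze the cubic vector field on the sphere.
\begin{itemize}
\item The invariant axes give the pure-mode steady states.
\item Mixed steady states satisfy a linear system in $(y_1^2,y_2^2)$.
\item Their existence and stability depend on the signs of $a_{11}$ and $b_{22}$ and on the determinant $a_{11}b_{22}-a_{22}b_{11}$, via the attractor bifurcation theorem of Ma–Wang.
\end{itemize}
Finally, Theorem \ref{zhongxinliuxing07276}, in particular its local attractivity, transfers the stability of the reduced dynamics back to the PDE.

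\textbf{Main obstacle.} I expect the hardest step to be justifying that the truncated cubic reduction controls the full dynamics, because $h$ is only known on $\mathbf{X}_\alpha$. This needs the estimate $\|h(y)-h_2(y)\|_{\mathbf{X}_\alpha}=o(|y|^2)$, which I would obtain from the fixed-point construction of $h$ in \eqref{hdedansheng1215}. The second difficulty is the careful bookkeeping of the adjoint eigenvectors, since $\mathbf{L}$ is not self-adjoint when $V_a\neq 1$ in the natural inner product; I would first check whether a weighted inner product symmetrizes it.
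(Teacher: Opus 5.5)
Your proposal is correct and follows essentially the same route as the paper. The paper likewise obtains $R_c$ and the exchange of stability from the $2\times2$ modal problems, and approximates $h$ to second order by integrating $\mathbf{P}_s\widetilde{\mathbf{G}}$ of the linearized critical flow over $(-\infty,0]$; this gives your denominators $\beta_i+\beta_j-\beta_{ij}$, e.g.\ $2\lambda_{k_0,1}^1+4\pi^2$ in \eqref{gongshi0209}. It then projects onto the dual vectors of Remark \ref{fangbian0205} to get the cubic systems of Lemma \ref{yuehua0210}, and classifies the equilibria by Jacobian computations and a degree argument. The remainder estimate $\|h-h_2\|_{\mathbf{X}_{\alpha}}=o(|y|^2)$, which you single out as the main obstacle, is also handled only formally in the paper (through the ``$\approx$'' and $o(2)$ in \eqref{bijin0209}), so your plan is as complete as the published argument.
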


\autoref{zhongxinliuxing07276} follows from \autoref{yashuo0727} and \autoref{bijin0727}. In the classical setting, the invariant manifold function $h$ is obtained by constructing a contraction mapping $\mathbf{F}$. When the semigroup generated by the linear operator is analytic—or, alternatively, when the nonlinearity is assumed to be local Lipschitz—the contraction property follows in a standard way. In the present system, however, neither of these conditions holds. To overcome this difficulty, we prove an auxiliary inequality
\begin{align}\label{houmianxuyao0729}
\sup\limits_{t\leq 0}e^{\gamma t}
\left\|\theta\right\|_{H_{\frac{3}{2}}}\leq C\left(R,r\right)\sup\limits_{t\leq 0}e^{\gamma t}\left\|\mathbf{u}\right\|_{\mathbf{H}_{\alpha}},
\end{align}
whose validity relies on the fact that the temperature subsemigroup—generated by the Laplacian—is analytic. Indeed, for initial data in $\mathbf{X}_{\alpha}$, the temperature is not only bounded in $H_{\alpha}$
  but is actually upgraded to $H_{\frac{3}{2}}$
  regularity which is a key step to obtain \eqref{houmianxuyao0729}. Inequality \eqref{houmianxuyao0729} allows us to establish Proposition \ref{babay0120}, which in turn implies the existence of contraction mapping. Concerning the attractivity of the invariant manifold, the classical proof proceeds via an inequality 
\begin{align}\label{guanjianbudnegshi0729}
    \begin{aligned}
        \left\|v\left(t\right)\right\|_{\mathbf{X}_{\alpha}}
        \leq Ce^{-\gamma_{0} t}+
        Cr\int_{0}^t e^{-\gamma_{0}\left(t-\tau\right)}\left\|v\left(\tau\right)\right\|_{\mathbf{X}_{\alpha}}d\tau,
\end{aligned}
\end{align}
where $v\left(t\right)=\Phi_{s}\left(t\right)-h\left(\Phi_{c}\left(t\right)\right)$, $\Phi_{s}\left(t\right)\in \mathbf{X}_{s}$ and $\Phi_{c}\left(t\right)\in \mathbf{X}_{c}$, $\gamma_{0}>0$ is a constant defined in \eqref{zuida1212} and $r>0$ is a sufficiently small constant. Inequality \eqref{guanjianbudnegshi0729} follows either from analyticity of the full semigroup $e^{\mathcal{L}t}$ or from locally Lipschitz assumption on the nonlinearity $\mathcal{G}$ since the following inequality holds:
\begin{align}\label{zaiyi0822}
    \left\|\int_{0}^{t}e^{\mathcal{L}\left(t-\tau\right)}\left(\mathcal{G}\left(w_{1}\right)-\mathcal{G}\left(w_{2}\right)\right)d\tau\right\|_{\mathbf{X}_{\alpha}}\leq C \int_{0}^{t}e^{-\gamma_{0}\left(t-\tau\right)}\left\|w_{1}-w_{2}\right\|_{\mathbf{X}_{\alpha}}d\tau.
\end{align}
In the present setting, such an estimate \eqref{zaiyi0822} is not directly available. To circumvent this, we find that in system \eqref{model} there holds
$$
\int_{0}^{t}T\left(t-\tau\right)\left\|G\left(\Phi\right)-G\left(\widetilde{\Phi}\right)\right\|_{\mathbf{X}_{\alpha}}d\tau\leq \int_{0}^{t}C\left(t,\tau\right)\left\|\Phi-\widetilde{\Phi}\right\|_{\mathbf{X}_{\alpha}}d\tau,
$$
where $T\left(t\right)$ is the strongly continuous semigroup generated by $\mathbf{L}$ and $C\left(t,\tau\right)>0$ depends on $t$ and $\tau$ if $\left\|\theta\right\|_{H_{\alpha+\frac{1}{2}}}$ and $\left\|\widetilde{\theta}\right\|_{\alpha+\frac{1}{2}}$ are bounded and controlled by $\left\|\Phi\right\|_{\mathbf{H}_{\alpha}}$ and $\left\|\widetilde{\Phi}\right\|_{\mathbf{H}_\alpha}$. Thus,
we derive two auxiliary estimates--Lemmas \ref{xuyaode1231} (provides the boundedness of $\left\|\theta\right\|_{H_{\alpha+\frac{1}{2}}}$ in terms of $\left\|\Phi\right\|_{\mathbf{X}_{\alpha}}$) and \ref{xuyaode0116} (controls the difference  $\left\|\theta-\widetilde{\theta}\right\|_{H_\alpha+\frac{1}{2}}$ by $\left\|\Phi-\widetilde{\Phi}\right\|_{\mathbf{X}_{\alpha}}$)--again exploiting the analytic smoothing of the temperature subsemigroup. These estimates ultimately yield the desired inequality \eqref{guanjianbudnegshi0729}, and hence the exponential attraction to the invariant manifold. Our approach provides a general mechanism for proving the structure and attractivity of center-manifold in systems where only a subsemigroup is analytic—a feature that may be of independent interest beyond the DBC problem.

 This paper is organized as follows. Section \ref{dierjie0204} presents the functional setting and the spectral properties of the linear operator.  Section \ref{jie0205} establishes the well-posedness of system \eqref{model}. Section \ref{buxian0205} proves  the existence of invariant manifold, its exponential attracting property and gives its approximation formula. Section \ref{yuehua0205} applies the center-manifold reduction to derive the bifurcation theorem. Section \ref{shuzhi0205} presents numerical simulations and Section \ref{jielun0719} provides concluding remarks.
\section{Preliminaries}\label{dierjie0204}
\subsection{The abstract mathematical model}\label{2.1xiaojie0204}


In what follows, we reformulate the system of equations given by \eqref{model}–\eqref{bianjian0814} within an abstract functional framework, which provides a convenient setting for investigating the system from the perspective of dynamical transitions. First, define 
\begin{align}\label{kongjian0211}
    \begin{aligned}
        &\mathcal{D}_{1}=\left\{\mathbf{u}\in \left[C^{\infty}\left(\Omega\right)\right]^2|\nabla\cdot\mathbf{u}=0,~\mathbf{u}\cdot\mathbf{n}|_{\partial\Omega}=\frac{\partial u_{1}}{\partial x_{2}}|_{x_{2}=0,1}=\frac{\partial u_{2}}{\partial x_{1}}|_{x_{1}=0,L}=0\right\},
        \\
        &\mathcal{D}_{2}=\left\{\theta\in C^{\infty}\left(\Omega\right)|\theta|_{x_{2}=0,1}=\nabla\theta\cdot\mathbf{n}|_{x_{1}=0,L}=0\right\}.
    \end{aligned}
\end{align}

 $L^2\left(\Omega\right)$, $H^{1}\left(\Omega\right)$, and $H^{2}\left(\Omega\right)$ denote the standard Lebesgue space and Sobolev spaces, respectively.  
Furthermore, let $\mathbf{E}_{1}$ and $\widetilde{\mathbf{E}}_{1}$ be the closure of $\mathcal{D}_{1}$ with respect to the $L^2$-norm and $H^{1}$-norm, respectively. Similarly, let $\mathbf{E}_{2}$ and $\widetilde{\mathbf{E}}_{2}$ be the closure of $\mathcal{D}_{2}$  with respect to the $H^1$-norm and $H^{2}$-norm, respectively. Then, according to \cite{Temam2013} and Trace Theorem \cite{Evans2010,Galdi2011},  
\begin{align}\label{lingwai0211}
    \begin{cases}
        \mathbf{E}_{1}
        =\left\{\mathbf{u}
        \in \left[L^2\left(\Omega\right)\right]^2|
\nabla\cdot\mathbf{u}=0~\text{and~}\mathbf{u}\cdot\mathbf{n}|_{\partial\Omega}=0~\text{in~distributional~sense}
        \right\},
        \\
\widetilde{\mathbf{E}}_{1}
        =\left\{\mathbf{u}
        \in \left[H^1\left(\Omega\right)\right]^2|
\nabla\cdot\mathbf{u}=0~\text{and~}\mathbf{u}\cdot\mathbf{n}|_{\partial\Omega}=0
        \right\},
        \\
\mathbf{E}_{2}=
        \left\{\theta\in H^{1}\left(\Omega\right)|\theta|_{x_{2}=0,1}=0\right\},
        \\
 \widetilde{\mathbf{E}}_{2}=\{\theta\in H^2\left(\Omega\right)|\theta|_{x_{2}=0,1}=\nabla\theta\cdot\mathbf{n}|_{x_{1}=0,L}=0\}.
    \end{cases}
\end{align}

Then,
 defining \(\Phi=\left(\mathbf{u},\theta\right)\), we give the Hilbert space as follows
 \begin{align}\label{kongjian0120}
        \mathbf{H}=\left\{\Phi\in [L^2(\Omega)]^3|\nabla\cdot\mathbf{u}=0\text{~and~}
\mathbf{u}\cdot\mathbf{n}|_{\partial\Omega}=0~\text{in~distributional~sense}\right\}.
\end{align}
All Hilbert spaces \eqref{lingwai0211}-\eqref{kongjian0120} are endowed with their natural inner products.

Next,
we introduce the linear operator $\mathbf{L}:\mathbf{E}_{1}\times\widetilde{\mathbf{E}}_{2}\rightarrow \mathbf{H}$ and nonlinear operator \(G:\widetilde{\mathbf{E}}_{1}\times\widetilde{\mathbf{E}}_{2}\rightarrow \mathbf{H}\) as follows: $\forall \Phi=(\mathbf{u},\theta)$ and 
\(\widetilde{\Phi}=\left(\widetilde{\mathbf{u}},\widetilde{\theta}\right)\), 
\begin{align}\label{juyou0205}
    \begin{aligned}
    \mathbf{L}\Phi=\begin{pmatrix}
        -V_{a}\mathbf{u}+V_{a}\mathbf{P}(R\theta \mathbf{e}_{2})
        \\
        \Delta\theta+Ru_{2}
    \end{pmatrix},~
    G\left(\Phi,\widetilde{\Phi}\right)
    =\begin{pmatrix}
        \mathbf{0}
        \\
        -\mathbf{u}\cdot\nabla\widetilde{\theta}
    \end{pmatrix},
\end{aligned}
\end{align}
where $\mathbf{P}:\left[L^2\left(\Omega\right)\right]^2\rightarrow \left[L^2\left(\Omega\right)\right]^2$ is the Leray projection operator. 

With the help of the operators $\mathbf{L}$ and $G$, the system 
\eqref{model}-\eqref{bianjian0814} can be rewritten in forms
\begin{align}\label{suanzixing0708}
\begin{cases}
\frac{d\Phi}{dt}=\mathbf{L}\Phi+
\mathbf{G}\left(\Phi\right),~\Phi\in \widetilde{\mathbf{E}}_{1}\times \widetilde{\mathbf{E}}_{2}
\\
\Phi\left(t_{0}\right)=\Phi_{0}=\left(
\mathbf{u}_{0},\theta_{0}\right),
\end{cases}
\end{align}
where \(\mathbf{G}\left(\Phi\right)
=G\left(\Phi,\Phi\right)\) and 
\(\Phi\left(t_{0}\right)\) is the initial data.

\subsection{The eigenvalue problem}\label{puwenti0204}
This section proves that the linear operator $\mathbf{L}$ defined in \eqref{juyou0205} has a complete eigenvector system, using the completeness of the eigenvector systems of $\mathbf{P}\left(\Delta,\Delta\right)^{T}$ and $\Delta$. Specifically, for the following two eigenvalue-problems:
\begin{align}\label{fang10205}
    \begin{aligned}
        \mathbf{P}\Delta \mathbf{u}=\beta \mathbf{u},~\nabla\cdot\mathbf{u}=\mathbf{u}\cdot\mathbf{n}|_{\partial\Omega}=\frac{\partial u_{1}}{\partial x_{2}}|_{x_{2}=0,1}=\frac{\partial u_{2}}{\partial x_{1}}|_{x_{1}=0,L}=0,
    \end{aligned}
\end{align}
and
\begin{align}\label{fang20205}
    \begin{aligned}
        \Delta\theta=\widetilde{\beta}\theta, ~\theta|_{x_{2}=0,1}=\nabla\theta\cdot\mathbf{n}|_{x_{1}=0,L}=0.
    \end{aligned}
\end{align}
By the spectral theory of compact symmetric linear operators \cite{Evans2010}, operators $\mathbf{P}\left(\Delta,\Delta\right)^{T}$ and $\Delta$  have complete eigenvector systems, whose eigenvectors are directly given by calculation as follows:
\begin{align}\label{diderzu0814}
\mathbf{e}_{kl}=\left(l\pi\sin{\frac{k\pi x_{1}}{L}}\cos{l\pi x_{2}},-\frac{k\pi}{L}\cos{\frac{k\pi x_{1}}{L}}\sin{l\pi x_{2}}\right), ~k,l=1,2,\cdots,
\end{align}
and 
\begin{align}\label{tezhenxiangliang0926}
e_{kl}=\cos{\frac{k\pi x_{1}}{L}}\sin{l\pi x_{2}},~k=0,1,2,\cdots,l=1,2,\cdots.
\end{align}
Thus, for any $\mathbf{u}\in \mathbf{E}_{1}$ and $\theta\in L^2\left(\Omega\right)$ satisfying $\theta|_{x_{2}=0,1}=\nabla\theta\cdot\mathbf{n}|_{x_{1}=0,L}=0$, we can rewritten them in forms:
\begin{align}\label{jishu0205}
    \mathbf{u}=\sum\limits_{k,l=1}^{+\infty}u_{kl}\mathbf{e}_{kl},~\theta=\sum\limits_{k=0,l=1}^{+\infty}\theta_{kl}e_{kl}.
\end{align}
Subsequently, we consider the following eigenvalue problem:
\begin{align*}
    \begin{cases}
        -V_{a}\mathbf{u}+V_{a}\mathbf{P}(R\theta\mathbf{e}_{2})=\lambda \mathbf{u},
        \\
    \Delta\theta+Ru_{2}=\lambda\theta,
        \\
        \nabla\cdot\mathbf{u}=0,
        \\
\Phi=\left(\mathbf{u},\theta\right)~\text{satisfies~}\eqref{bianjian0814}.
    \end{cases}
\end{align*}
which is equivalent to 
\begin{align}\label{dengjia0708}
    \begin{cases}
        -V_{a}\frac{\partial p}{\partial x_{1}}=(\lambda+V_{a})u_{1},~
        -V_{a}\frac{\partial p}{\partial x_{2}}+V_{a}R\theta=(\lambda+V_{a})u_{2},
        \\
        \Delta\theta+Ru_{2}=\lambda \theta,
        ~
        \frac{\partial u_{1}}{\partial x_{1}}+\frac{\partial u_{2}}{\partial x_{2}}=0,
        \\
\mathbf{u}\cdot\mathbf{n}|_{\partial\Omega}=0,~
\theta|_{x_{2}=0,1}=0,~\nabla\theta\cdot\mathbf{\mathbf{n}}|_{x_{1}=0,L}=0.
    \end{cases}
\end{align}
Substituting \eqref{jishu0205} into \eqref{dengjia0708}, we have the following two conclusions: 

(1) when $k\neq 0$,
the characteristic polynomial is as follows,
\begin{align}\label{danshi0814}
\lambda^2+\left(V_{a}+r_{kl}^2\right)\lambda 
+V_{a}r_{kl}^2-\frac{V_{a}R^2k^2\pi^2}{L^2r_{kl}^2}=0,
\end{align}
where \(r_{kl}^2=\frac{k^2\pi^2}{L^2}+l^2\pi^2\).
A simple calculation gives that 
\(
\Delta_{kl}=\left(V_{a}-r_{kl}^2\right)^2
+\frac{4V_{a}R^2k^2\pi^2}{L^2r_{kl}^2}>0,
\)
which indicates that 
all eigenvalues of \(\mathbf{L}\) are real.  We also obtain the eigenvalues and corresponding 
eigenvectors as follows
\begin{align}\label{tezhenzhi0814}
\lambda_{k,l}^1=\frac{-\left(V_{a}+r_{kl}^2\right)
+\sqrt{\Delta_{kl}}}{2},~
\lambda_{k,l}^2=\frac{-\left(V_{a}+r_{kl}^2\right)
-\sqrt{\Delta_{kl}}}{2},
\end{align}
and 
\begin{align}\label{tehzenxiangliang0814}
\begin{aligned}
\Phi_{k,l}^{j}=\frac{2}{\sqrt{\left[1+\left(u_{k,l}^j\right)^2r_{kl}^2\right]L}}
\begin{pmatrix}
    u_{k,l}^{j}l\pi \sin{\frac{k\pi x_{1}}{L}}\cos{l\pi x_{2}},
    \\
    -u_{k,l}^{j}\frac{k\pi}{L} \cos{\frac{k\pi x_{1}}{L}}\sin{l\pi x_{2}},
    \\
    \cos{\frac{k\pi x_{1}}{L}}\sin{l\pi x_{2}}
\end{pmatrix},
p_{k,l}^{j}=\widetilde{p}_{k,l}^{j}\cos{\frac{k\pi x_{1}}{L}}\cos{l\pi x_{2}},
\end{aligned}
\end{align}
where \(k,l=1,2,\cdots,\) \(u_{k,l}^{j}=-\frac{\left(\lambda_{kl}^{j}+r_{kl}^2\right)L}{Rk\pi}\), \(\widetilde{p}_{kl}^{j}=\frac{\left(\lambda_{kl}^{j}+V_{a}\right)u_{kl}^{j}lL}{V_{a}k}\) and 
\(j=1,2\).

(2) when $k=0$, the eigenvalues $\lambda_{0l}$ and corresponding eigenvectors are as follows,
\begin{align}\label{ling0925}
    \begin{aligned}
        \lambda_{0,l}=-l^2\pi^2,~\Phi_{0l}=\begin{pmatrix}
            0\\
            0\\
            \frac{\sqrt{2}}{\sqrt{L}}\sin{l\pi x_{2}}
        \end{pmatrix},
        ~p_{0,l}=-\frac{\sqrt{2}R}{\sqrt{L}l\pi}\cos{l\pi x_{2}},
    \end{aligned}
\end{align}
where $l=1,2,3,\cdots$.

The following lemma shows that the linear operator $\mathbf{L}$ has a complete eigenvector system. That is, for any $\Phi\in \mathbf{E}_{1}\times \widetilde{\mathbf{E}}_{2}$, $\Phi$ can be expressed by the eigenvectors of $\mathbf{L}$.
\begin{lemma}\label{wanbei0205}
    All eigenvectors $\left\{\Phi_{k,l}^{1},\Phi_{k,l}^{2},\Phi_{0,l}\right\}$, where $k,l=1,2,\cdots,$ ,of $\mathbf{L}$ forms a basis for the space $\mathbf{E}_{1}\times \widetilde{\mathbf{E}}_{2}$.
\end{lemma}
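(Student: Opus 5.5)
The plan is to reduce completeness of $\{\Phi_{k,l}^1,\Phi_{k,l}^2,\Phi_{0,l}\}$ to the already known completeness of $\{\mathbf{e}_{kl}\}$ in $\mathbf{E}_1$ and of $\{e_{kl}\}$ for the temperature component. The linear operator $\mathbf{L}$ leaves each Fourier block invariant, so the question becomes two-dimensional block by block.

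First, take $\Phi=(\mathbf{u},\theta)\in\mathbf{E}_1\times\widetilde{\mathbf{E}}_2$ and expand it as in \eqref{jishu0205}: $\mathbf{u}=\sum_{k,l\ge1}u_{kl}\mathbf{e}_{kl}$ and $\theta=\sum_{k\ge0,l\ge1}\theta_{kl}e_{kl}$. The blocks group naturally.
\begin{itemize}
\item For $k=0$ there is only the temperature mode $e_{0l}$, which is a multiple of $\Phi_{0,l}$ by \eqref{ling0925}.
\item For $k\ge1$, the pair $(\mathbf{e}_{kl},0)$, $(0,e_{kl})$ spans a two-dimensional space $V_{kl}$.
\end{itemize}
By \eqref{tehzenxiangliang0814}, both $\Phi_{k,l}^1$ and $\Phi_{k,l}^2$ lie in $V_{kl}$: up to normalization, $\Phi_{k,l}^j=(u_{k,l}^j\mathbf{e}_{kl},e_{kl})$.

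Second, show that $\Phi^1_{k,l},\Phi^2_{k,l}$ are linearly independent, i.e.\ span $V_{kl}$. This is equivalent to $u^1_{k,l}\neq u^2_{k,l}$. Since $u^j_{k,l}=-(\lambda^j_{kl}+r_{kl}^2)L/(Rk\pi)$, it follows from $\lambda^1_{kl}-\lambda^2_{kl}=\sqrt{\Delta_{kl}}>0$. So for each block we solve a $2\times2$ system,
\[
u_{kl}\mathbf{e}_{kl}+\theta_{kl}e_{kl}=c^1_{kl}\Phi^1_{kl}+c^2_{kl}\Phi^2_{kl}.
\]
The coefficients $c^j_{kl}$ are given explicitly by Cramer's rule, with denominators proportional to $u^1_{kl}-u^2_{kl}$, and the algebraic decomposition is immediate.

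The main obstacle is convergence, since the eigenvectors are not orthogonal. I will show the family is a Riesz basis, i.e.\ that $\sum|c^j_{kl}|^2$ is comparable to the squared norm of $\Phi$ uniformly in $(k,l)$. Two ingredients are needed.

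\begin{enumerate}
\item Uniform conditioning of the blocks. The Gram matrix of the normalized pair $\Phi^1_{kl},\Phi^2_{kl}$ (in the norm used on each block) must have eigenvalues bounded away from $0$ and $\infty$ uniformly, i.e.\ the angle between $\Phi^1_{kl}$ and $\Phi^2_{kl}$ stays away from $0$. To check this, I will use the asymptotics of $\lambda^{1,2}_{kl}$ as $r_{kl}\to\infty$: $\lambda^1_{kl}\to -V_a$ and $\lambda^2_{kl}\approx -r_{kl}^2$. Then $u^2_{kl}r_{kl}$ stays bounded while $u^1_{kl}r_{kl}\sim r_{kl}^2 L/(Rk\pi)$ grows. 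The first eigenvector therefore becomes essentially a velocity mode and the second a temperature mode, so they are asymptotically orthogonal.
\item Orthogonality across blocks. Distinct blocks are orthogonal in $L^2$ because the $\mathbf{e}_{kl}$ and the $e_{kl}$ are orthogonal families. With the uniform Gram bounds and Parseval's identity, the series converges in $\mathbf{H}$.
\end{enumerate}

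For the $\widetilde{\mathbf{E}}_2$ component, convergence in the stronger norm follows from $\theta=\sum_{kl}(c^1_{kl}+c^2_{kl})N_{kl}e_{kl}$, where $N_{kl}$ is the normalization factor. Its coefficients coincide with those of $\theta$, which converge in $H^2$ because $e_{kl}$ are eigenfunctions of $\Delta$ with the boundary conditions \eqref{fang20205}.

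Uniqueness of the coefficients follows from block orthogonality together with independence within each block, giving the basis property.
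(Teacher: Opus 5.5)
Your proposal is correct and uses the same idea as the paper's proof: regroup the expansion \eqref{jishu0205} into the blocks $V_{kl}$ and the $k=0$ modes, then rewrite each block in terms of $\Phi^1_{k,l},\Phi^2_{k,l}$. The paper stops at this algebraic step (``through a direct computation''). It never checks that $\Phi^1_{k,l},\Phi^2_{k,l}$ are independent, nor that the resulting non-orthogonal expansion converges. Your independence argument via $\sqrt{\Delta_{kl}}>0$ and your uniform-conditioning step are exactly what turns that computation into a genuine basis statement.

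The conditioning step can be done exactly rather than asymptotically. The product $(\lambda^1_{k,l}+r_{kl}^2)(\lambda^2_{k,l}+r_{kl}^2)$ is the left-hand side of \eqref{danshi0814} evaluated at $\lambda=-r_{kl}^2$, namely $-V_aR^2k^2\pi^2/(L^2r_{kl}^2)$. Hence $u^1_{k,l}u^2_{k,l}r_{kl}^2=-V_a$, and by $(1+a^2)(1+b^2)\ge(1+|ab|)^2$,
\[
\bigl|\langle\Phi^1_{k,l},\Phi^2_{k,l}\rangle\bigr|
=\frac{|1-V_a|}{\sqrt{\bigl(1+(u^1_{k,l}r_{kl})^2\bigr)\bigl(1+(u^2_{k,l}r_{kl})^2\bigr)}}
\le\frac{|1-V_a|}{1+V_a}<1
\]
for every $(k,l)$. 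Equivalently, $\mathbf{L}$ is symmetric for the equivalent inner product $V_a^{-1}(\mathbf{u},\mathbf{v})+(\theta,\phi)$. The eigenvectors are orthogonal in that inner product, so the Riesz property in $\mathbf{H}$ is immediate.

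This bound also removes the need to handle the finitely many low blocks separately, which your asymptotic argument leaves implicit. It corrects two harmless slips as well:
\begin{itemize}
\item $u^1_{k,l}r_{kl}$ grows like $r_{kl}^3/k$, not $r_{kl}^2/k$;
\item $u^2_{k,l}r_{kl}=-V_a/(u^1_{k,l}r_{kl})$ actually tends to $0$, not merely stays bounded.
\end{itemize}

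Note also that $\Phi^1_{k,l}$ and $\Phi^2_{k,l}$ have different normalizing factors, so the temperature coefficient is $c^1_{kl}N^1_{kl}+c^2_{kl}N^2_{kl}$. Your $H^2$ argument is unaffected, since it only uses that each block sum reproduces $\theta_{kl}e_{kl}$.

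Finally, suppose ``basis'' is meant in the $L^2\times H^2$ norm with term-by-term rather than block partial sums. Then add the observation that $c^j_{kl}\Phi^j_{kl}\to0$ in that norm. This follows from $|u^1_{k,l}r_{kl}|\ge(r_{kl}^2-V_a)/R$ and $u^2_{k,l}r_{kl}\to0$.
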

\begin{proof}
    From \eqref{jishu0205}, 
    $$
    \Phi=\begin{pmatrix}
        \sum\limits_{k,l=1}^{+\infty}u_{kl}\mathbf{e}_{kl}
        \\
\sum\limits_{k=0,l=1}^{+\infty}\theta_{kl}e_{kl}
    \end{pmatrix},
    $$
    which can be expressed by $\left\{\Phi_{k,l}^{1},\Phi_{k,l}^{2},\Phi_{0,l}\right\}$, where $k,l=1,2,\cdots$, through a direct computation. 

    This completes the proof.
\end{proof}
\begin{remark}\label{fangbian0205}
    Since the eigenvectors of $\Phi_{k,l}^{1}$ and $\Phi_{k,l}^2$, where $k,l=1,2,\cdots$,  are not necessarily orthogonal, we additionally compute the following two vectors:
    \begin{align*}
        \widetilde{\Phi}_{k,l}^{1}=\begin{pmatrix}
           \frac{2\sqrt{\left[1+\left(u_{k,l}^1\right)^2\gamma_{kl}^2\right]L}}{l\pi L\left(u_{k,l}^1-u_{k,l}^2\right)}\sin{\frac{k\pi x_{1}}{L}}\cos{l\pi x_{2}}
           \\
           0
           \\
           \frac{2u_{k,l}^{2}\sqrt{\left[1+\left(u_{k,l}^1\right)^2\gamma_{kl}^2\right]L}}{ L\left(u_{k,l}^2-u_{k,l}^1\right)}\cos{\frac{k\pi x_{1}}{L}}\sin{l\pi x_{2}}
        \end{pmatrix},~
        \widetilde{\Phi}_{k,l}^{2}=\begin{pmatrix}
            \frac{2\sqrt{\left[1+\left(u_{k,l}^2\right)^2\gamma_{kl}^2\right]L}}{l\pi L\left(u_{k,l}^2-u_{k,l}^1\right)}\sin{\frac{k\pi x_{1}}{L}}\cos{l\pi x_{2}}
           \\
           0
           \\
           \frac{2u_{k,l}^{1}\sqrt{\left[1+\left(u_{k,l}^2\right)^2\gamma_{kl}^2\right]L}}{ L\left(u_{k,l}^1-u_{k,l}^2\right)}\cos{\frac{k\pi x_{1}}{L}}\sin{l\pi x_{2}}
        \end{pmatrix}.
    \end{align*}
    One can easily check that 
    \begin{align*}
        \widetilde{\Phi}_{p,q}^{j}\cdot\Phi_{k,l}^{s}=\begin{cases}
            1,~(p,q,j)=(k,l,s),
            \\
            0,~(p,q,j)\neq(k,l,s).
        \end{cases}
    \end{align*}
\end{remark}
\subsection{The principle of exchange of stability}\label{chongyaotiaojian0205}
In this section, we present a sufficient condition for the occurrence of transition, referred to as the principle of exchange of stability. To this end, from $\lambda_{k,l}^{1}$ in \eqref{tezhenzhi0814}  we introduce the critical number 
\begin{align}\label{linjie0926}
R_{c}=\min\limits_{k,l=1,2,\cdots}F(k,l),
\end{align}
where $F(k,l)=L\pi \left(\frac{k}{L^2}+\frac{l^2}{k}\right)$. Obviously, $F(k,l)$ increases with $l$. Thus, 
\begin{align*}
    R_{c}=\min\limits_{k=1,2,\cdots}F(k,1).
\end{align*}
A simple calculation reveals that there exists at least one and at most two critical indices $k_{0}$ satisfying $R_{c}=F(k_{0},1)$. Specifically, these critical indices $k_{0}$ correspond to $[L]$ or $[L]+1$ where $[L]$ denotes the greatest integer less than or equal to $L$ (i.e., the floor function of $L$).

\begin{figure}[htp]
		\begin{minipage}[t]{0.45\linewidth}
			\centering
\includegraphics[width=1.1\linewidth]{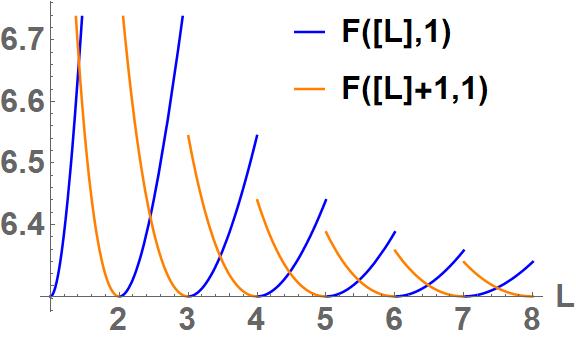}
		\end{minipage}
		\hfill
		\begin{minipage}[t]{0.5\linewidth}
			\centering
\includegraphics[width=0.95\linewidth]{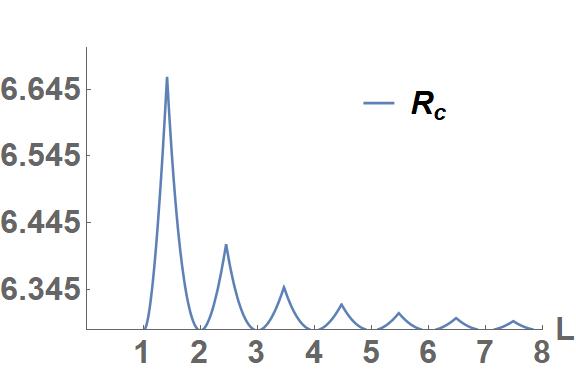}
		\end{minipage}
        \caption{ the figures of $F([L],1)$ and $F([L]+1,1)$ (left), the figure of $R_{c}$(right), as $L\in [1,8]$.}
        \label{zhaolinjiezhi0318}
	\end{figure}
\begin{remark}\label{remark0723}
  A direct computation shows that the first eigenvalue of the linear operator $\mathbf{L}$
  is double if and only if
  
   $$L=\sqrt{n(n+1)},~\text{for}~n\in\mathbf{N}^{*}.$$
   In this case, we have
    $$
    R_{c}=F(n,1)=F(n+1,1)=\sqrt{n(n+1)}\pi \left(\frac{1}{n}+\frac{1}{n+1}\right).
    $$
    Consequently, the set of $\mathbf{L}$ for which the first eigenvalue is double is given by 
    $$
    \mathcal{S}=\left\{\sqrt{n\left(n+1\right)}\bigg{|}n\in\mathbf{N}^{*}\right\},
    $$
    is a set of measure zero in $R^{+}$.
\end{remark}

\begin{lemma}\label{tadepu0925}
For \(R\) in the vicinity of \(R_{c}\), we have 
\begin{enumerate}
    \item[\rm(1)] when \(0<R<R_{c}\), 
    all eigenvalues of \(\mathbf{L}\) are negative; 
    \item[\rm(2)] when \(R=R_{c}\), 
    there is one or two zero eigenvalues of \(\mathbf{L}\);
    \item[\rm(3)] when \(R>R_{c}\), there is one or two positive eigenvalues of \(\mathbf{L}\);
    \item[\rm(4)] when \(k\) or \(l\) \(\rightarrow+\infty\), we have
    \(\lambda^{1}_{k,l}\rightarrow-V_{a}\), \(\lambda^{2}_{k,l}\rightarrow-\infty\) and as \(l\rightarrow+\infty\), we have \(\lambda_{0,l}\rightarrow-\infty\).
\end{enumerate}
\end{lemma}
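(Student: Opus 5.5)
The plan is to read off the sign of every eigenvalue from the explicit formulas \eqref{tezhenzhi0814} and \eqref{ling0925}, using Lemma \ref{wanbei0205} to know that these are all the eigenvalues. First, the family $\lambda_{0,l}=-l^2\pi^2<0$ is independent of $R$, so it never contributes a nonnegative eigenvalue. Second, for $k\ge1$ the quadratic \eqref{danshi0814} has real roots because $\Delta_{kl}>0$. Their sum is $-(V_a+r_{kl}^2)<0$, so $\lambda^2_{k,l}<0$ always. The sign of $\lambda^1_{k,l}$ is then the sign of $-(\text{product})$, where
\begin{align*}
\lambda^1_{k,l}\lambda^2_{k,l}=V_a r_{kl}^2-\frac{V_aR^2k^2\pi^2}{L^2r_{kl}^2}=\frac{V_a k^2\pi^2}{L^2 r_{kl}^2}\Bigl(F(k,l)^2-R^2\Bigr).
\end{align*}
To get the last form, note that $r_{kl}^4L^2/(k^2\pi^2)=\bigl(\pi(k^2/L^2+l^2)L/k\bigr)^2=F(k,l)^2$. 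Hence $\lambda^1_{k,l}<0$, $=0$, or $>0$ according as $R<F(k,l)$, $R=F(k,l)$, or $R>F(k,l)$.

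Items (1)–(3) then follow from the definition \eqref{linjie0926} of $R_c$. If $R<R_c$, then $R<F(k,l)$ for all $k,l$, so every eigenvalue is negative. If $R=R_c$, then $\lambda^1_{k,l}=0$ exactly for the minimizing indices $(k_0,1)$; by the discussion after \eqref{linjie0926} there are one or two of these, $k_0\in\{[L],[L]+1\}$, and all other eigenvalues are negative. For (3), with $R$ slightly above $R_c$, we need $F(k,l)>R$ for all non-minimizing $(k,l)$. This holds because $F(k,l)\to\infty$ as $k\to\infty$ or $l\to\infty$, so the set of values $\{F(k,l)\}$ is discrete. Its second smallest value therefore exceeds $R_c$ by a positive gap $\delta$. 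For $R_c<R<R_c+\delta$, exactly the one or two minimizing modes have positive eigenvalues.

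For item (4), write $\lambda^{1,2}_{k,l}=\tfrac12\bigl(-(V_a+r^2)\pm\sqrt{\Delta_{kl}}\bigr)$ with $r^2=r_{kl}^2\to\infty$. The term $4V_aR^2k^2\pi^2/(L^2r^2)$ stays bounded by $4V_aR^2$, since $k^2\pi^2/L^2\le r^2$. Then
\begin{align*}
\sqrt{\Delta_{kl}}=(r^2-V_a)\sqrt{1+O\bigl((r^2-V_a)^{-2}\bigr)}=r^2-V_a+O(r^{-2}).
\end{align*}
This gives $\lambda^1_{k,l}=-V_a+O(r^{-2})\to-V_a$ and $\lambda^2_{k,l}=-r^2+O(r^{-2})\to-\infty$. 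The limit $\lambda_{0,l}\to-\infty$ is immediate.

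The only step needing care is the uniform spectral gap in (3), which relies on the discreteness of $\{F(k,l)\}$ and the coercivity of $F$. That coercivity is clear because $F(k,l)\ge L\pi k/L^2$ and $F(k,l)\ge L\pi l^2/k\ge\pi l^2$ for $k\le L$, while for large $k$ the first bound already dominates.
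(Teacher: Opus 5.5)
Your proposal is correct and follows the same route as the paper, whose proof only asserts that (1)--(4) follow from the explicit formulas \eqref{tezhenzhi0814}, \eqref{ling0925} and \eqref{linjie0926}. Your identity $\lambda^1_{k,l}\lambda^2_{k,l}=\frac{V_ak^2\pi^2}{L^2r_{kl}^2}\bigl(F(k,l)^2-R^2\bigr)$, the fact that $\lambda^2_{k,l}<0$, and the spectral-gap argument for (3) supply exactly the details the paper leaves implicit; the one loosely phrased step is coercivity, which is cleaner stated as $F(k,l)\le M\Rightarrow k\le ML/\pi$ and then $l^2\le Mk/(L\pi)$ (or via $F(k,l)\ge 2\pi l$ by AM--GM).
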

\begin{proof}
    The first three conclusions can be directly derived by analyzing expressions \eqref{tezhenzhi0814} and \eqref{linjie0926}. Let $k$ or $l$ go to infinity in $\lambda_{k,l}^1$, $\lambda_{k,l}^{2}$ and $\lambda_{0,l}$, we can obtain the last conclusion.
\end{proof}

Lemma \ref{tadepu0925} says that as 
\(R\) passes through \(R_{c}\), there exist finite eigenvalues changing signs. Therefore, we call these eigenvalues the first eigenvalues and denote the corresponding critical index by 
\begin{align}\label{linjiezhibiaoji0709}
B=\left\{\left(k_{0},1\right)|\left(k_{0},1\right)=\left(k_{1},1\right),\cdots, 
\left(k_{m},1\right)\right\}.
\end{align}
When \(R\) is close to \(R_{c}\)
, one can see that \(m=1\) or \(2\), that is, Card(\(B\))\(=1\) or \(2\). Furthermore, the first eigenvectors are denoted by \(\left\{\Phi_{k_{1},1}^{1},\cdots,\Phi_{k_{m},1}^{1}\right\}\). In addition, we readily deduce that there exists a number $\gamma_{0}>0$ such that 
\begin{align}\label{zuida1212}
\lambda_{k,l}^{1},\lambda_{k,l}^{2},\lambda_{0,l}\leq-\gamma_{0}
\end{align}
for $\lambda_{k,l}^{1}\neq \lambda_{k_{0},1}^{1}$.
\section{The well-posedness to system \eqref{model}–\eqref{bianjian0814}}\label{jie0205}
This section investigates several properties of the solutions to the equation, which form the foundation for the existence of locally invariant manifolds of the system \eqref{model}–\eqref{bianjian0814}. The first subsection presents the existence theorem of solutions without proof; the second subsection demonstrates that the linear operator $\mathbf{L}$ can generate a strongly continuous semigroup and provides its specific action form; the third subsection improves the regularity of solutions by virtue of the operator semigroup, where the strategy for regularity improvement is implemented separately for the velocity field and the temperature field.
\subsection{The existence and uniqueness of weak solution}\label{diyibu0205}
We first give the definition of weak solution to the system \eqref{model}–\eqref{bianjian0814} as follows,
\begin{definition}
    $\Phi=\left(\mathbf{u},\theta\right),$ where $\mathbf{u}\in L^{2}\left(t_{0},T;\mathbf{E}_{1}\right)$, $\theta\in L^{2}\left(t_{0},T;\mathbf{E}_{2}\right)$ and $T>t_{0}$ is a arbitrary constant, is the weak solution to the system \eqref{model}–\eqref{bianjian0814}, if $\forall\widetilde{\Phi}=\left(\widetilde{\mathbf{u}},\widetilde{\theta}\right)$, where $\widetilde{\mathbf{u}}=\left(\widetilde{u}_{1},\widetilde{u}_{2}\right)\in C^{1}\left(t_{0},T;\mathcal{D}_{1}\right)$, $\widetilde{\theta}\in C^{1}\left(t_{0},T;\mathcal{D}_{2}\right)$ and $\widetilde{\Phi}\left(T\right)=0$, we have 
    \begin{align*}
        \begin{aligned}
            -&\int_{\Omega}\Phi_{0}\cdot\widetilde{\Phi}\left(t_{0}\right)dx-\int_{t_{0}}^{T}\int_{\Omega}\Phi\cdot\partial_{t}\widetilde{\Phi}dxdt
            -\int_{t_{0}}^{T}\int_{\Omega}\mathbf{u}\cdot\nabla\widetilde{\theta}\theta dxdt
            \\
            &=
           \int_{t_{0}}^{T}\int_{\Omega}\left[
           -V_{a}\mathbf{u}\cdot\widetilde{\mathbf{u}}+V_{a}R\theta\widetilde{u}_{2}-\nabla\theta\cdot\nabla\widetilde{\theta}+Ru_{2}\widetilde{\theta}\right]dxdt,
        \end{aligned}
    \end{align*}
    where $\Phi_{0}=\left(\mathbf{u}_{0},\theta_{0}\right)$ is the initial value.
\end{definition}

Using the Galerkin approximation method, we can obtain that 
\begin{lemma}\label{shidingxing0111} For any \(\mathbf{u}_{0}\in \widetilde{\mathbf{E}}_{1}\) and 
\(\theta_{0}\in \mathbf{E}_{2}\), then the weak solution \(\left(\mathbf{u},\theta\right)\) to the system \eqref{model}-\eqref{bianjian0814} with the initial value \(\left(\mathbf{u}_{0},\theta_{0}\right)\) is unique and have the following properties, 
\begin{align}\label{xingzhi0111}
\begin{aligned}
&~~~~~~~~~~~~~~\mathbf{u}\in L^{\infty}\left(t_0,T;\widetilde{\mathbf{E}}_{1}\right),~
\partial_{t}\mathbf{u}\in L^{2}\left(t_0,T;\mathbf{E}_{1}\left(\Omega\right)\right),
\\
&\theta\in L^{\infty}\left(t_0,T; H^{1}\left(\Omega\right)\right)
\cap L^{2}\left(t_0,T;H^{2}\left(\Omega\right)\right),~\partial_{t}\theta \in L^{2}\left(t_0,T;L^{2}\left(\Omega\right)\right).
\end{aligned}
\end{align}
\end{lemma}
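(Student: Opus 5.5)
We construct the solution by a Faedo--Galerkin scheme in the eigenbases \eqref{diderzu0814} and \eqref{tezhenxiangliang0926}, derive uniform bounds, pass to the limit by compactness, and then prove uniqueness. For each $N$ we seek
$$
\mathbf{u}^N=\sum_{k,l\le N}u_{kl}(t)\mathbf{e}_{kl},\qquad \theta^N=\sum_{k\le N,\,l\le N}\theta_{kl}(t)e_{kl},
$$
solving the projected system: $\frac{1}{V_a}\partial_t\mathbf{u}^N=-\mathbf{u}^N+R\,P_N\mathbf{P}(\theta^N\mathbf{e}_2)$ and $\partial_t\theta^N+Q_N(\mathbf{u}^N\cdot\nabla\theta^N)=Ru^N_2+\Delta\theta^N$, with projected initial data. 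This is a finite-dimensional ODE with quadratic nonlinearity, so it has a local solution; the a priori bounds below make it global on $[t_0,T]$. Because the bases are eigenfunctions of $\mathbf{P}\Delta$ and $\Delta$, the projections commute with these operators, and we may test with $\Delta\mathbf{u}^N$ and $\Delta\theta^N$.

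The a priori estimates proceed in three layers.
\begin{itemize}
\item[(i)] $L^2$ level. Test the $\mathbf{u}$-equation by $\mathbf{u}^N$ and the $\theta$-equation by $\theta^N$. The convection term vanishes since $\int(\mathbf{u}\cdot\nabla\theta)\theta=0$ when $\nabla\cdot\mathbf{u}=0$ and $\mathbf{u}\cdot\mathbf{n}=0$. The linear coupling terms $R\theta u_2$ are absorbed by Young's inequality, and Gronwall gives $\mathbf{u}^N\in L^\infty L^2$ and $\theta^N\in L^\infty L^2\cap L^2H^1$.
\item[(ii)] $H^1$ level for $\mathbf{u}$. Test the momentum equation by $-\mathbf{P}\Delta\mathbf{u}^N$. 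This gives
$$
\frac{1}{2V_a}\frac{d}{dt}\|\nabla\mathbf{u}^N\|^2+\|\nabla\mathbf{u}^N\|^2\le R\|\nabla\theta^N\|\|\nabla\mathbf{u}^N\|,
$$
which is linear in $\theta$.
\item[(iii)] $H^1$ level for $\theta$. Test the temperature equation by $-\Delta\theta^N$. The difficult term is $\int(\mathbf{u}\cdot\nabla\theta)\Delta\theta$. In 2D, by H\"older, Ladyzhenskaya and Agmon/Gagliardo--Nirenberg,
$$
\Big|\int(\mathbf{u}\cdot\nabla\theta)\Delta\theta\Big|\le\|\mathbf{u}\|_{L^4}\|\nabla\theta\|_{L^4}\|\Delta\theta\|\le C\|\mathbf{u}\|_{H^1}\|\nabla\theta\|^{1/2}\|\Delta\theta\|^{3/2},
$$
so Young's inequality yields $\tfrac12\|\Delta\theta\|^2+C\|\mathbf{u}\|_{H^1}^4\|\nabla\theta\|^2$.
\end{itemize}
Summing (ii) and (iii) gives, with $Y=\|\nabla\mathbf{u}^N\|^2+\|\nabla\theta^N\|^2$,
$$
Y'+\|\Delta\theta^N\|^2\le C\,Y+C\,Y^3.
$$
This closes only locally in time, which is the main obstacle: we need a bound on the whole of $[t_0,T]$. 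The plan is to exploit the fact that $\mathbf{u}$ is slaved linearly to $\theta$. Integrating (ii) as a linear ODE gives
$$
\|\nabla\mathbf{u}(t)\|\le e^{-V_a(t-t_0)}\|\nabla\mathbf{u}_0\|+RV_a\int_{t_0}^t e^{-V_a(t-s)}\|\nabla\theta(s)\|\,ds.
$$
Hence $\|\mathbf{u}\|_{L^\infty H^1}$ is controlled by $\|\theta\|_{L^\infty H^1}$, and by the (i) bound $\|\nabla\mathbf{u}\|_{L^2_t}$ is also controlled via $\|\nabla\theta\|_{L^2_t}$. To avoid the cubic growth, I would use a sharper estimate of the convection term: write $\mathbf{u}\cdot\nabla\theta$ and use $\|\nabla\theta\|_{L^4}\le C\|\theta\|^{1/4}\|\Delta\theta\|^{3/4}$ together with the $L^2$ bound on $\theta$ from (i), which may improve the power. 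An alternative is an $L^p$ maximum-principle bound on $\theta$: the temperature equation with the source $Ru_2$ gives $\theta\in L^\infty L^4$ via $L^4$ testing, where the source needs only $\mathbf{u}\in L^4$, controlled by the $H^1$ bound of $\mathbf{u}$ in terms of $\theta$. Then estimate $\int(\mathbf{u}\cdot\nabla\theta)\Delta\theta=-\int\nabla\mathbf{u}:\nabla\theta\otimes\nabla\theta$ plus boundary terms, obtaining $\|\nabla\mathbf{u}\|\|\nabla\theta\|_{L^4}^2\le C\|\nabla\mathbf{u}\|\|\nabla\theta\|\|\Delta\theta\|$. After Young this gives $Y'\le C(1+\|\nabla\mathbf{u}\|^2)Y$, and since $\|\nabla\mathbf{u}\|^2\in L^1_t$, Gronwall closes globally. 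The boundary terms vanish because of the slip conditions encoded in $\mathcal{D}_1,\mathcal{D}_2$. The bounds on $\partial_t\mathbf{u}$ and $\partial_t\theta$ in $L^2L^2$ then follow by reading them off the equations.

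Finally, we pass to the limit. Uniform bounds give weak-* convergence in the spaces of \eqref{xingzhi0111}. Aubin--Lions gives $\theta^N\to\theta$ strongly in $L^2H^1$, and $\mathbf{u}^N\to\mathbf{u}$ weakly in $L^2L^2$, and this suffices to pass to the limit in the term $\int\mathbf{u}\cdot\nabla\widetilde\theta\,\theta$ of the weak formulation. For uniqueness, take two solutions and their difference $(\mathbf{w},\eta)$, and test by $(\mathbf{w},\eta)$. The only nonlinear contribution is $\int(\mathbf{w}\cdot\nabla\theta_2)\eta$, bounded by $\|\mathbf{w}\|\|\nabla\theta_2\|_{L^4}\|\eta\|_{L^4}\le \|\mathbf{w}\|\|\nabla\theta_2\|_{L^4}\|\eta\|^{1/2}\|\nabla\eta\|^{1/2}$. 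Since $\|\nabla\theta_2\|_{L^4}^2\le C\|\theta_2\|_{H^1}\|\theta_2\|_{H^2}\in L^1_t$, Young's inequality and Gronwall show the difference is zero.
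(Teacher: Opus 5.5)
Your proposal is correct and follows essentially the same route as the paper. Existence is by Faedo--Galerkin, which the paper declares standard and omits, so your estimates (i)--(iii) and the Aubin--Lions passage supply exactly what it leaves out; uniqueness is by the $L^{2}$ energy estimate for the difference plus Gronwall, using the time-integrability of $\|\nabla\widetilde{\theta}\|_{L^{4}}^{2}$, essentially as in \eqref{gujiyixia0930}.

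One simplification: the global-in-time obstacle you identify is not real. Integrating (ii) and using $\|\nabla\theta^{N}\|\in L^{2}(t_0,T)$ from (i) already gives $\sup_{t}\|\nabla\mathbf{u}^{N}\|^{2}\le\|\nabla\mathbf{u}_{0}\|^{2}+CV_{a}R^{2}\int_{t_0}^{T}\|\nabla\theta^{N}\|^{2}\,dt$. After that, even your first estimate in (iii) closes globally by Gronwall, so the sharper-interpolation and $L^{p}$ maximum-principle detours are unnecessary, although your integration-by-parts closure is also valid.
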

\begin{proof}
The Galerkin approximation method is standard, thus we omit it. We just give the proof of uniqueness. Assume that $\left(\mathbf{u},\theta\right)$ and $\left(\widetilde{\mathbf{u}},\widetilde{\theta}\right)$ are two solutions to the system \eqref{model}-\eqref{bianjian0814} with the same initial value $\left(\mathbf{u}_{0},\theta_{0}\right)\in \widetilde{\mathbf{E}}_{1}\times \mathbf{E}_{2}$. Let $\mathbf{U}=\mathbf{u}-\widetilde{\mathbf{u}}=\left(U_{1},U_{2}\right)$ and $\Theta=\theta-\widetilde{\theta}$, then by standard energy method, the following estimate is obtained 
\begin{align}\label{gujiyixia0930}
\begin{aligned}
\frac{1}{2}\frac{d}{dt}\left[\left\|\Theta\right\|_{L^2\left(\Omega\right)}^2+\left\|\mathbf{U}\right\|_{L^2\left(\Omega\right)}^2\right]&
+V_{a}\left\|\mathbf{U}\right\|_{L^2\left(\Omega\right)}^2
+\left\|\nabla\Theta\right\|_{L^2\left(\Omega\right)}^2
\\
&=\left(V_{a}R+R\right)\int_{\Omega}U_{2}\Theta dx+\int_{\Omega}\mathbf{U}\cdot\nabla\widetilde{\theta}\Theta dx
\\
&\leq M(t)\left[\left\|\Theta\right\|_{L^2\left(\Omega\right)}^2+\left\|\mathbf{U}\right\|_{L^2\left(\Omega\right)}^2\right]+\frac{1}{2}\left\|\nabla\Theta\right\|_{L^2\left(\Omega\right)}^2,
\end{aligned}
\end{align}
where $M(t)=\frac{V_{a}R+R}{2}+\frac{1}{2}\left\|\nabla\widetilde{\theta}\right\|_{L^4\left(\Omega\right)}^2$ is integrable. With Gronwall's inequality and $\left(\mathbf{U},\Theta\right)|_{t=t_{0}}=\left(\mathbf{0},0\right)$, it follows that 
\begin{align*}
\left\|\Theta\right\|_{L^2\left(\Omega\right)}^2+\left\|\mathbf{U}\right\|_{L^2\left(\Omega\right)}^2=0,
\end{align*}
which implies the uniqueness.
\end{proof}
\begin{remark}
    Furthermore, if $\theta_{0}\in \widetilde{\mathbf{E}}_{2}\left(\Omega\right)$, the above weak solution $\Phi=\left(\mathbf{u},\theta\right)$ satisfies the following properties,
    \begin{align*}
        \begin{aligned}
            &\partial_{t}\Phi\in L^{\infty}\left(t_{0},T;\left[H^{1}\left(\Omega\right)\right]^{3}\right);~\partial_{t}\theta\in L^{2}\left(t_{0},T;H^{1}\left(\Omega\right)\right);
            \\
            &\theta\in L^{2}\left(t_{0},T;W^{2,q}\left(\Omega\right)\right)\cap L^{\infty}\left(t_{0},T;H^{2}\left(\Omega\right)\right),~1\leq q<\infty.
        \end{aligned}
    \end{align*}
\end{remark}

\subsection{The strong semigroup generated by $\mathbf{L}$}\label{banqun0205}

In order to obtain the semigroup $T(t)$ generated by $\mathbf{L}$, we consider the following linear problem 
\begin{align}\label{xianxingsuanzi0928}
\begin{cases}
\frac{d\Phi}{dt}=\mathbf{L}\Phi,
\\
\Phi\left(t_{0}\right)=\Phi_{0}=\left(
\mathbf{u}_{0},\theta_{0}\right)\in\mathbf{H}.
\end{cases}
\end{align}
From previous discussion, we conclude that 
$$\Phi_{0}=\sum\limits_{k,l\in\mathbf{N}^{*}}
\left[
C^{1}_{k,l}\left(0\right)\Phi^{1}_{k,l}+C^{2}_{k,l}\left(0\right)\Phi^{2}_{k,l}+C_{0,l}\left(0\right)\Phi_{0,l}
\right],$$ 
where $C_{k,l}^{i}(0)=\int_{\Omega}\Phi_{0}\cdot \widetilde{\Phi}_{k,l}^{i}dx$ ($i=1,2$) and $C_{0,l}(0)=\int_{\Omega}\Phi_{0}\cdot\widetilde{\Phi}_{0,l}dx$. Then, the above linearized problem can be transformed as ODEs 
\begin{align*}
\begin{aligned}
\frac{dC^{1}_{k,l}}{dt}=\lambda^{1}_{k,l}C^{1}_{k,l},~
\frac{dC^{2}_{k,l}}{dt}=\lambda^{2}_{k,l}C^{2}_{k,l},~
\frac{dC_{0,l}}{dt}=\lambda_{0,l}C_{0,l}.
\end{aligned}
\end{align*}
Therefore, the solution to linearized problem is 
\begin{align}\label{zuoyongfangshi0709}
T\left(t\right)\Phi_{0}=
\sum\limits_{k,
l\in\mathbf{N}^{*}}
\left[
C^{1}_{k,l}\left(0\right)e^{\lambda^{1}_{k,l}t}\Phi^{1}_{k,l}+C^{2}_{k,l}\left(0\right)e^{\lambda^{2}_{k,l}t}\Phi^{2}_{k,l}+C_{0,l}\left(0\right)e^{\lambda_{0,l}t}\Phi_{0,l}
\right]\in\mathbf{H}.
\end{align}
Then according to the definition of strongly continuous semigroup of operators \cite{Pazy1983} , it is easy to verify that $T(t):\mathbf{H}\rightarrow\mathbf{H}$ is a strongly continuous semigroup of operators generated by $\mathbf{L}$. From \cite{ma2011} and \eqref{xingzhi0111}, we conclude that the solution $\Phi=\left(\mathbf{u},\theta\right)$ to the system \eqref{model}-\eqref{bianjian0814} with initial value $\Phi_{0}\in \widetilde{\mathbf{E}}_{1}\times \mathbf{E}_{2}$ can be expressed in the form 
\begin{align}\label{anxin0930}
\Phi\left(t\right)=T\left(t-t_{0}\right)\Phi\left(t_{0}\right)+\int_{t_{0}}^{t}T\left(t-\tau\right)\mathbf{G}\left(\Phi\left(\tau\right)\right)d\tau.
\end{align}

\subsection{Regularity of solution}
From Lemma \ref{shidingxing0111}, there exists a weak solution $\left(\mathbf{u},\theta\right)$ to the system \eqref{model}-\eqref{bianjian0814} with initial value  \(\left(\mathbf{u}_{0},\theta_{0}\right)\in \widetilde{\mathbf{E}}_{1}\times\mathbf{E}_{2}\). 
This subsection is devoted to improving the regularity of the solution $\left(\mathbf{u},\theta\right)$ to the system \eqref{model}-\eqref{bianjian0814} with the initial value \(\left(\mathbf{u}_{0},\theta_{0}\right)\in \widetilde{\mathbf{E}}_{1}\times\mathbf{E}_{2}\) through the semigroup of operators. For any $\alpha\in\mathbf{R}$, we define the following fractional Sobolev spaces,
\begin{align}\label{chouxiang0928}
\begin{cases}
\mathbf{H}_{\alpha}=\left\{\mathbf{u}\in [L^2\left(\Omega\right)]^2|\mathbf{u}=\sum\limits_{k,l=1}^{+\infty}\mathbf{u}_{kl}\widetilde{\mathbf{e}}_{kl}~\text{and}~\sum\limits_{k,l=1}^{+\infty}\gamma_{kl}^{2\alpha}|\mathbf{u}_{kl}|^2<+\infty\right\},
\\
H_{\alpha}=\left\{\theta\in L^2\left(\Omega\right)|\theta=\sum\limits_{k=0,l=1}^{+\infty}\theta_{kl}\widetilde{e}_{kl}~\text{and}~\sum\limits_{k,l=1}^{+\infty}\gamma_{kl}^{2\alpha}|\theta_{kl}|^2<+\infty\right\},
\end{cases}
\end{align}
where $\widetilde{\mathbf{e}}_{kl}=\frac{2}{\gamma_{kl}}\mathbf{e}_{kl}$, 
$\widetilde{e}_{kl}=\frac{2}{\gamma_{kl}}e_{kl}$, 
$\widetilde{e}_{0l}=\sqrt{\frac{2}{L}}e_{0l}$ and the definitions of $e_{kl}$ and $\mathbf{e}_{kl}$ can be found in \eqref{tezhenxiangliang0926} and \eqref{diderzu0814}. Then, one can see
\begin{align}\label{fenshuci0928}
\begin{cases}
\mathbf{H}_{0}=\left\{\mathbf{u}=\left(u_{1},u_{2}\right)\in \left[L^2\left(\Omega\right)\right]^2\right\},~H_{0}=\left\{\theta\in L^2\left(\Omega\right)\right\},
\\
\mathbf{H}_{\frac{1}{2}}=\left\{\mathbf{u}=\left(u_{1},u_{2}\right)\in \left[H^1\left(\Omega\right)\right]^2\right\},~H_{\frac{1}{2}}=\left\{\theta\in H^1\left(\Omega\right)\right\},
\\
\mathbf{H}_{1}=\left\{\mathbf{u}=\left(u_{1},u_{2}\right)\in \left[H^2\left(\Omega\right)\right]^2\right\},~H_{1}=\left\{\theta\in H^2\left(\Omega\right)\right\}.
\end{cases}
\end{align}

We adopt the following strategy to improve the regularity of the solution: first, we consider the equations of the velocity field and the temperature field separately, and confirm that the solution can be expressed in the form of a semigroup of operators. Then, we first improve the regularity of the velocity; after that, we enhance the regularity of the temperature; subsequently, we improve the regularity of the velocity again. We repeat this process and elevate the regularity of the solution to the highest level.

From \cite{Pazy1983,ma2011}, we conclude that for $\eqref{model}_{1,2}$ and $\eqref{model}_{3}$, the solutions $\mathbf{u}$ and $\theta$ can be expressed in form of the semigroup of operators as follows 
\begin{align}\label{jiedebiaoshi0111}
\begin{aligned}
&\mathbf{u}\left(t\right)=e^{-V_{a}\left(t-t_{0}\right)}\mathbf{u}_{0}+V_{a}R\int_{t_0}^{t}
e^{-V_{a}\left(t-\tau\right)}\mathbf{P}\left(\theta\left(\tau\right)\mathbf{e}_{2}\right)d\tau,
\\
&\theta\left(t\right)=
e^{\left(t-t_{0}\right)\Delta }\theta_{0}+\int_{t_0}^{t}
e^{\left(t-\tau\right)\Delta}f\left(\tau\right)d\tau,
\end{aligned}
\end{align}
where  \(f\left(t\right)=Ru_{2}-\mathbf{u}\cdot\nabla\theta\). 

\begin{theorem}\label{dingxia1002}
   If the initial value $\left(\mathbf{u}_{0},\theta_{0}\right)$ belongs to $ \mathbf{H}_{\alpha}\times H_{\frac{1}{2}}$, where $\alpha\in (\frac{1}{2},1)$, then the corresponding solution $\left(\mathbf{u},\theta\right)$ to the system \eqref{model}-\eqref{bianjian0814} has the following properties 
    \begin{align*}
    \begin{aligned}
     \mathbf{u}\in C^{1}\left(t_0,T;\mathbf{H}_{\alpha}\right),~\theta\left(t\right)\in C^{1}\left(t_0,T;H_{\widetilde{\alpha}}\right)\cap C\left(t_0,T;H_{\frac{3}{2}}\right),
    \end{aligned}
    \end{align*}
    where $\frac{1}{2}<\widetilde{\alpha}<\alpha$.
\end{theorem}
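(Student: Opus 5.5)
We plan a bootstrap argument on the two Duhamel representations \eqref{jiedebiaoshi0111}. The velocity is propagated by the non-smoothing semigroup $e^{-V_a t}$, so it can never be more regular than its initial datum and the source $\mathbf{P}(\theta\mathbf{e}_2)$. The temperature is propagated by the analytic semigroup $e^{t\Delta}$, which gains almost two derivatives in time-integrated form. The steps alternate between the two equations. Throughout we use the standard estimate $\|e^{t\Delta}g\|_{H_{\beta}}\le C t^{-(\beta-\delta)}e^{-\pi^2 t}\|g\|_{H_\delta}$ for $\beta\ge\delta$, which is immediate from the spectral representation in the basis $\widetilde{e}_{kl}$. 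We also use the boundedness of $\mathbf{P}$ from $[H_\beta]^2$ into $\mathbf{H}_\beta$ for $\beta<1$, with the compatibility conditions encoded in \eqref{chouxiang0928}.

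\textbf{Step 1: the velocity inherits the temperature regularity, capped by $\alpha$.} Starting from Lemma \ref{shidingxing0111}, we have $\theta\in L^\infty(t_0,T;H_{1/2})\cap L^2(t_0,T;H_1)$. The first line of \eqref{jiedebiaoshi0111} then gives $\mathbf{u}\in C(t_0,T;\mathbf{H}_{\min(\alpha,1/2)})$, because $e^{-V_a(t-t_0)}\mathbf{u}_0\in\mathbf{H}_\alpha$ and the integral term lies in $\mathbf{H}_\beta$ whenever $\theta\in L^1(t_0,T;H_\beta)$. Once $\theta\in C(t_0,T;H_\beta)$ with $\beta\ge\alpha$ is known, we obtain $\mathbf{u}\in C(t_0,T;\mathbf{H}_\alpha)$.

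\textbf{Step 2: upgrade $\theta$ through the forcing $f=Ru_2-\mathbf{u}\cdot\nabla\theta$.} Since $\alpha>1/2$, $\mathbf{H}_\alpha\hookrightarrow L^\infty$ in two dimensions by Sobolev embedding. Hence $\|\mathbf{u}\cdot\nabla\theta\|_{L^2}\le C\|\mathbf{u}\|_{\mathbf{H}_\alpha}\|\theta\|_{H_{1/2}}$, so $f\in L^\infty(t_0,T;H_0)$. The analytic smoothing then gives $\int_{t_0}^t e^{(t-\tau)\Delta}f\,d\tau\in C(t_0,T;H_{1-\epsilon})$. The term $e^{(t-t_0)\Delta}\theta_0$ is smooth for $t>t_0$; we work on compact subintervals of $(t_0,T]$, or use the fact that $H_{1/2}$ is preserved. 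Iterating, we take $\theta\in H_{1-\epsilon}$ and $\mathbf{u}\in\mathbf{H}_\alpha$ and estimate $\mathbf{u}\cdot\nabla\theta$ in $H_\delta$ with $\delta<\alpha-1/2$ using product estimates in fractional Sobolev spaces. For $s_1=\alpha$ and $s_2=1-\epsilon-1/2$ with $s_1+s_2>1$, we get $\mathbf{u}\cdot\nabla\theta\in H^{\delta}$ for $\delta<\min(s_2,\,s_1+s_2-1)$. Then $\theta\in H_{1+\delta/2\cdot 2-\epsilon}$, and another round gives $\theta\in C(H_{3/2})$. The exponent $3/2$ is reached because $f\in H_{1/2-\epsilon'}$ with $1/2-\epsilon'$ measured in the $\gamma^{2\cdot}$ scale, which requires $\alpha>1/2$. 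Here $Ru_2\in\mathbf{H}_\alpha$ is harmless. One must also check that $f$ satisfies the boundary compatibility needed to belong to $H_\delta$ for $\delta\ge 1/4$ (Dirichlet at $x_2=0,1$). This holds because $u_2=0$ there, and $u_1\partial_1\theta=0$ since $\theta=0$ on $x_2=0,1$. This boundary check is where I expect the main technical obstacle: the fractional spaces \eqref{chouxiang0928} are defined spectrally, so the product estimates must be verified in those spaces and not merely in standard $H^s$.

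\textbf{Step 3: time regularity.} For $\mathbf{u}$, the equation gives $\partial_t\mathbf{u}=-V_a\mathbf{u}+V_aR\mathbf{P}(\theta\mathbf{e}_2)\in C(t_0,T;\mathbf{H}_\alpha)$, since $\theta\in C(H_{3/2})\subset C(H_\alpha)$; hence $\mathbf{u}\in C^1(\mathbf{H}_\alpha)$. For $\theta$, we have $\partial_t\theta=\Delta\theta+f$ with $\Delta\theta\in C(H_{1/2})$ and $f\in C(H_{\widetilde\alpha})$ only for $\widetilde\alpha<\alpha$, owing to the loss in the product estimate. This is why the statement has $C^1(H_{\widetilde\alpha})$ with $\widetilde{\alpha}<\alpha$, and why $\widetilde\alpha\le 1/2$ suffices from $\Delta\theta$. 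Alternatively, we differentiate the Duhamel formula and use Hölder continuity of $f$ in time, which follows from the $C^1$ bound on $\mathbf{u}$ and the analytic-semigroup Hölder estimates for $\theta$. Continuity in time of all these quantities follows from dominated convergence in the spectral sums. Finally, uniqueness from Lemma \ref{shidingxing0111} identifies this mild solution with the weak solution.
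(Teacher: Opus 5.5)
Your strategy (alternating Duhamel bootstraps between the damped velocity equation and the analytic heat semigroup, then reading off time derivatives) is the same as the paper's. However, Step 3 does not prove the statement. From $\theta\in C(t_0,T;H_{\frac32})$ alone you get $\Delta\theta\in C(t_0,T;H_{\frac12})$ and nothing better. So $\partial_t\theta=\Delta\theta+f$ only yields $\theta\in C^1(t_0,T;H_{\frac12})$, and you concede as much by writing that ``$\widetilde\alpha\le\frac12$ suffices''. The theorem, however, asserts $C^1(t_0,T;H_{\widetilde\alpha})$ for $\widetilde\alpha\in(\frac12,\alpha)$.

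The missing step is to push the spatial bootstrap beyond $H_{\frac32}$. Once $f\in C(t_0,T;H_\alpha)$, the bound $\|e^{s\Delta}g\|_{H_{1+\widetilde\alpha}}\le Cs^{-(1+\widetilde\alpha-\alpha)}\|g\|_{H_\alpha}$ has an integrable singularity for $\widetilde\alpha<\alpha$. The Duhamel formula then gives $\theta\in C(t_0,T;H_{1+\widetilde\alpha})$, hence $\Delta\theta\in C(t_0,T;H_{\widetilde\alpha})$. This is what the paper does in \eqref{kendingshi0117}: it differentiates the Duhamel formula and uses $\|\Delta e^{(t-\tau)\Delta}f(\tau)\|_{H_{\widetilde\alpha}}\le C(t-\tau)^{\alpha-\widetilde\alpha-1}\|f(\tau)\|_{H_\alpha}$. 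It also shows that the restriction $\widetilde\alpha<\alpha$ comes from this endpoint loss in the heat integral, together with the cap $f\in H_\alpha$ imposed by $u_2\in H_\alpha$. It does not come from a loss in the product estimate, as you suggest. Your H\"older-in-time alternative would need $f\in C^{\nu}(H_{\widetilde\alpha})$, which you have not established.

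The same fact, $f\in C(t_0,T;H_\alpha)$ with $\alpha>\frac12$, is what Step 2 actually needs, and your justification there for reaching exactly $H_{\frac32}$ is wrong as written. A merely continuous $f$ with values in $H_{\frac12-\epsilon'}$ gives $\theta\in C(H_\beta)$ only for $\beta<\frac32-\epsilon'$. To reach $\beta=\frac32$ you need $f\in C(H_\delta)$ with $\delta>\frac12$. Proceed as follows:
\begin{enumerate}[label=(\roman*)]
\item First reach $\theta\in C(H_{\alpha+\frac12})$, which is possible since $\alpha+\frac12<\frac32$.
\item Recall that $H_\beta$ corresponds to $H^{2\beta}$ (for instance $H_{\frac12}=H^1$), and that $H^{2\alpha}$ is an algebra in two dimensions when $2\alpha>1$. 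This gives $\mathbf{u}\cdot\nabla\theta\in H_\alpha$.
\item Check the boundary conditions. The Dirichlet traces vanish as you observed. For $\alpha>\frac34$ you also need $\partial_{x_1}f=0$ at $x_1=0,L$, which holds because $u_1=\partial_{x_1}u_2=\partial_{x_1}\theta=0$ there.
\end{enumerate}
Your product rule $\delta<\min(s_2,s_1+s_2-1)$ is the Sobolev-index rule applied to spectral indices. In the spectral scale the correct conditions are $\delta\le\min(s_1,s_2)$ and $\delta<s_1+s_2-\frac12$. Your version is miscalibrated and also drops the cap $\delta\le s_1$, so ``another round gives $H_{\frac32}$'' does not follow as stated. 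Finally, Step 2 uses $\mathbf{u}\in\mathbf{H}_\alpha$ before Step 1 provides it. This is harmless, because $\theta\in L^2(t_0,T;H_1)$ from Lemma \ref{shidingxing0111} gives $\mathbf{u}\in C(t_0,T;\mathbf{H}_\alpha)$ immediately, as in the paper.
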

\begin{proof}
According to \cite{Evans2010}, from $\theta\in L^{\infty}\left(t_{0},T;H_{\frac{1}{2}}\right)\cap L^{2}\left(t_{0},T;H_{1}\right)$ and $\partial_{t}\theta\in L^{2}\left(t_{0},T;H_{0}\right)$, we have 
$\theta\in C\left(t_{0},T;H_{\frac{1}{2}}\right)$.

First, by $\eqref{jiedebiaoshi0111}_{1}$, we conclude that as \(\mathbf{u}_{0}\in \mathbf{H}_{\alpha}\)($\alpha\in \left[\frac{1}{2},1\right]$),
\begin{align}\label{xuyao10117}
\mathbf{u}\in C^{1}\left(t_0,T;\mathbf{H}_{\frac{1}{2}}\right)\cap C\left(t_0,T;\mathbf{H}_{\alpha}\right).
\end{align}
In fact, from \eqref{xingzhi0111},
we deduce 
\begin{align*}
&\begin{aligned}
\left\|\mathbf{u}\right\|_{\mathbf{H}_{\alpha}}&\leq e^{-V_{a}(t-t_{0})}\left\|\mathbf{u}_{0}\right\|_{\mathbf{H}_{\alpha}}
+V_{a}R\int_{t_0}^{t}e^{-V_{a}\left(t-\tau\right)}\left\|\mathbf{P}\left(\theta\left(\tau\right)\mathbf{e}_{2}\right)\right\|_{\mathbf{H}_{\alpha}} d\tau
\\
&\leq e^{-V_{a}(t-t_{0})}\left\|\mathbf{u}_{0}\right\|_{\mathbf{H}_{\alpha}}+CV_{a}R\left[\int_{t_0}^{t}
e^{-2V_{a}\left(t-\tau\right)}d\tau\right]^{\frac{1}{2}}
\left[\int_{t_0}^{t}\left\|\theta\left(\tau\right)\right\|_{H_{1}}^2d\tau\right]^{\frac{1}{2}}<+\infty, 
\end{aligned}
\\
&\begin{aligned}
\left\|\frac{d\mathbf{u}}{dt}\right\|_{\mathbf{H}_{\frac{1}{2}}}\leq V_{a}e^{-V_{a}(t-t_{0})}\left\|\mathbf{u}_{0}\right\|_{\mathbf{H}_{\frac{1}{2}}}+V_{a}R
\left\|\theta\right\|_{H_{\frac{1}{2}}}+V_{a}^2R\int_{t_0}^{t}e^{-V_{a}\left(t-\tau\right)}\left\|\mathbf{P}\left(\theta\left(\tau\right)\mathbf{e}_{2}\right)\right\|_{\mathbf{H}_{\frac{1}{2}}} d\tau<+\infty.
\end{aligned}
\end{align*}

We choose \(\frac{1}{2}<\alpha<1\) fixed and consider 
\begin{align}\label{dandukaolv20111}
\begin{cases}
\frac{\partial \theta}{\partial t}+\mathbf{u}\cdot\nabla\theta=Ru_{2}+\Delta \theta,
\\
\theta\left(t_0\right)=\theta_{0}.
\end{cases}
\end{align}
Then, $\eqref{jiedebiaoshi0111}_{2}$ implies that 
\begin{align}\label{xingzhi20111}
\theta\left(t\right)\in  C\left(t_0,T; H_{\beta}\right),~\text{where~}0<\beta<1~\text{is~arbitrary}.
\end{align}
In fact, we have
\begin{align*}
\begin{aligned}
\left\|\left(-\Delta\right)^{\beta}\theta\right\|_{H_{0}}&\leq 
\left\|e^{(t-t_{0})\Delta}\theta_{0}\right\|_{H_{\beta}}+\int_{t_0}^{t}
\left\|e^{\left(t-\tau\right)\Delta}f\left(\tau\right)\right\|_{H_{\beta}}d\tau
\\
&\leq \left\|e^{(t-t_{0})\Delta}\theta_{0}\right\|_{H_{\beta}} 
+C\int_{t_0}^{t}\left(t-\tau\right)^{-\frac{\beta}{2}}\left\|f\left(\tau\right)\right\|_{H_{\frac{\beta}{2}}}ds
\\
&\leq \left\|e^{(t-t_{0})\Delta}\theta_{0}\right\|_{H_{\beta}} 
+C\left[\int_{t_0}^{t}\left(t-\tau\right)^{-\beta}d\tau\right]^{\frac{1}{2}}\left[\int_{t_0}^{t}\left\|f\left(\tau\right)\right\|_{H_{\frac{\beta}{2}}}^2d\tau\right]^{\frac{1}{2}}<+\infty.
\end{aligned}
\end{align*}

Furthermore, from \eqref{xuyao10117} and \eqref{xingzhi20111}, one can conclude that there exists a \(\eta>0\) such that 
\begin{align*}
f\left(t\right)\in C\left(t_0,T; H_{\eta}\right),
\end{align*}
which together with $\eqref{jiedebiaoshi0111}_{2}$ yields that 
\begin{align*}
\begin{aligned}
\left\|\Delta\theta\right\|_{H_{0}}
\leq \left\|e^{(t-t_{0})\Delta}\theta_{0}\right\|_{H_{1}}+C\int_{t_0}^{t}
\left(t-\tau\right)^{\eta-1}d\tau 
\left\|f\left(t\right)\right\|_{L^{\infty}\left(t_0,T;H_{\eta}\right)}<+\infty.
\end{aligned}
\end{align*}
Thus, we have 
\begin{align}\label{jinyibutisheng0117}
\theta\left(t\right)\in  C\left(t_0,T; H_{1}\right).
\end{align}
As a result, we have 
\(f\left(t\right)\in C\left(t_0,T; H_{\frac{1}{2}}\right)\). Similarly, we obtain from $\eqref{jiedebiaoshi0111}_{2}$ 
 that 
\begin{align}\label{from0117}
\theta\left(t\right)\in C\left(t_0,T;H_{\sigma}\right),~\text{where~}\sigma\in \left[1,\frac{3}{2}
\right),
\end{align}
which yields that \(f\left(t\right)\in C\left(t_0,T;H_{\alpha}\right)\), where \(\alpha\in \left(\frac{1}{2},1\right)\). Thus, we can obtain that 
\begin{align}\label{yibuyibu0117}
\theta\left(t\right)\in C\left(t_0,T;H_{\frac{3}{2}}\right).
\end{align}

And 
\[
\frac{d\theta}{dt}
=\Delta \left(e^{(t-t_{0})\Delta}\theta_{0}\right)
+f\left(t\right)
+\int_{t_0}^{t}\Delta \left(e^{\left(t-\tau\right)\Delta}f\left(\tau\right)\right)ds,
\]
then, we can obtain that 
\begin{align}\label{kendingshi0117}
\left\|\frac{d\theta}{dt}\right\|_{H_{\widetilde{\alpha}}}
\leq \left\|\Delta \left(e^{(t-t_{0})\Delta}\theta_{0}\right)\right\|_{H_{\widetilde{\alpha}}}
+\left\|f\left(t\right)\right\|_{H_{\widetilde{\alpha}}}
+\int_{t_0}^{t}\left\|\Delta \left(e^{\left(t-\tau\right)\Delta}f\left(\tau\right)\right)\right\|_{H_{\widetilde{\alpha}}}d\tau,
\end{align}
where \(\frac{1}{2}<\widetilde{\alpha}<\alpha\) is arbitrary. Thus, 
\(\theta\left(t\right)\in C^{1}\left(t_0,T;H_{\widetilde{\alpha}}\right)\cap C\left(t_0,T;H_{\frac{3}{2}}\right)\). From \eqref{jiedebiaoshi0111}, we can deduce that 
\begin{align}\label{buqueding0117}
    \mathbf{u}\in C^{1}\left(t_0,T;\mathbf{H}_{\alpha}\right).
\end{align}
\end{proof}

\section{Locally invariant manifold}\label{buxian0205}
This section is devoted to proving the existence, attractivity and approximation formula of locally invariant manifold for the system \eqref{model}-\eqref{bianjian0814}.
Thus, we introduce an auxiliary system with a smooth cut-off function. Specifically, we consider the following modified system, which coincides with \eqref{model} in a sufficiently small neighborhood of the trivial solution and is globally well-posed:
\begin{align}\label{jiale0727}
\begin{cases}
\frac{\partial \mathbf{u}}{\partial t}=-V_{a}\mathbf{u}+
V_{a}R\mathbf{P}\left(\theta \mathbf{e}_{2}\right),
\\
\frac{\partial \theta}{\partial t}
=\Delta\theta-\rho\left(\frac{\left\|\mathbf{\Phi}\right\|_{\mathbf{X}_{\alpha}}}{r}\right)\mathbf{u}\cdot\nabla\theta+Ru_{2},
\\
\Phi(t_{0})=\Phi_{0}=\left(\mathbf{u}_{0},\theta_{0}\right),
\end{cases}
\end{align}
where 
\begin{align*}
\Phi\left(t\right)=\left(\mathbf{u},\theta\right),~
\rho\left(s\right)=
\begin{cases}
1,~0\leq s\leq 1,
\\
0,~~~s\geq 2,
\end{cases}
\end{align*}
 \(\rho\left(s\right)\in C^{\infty}\left(R^{+}\right)\), \(\left|\rho\left(s\right)\right|\leq 1,~\left|\rho'\left(s\right)\right|\leq 2\), \(r>0\) is a sufficiently small constant. Since the invariant manifold is a local object, the modification outside a small neighborhood of the origin does not affect the local dynamics near the trivial solution. Consequently, any global invariant manifold of \eqref{jiale0727} yields a local invariant manifold of \eqref{model}. We obtain the following conclusions.

\begin{theorem}\label{yashuo0727}\textup{(}the existence and structure of invariant manifold\textup{)}
    For $r>0$ sufficiently small constant and $R$ in the neighborhood of the critical value $R_{c}$ \textup{(}defined in \eqref{linjie0926}\textup{)}, the system \eqref{jiale0727} admits a global invariant manifold $M\left(R\right)$ defined in \eqref{xuzai1215}. 
\end{theorem}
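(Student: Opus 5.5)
We construct $h$ by the Lyapunov–Perron method applied to the truncated system \eqref{jiale0727}. Let $P_c$ and $P_s=I-P_c$ be the spectral projections onto $\mathbf{X}_c=\mathrm{span}\{\Phi^1_{k_0,1}:(k_0,1)\in B\}$ and $\mathbf{X}_s$. Their coefficients are computed with the biorthogonal family $\widetilde{\Phi}^j_{k,l}$ of Remark \ref{fangbian0205}. The semigroup \eqref{zuoyongfangshi0709} then splits as $T(t)=T_c(t)\oplus T_s(t)$. For $R$ near $R_c$ the critical eigenvalues are $O(|R-R_c|)$, so $\|T_c(t)\|\le Ce^{\varepsilon|t|}$ with $\varepsilon$ small. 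By \eqref{zuida1212} and Lemma \ref{tadepu0925}(4), $\|T_s(t)\|_{\mathbf{X}_\alpha\to\mathbf{X}_\alpha}\le Ce^{-\gamma_0 t}$ for $t\ge0$.

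Fix $\varepsilon<\gamma<\gamma_0$ and work in the weighted space
\[
\mathcal{C}_\gamma=\Bigl\{\Phi\in C((-\infty,0];\mathbf{X}_\alpha)\,:\,\|\Phi\|_{\gamma}:=\sup_{t\le0}e^{\gamma t}\|\Phi(t)\|_{\mathbf{X}_\alpha}<\infty\Bigr\}.
\]
For $\xi\in\mathbf{X}_c$, define the map
\[
\mathbf{F}_\xi(\Phi)(t)=T_c(t)\xi-\int_t^0T_c(t-\tau)P_c\mathbf{G}_\rho(\Phi(\tau))d\tau+\int_{-\infty}^tT_s(t-\tau)P_s\mathbf{G}_\rho(\Phi(\tau))d\tau ,
\]
where $\mathbf{G}_\rho(\Phi)=(\mathbf{0},-\rho(\|\Phi\|_{\mathbf{X}_\alpha}/r)\mathbf{u}\cdot\nabla\theta)$. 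Once $\mathbf{F}_\xi$ has a unique fixed point $\Phi_\xi$, we set $h(\xi)=P_s\Phi_\xi(0)$ and $M(R)=\{\xi+h(\xi):\xi\in\mathbf{X}_c\}$. Invariance of $M(R)$ follows in the standard way: a solution of \eqref{jiale0727} lies on $M(R)$ precisely when its backward history stays in $\mathcal{C}_\gamma$, and this property is preserved by the flow. Here the mild formulation \eqref{anxin0930} and the regularity of Theorem \ref{dingxia1002} ensure that these objects are genuine solutions.

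The key step is to show that $\mathbf{F}_\xi$ maps a closed ball of $\mathcal{C}_\gamma$ into itself and is a contraction there. The nonlinearity is not Lipschitz on $\mathbf{X}_\alpha$: it loses one derivative in $\theta$, since $\mathbf{u}\cdot\nabla\theta\in H_\alpha$ needs $\theta\in H_{\alpha+1/2}$ at least. We handle this using the partial dissipation.

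First, on the $\theta$-component, $T_s$ acts through the analytic heat semigroup. Its smoothing $\|e^{t\Delta}g\|_{H_{\beta}}\le Ct^{-(\beta-\delta)}\|g\|_{H_\delta}$ produces the integrable singularity $(t-\tau)^{-(3/2-\alpha)}$, which suffices because $\alpha>1/2$. The cutoff keeps $\|\mathbf{u}\|_{\mathbf{H}_\alpha}\le 2r$ and controls the product by $Cr\|\theta\|_{H_{3/2}}$.

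Second, the fixed-point functions satisfy the auxiliary bound \eqref{houmianxuyao0729}:
\[
\sup_{t\le0}e^{\gamma t}\|\theta\|_{H_{3/2}}\le C(R,r)\sup_{t\le0}e^{\gamma t}\|\mathbf{u}\|_{\mathbf{H}_\alpha}.
\]
To prove it, write $\theta$ through the Duhamel formula \eqref{jiedebiaoshi0111}$_2$ on $(-\infty,t]$, with forcing $Ru_2-\rho\,\mathbf{u}\cdot\nabla\theta$. Estimate in $H_{3/2}$ using the heat smoothing. Since $r$ is small, the term $Cr\|\theta\|_{H_{3/2}}$ is absorbed into the left-hand side.

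With this bound, the ball is taken in the stronger norm $\sup_{t\le0}e^{\gamma t}(\|\mathbf{u}\|_{\mathbf{H}_\alpha}+\|\theta\|_{H_{3/2}})$. On that space the estimate $\|\mathbf{G}_\rho(\Phi)-\mathbf{G}_\rho(\widetilde\Phi)\|$ gives a Lipschitz constant $Cr$, which can be made smaller than $1/2$ by choosing $r$ small. The velocity component is harmless: its equation is linear and $e^{-V_a t}$ is bounded.

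Finally, we would verify that $h$ is Lipschitz, with $h(0)=0$ and $Dh(0)=0$. These follow from the same contraction estimates with $\xi$ as a parameter. They give \eqref{hdedansheng1215}, and \eqref{xuzai1215} is the set $M(R)$ defined above.

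The main obstacle is the contraction step, and in particular proving \eqref{houmianxuyao0729} uniformly on the infinite interval $(-\infty,0]$. Two points need care. The spectral gap weights $e^{\gamma t}$ must be compatible with the heat smoothing kernels. The truncation $\rho$ depends on the weaker norm $\|\Phi\|_{\mathbf{X}_\alpha}$, while the nonlinear estimate needs the stronger one. If the direct absorption fails, I would first split the $H_{3/2}$ estimate into a short-time part, controlled by $(t-\tau)^{\alpha-3/2}$ integrability, and a long-time part, controlled by the exponential decay $e^{-\pi^2(t-\tau)}$ of the heat semigroup. I would then choose $\gamma<\pi^2$ so that the weighted convolution converges.
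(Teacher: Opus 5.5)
Your overall scheme matches the paper's. You use the Lyapunov--Perron map $\mathbf{F}(\xi,\cdot)$ on the weighted space $C_{\gamma}^{-}$, heat smoothing of the temperature to recover the derivative lost in $\mathbf{u}\cdot\nabla\theta$, the auxiliary bound \eqref{houmianxuyao0729} obtained by absorbing the $C(r)$-term, and $h$ read off at $t=0$ as in \eqref{hdedansheng1215}.

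The gap is the contraction step as you state it. Write $\rho_i=\rho(\|\Phi_i(t)\|_{\mathbf{X}_{\alpha}}/r)$. In your stronger norm, only the term $\rho_2\,\mathbf{u}_2\cdot\nabla(\theta_1-\theta_2)$ of $\widetilde{\mathbf{G}}(\Phi_1)-\widetilde{\mathbf{G}}(\Phi_2)$ is genuinely $O(r)$. The cross terms $\rho_2(\mathbf{u}_1-\mathbf{u}_2)\cdot\nabla\theta_1$ and $(\rho_1-\rho_2)\,\mathbf{u}_1\cdot\nabla\theta_1$ are bounded in $H_{\alpha}$ only by $C\|\theta_i(t)\|_{H_{3/2}}\|\Phi_1(t)-\Phi_2(t)\|_{\mathbf{X}_{\alpha}}$ for the appropriate $i$; the factor $1/r$ from $\rho'$ is merely cancelled by $\|\mathbf{u}_i\|_{\mathbf{H}_{\alpha}}\le 2r$. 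The cut-off controls $\|\Phi(t)\|_{\mathbf{X}_{\alpha}}$, not $\|\theta(t)\|_{H_{3/2}}$. A ball of radius $K$ in your weighted norm only gives $\|\theta_1(t)\|_{H_{3/2}}\le Ke^{-\gamma t}$, which blows up as $t\to-\infty$. Moreover $K$ must be at least of order $\|\xi\|$, since $\Phi_\xi(0)=\xi+h(\xi)$ and you want a graph over all of $\mathbf{X}_c$. So the Lipschitz constant is not $Cr$. The bound \eqref{houmianxuyao0729} does not rescue this: it is derived from the Duhamel formula for $\theta$, so it holds for fixed points, not for arbitrary elements of the ball. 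You flag the weak-norm/strong-norm mismatch yourself, but your fallback (short/long-time splitting, $\gamma<\pi^2$) only addresses convergence of the weighted convolutions, not this missing smallness.

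In the paper the smallness comes from the uniform, unweighted bound \eqref{yexu0625}, $\|\theta(t)\|_{H_{3/2}}\le Cr$, which controls $I_1$ and $I_3$ in Proposition \ref{babay0120}; you have no counterpart of it. Be aware, though, that \eqref{yexu0625} is itself derived from the Duhamel representation under $\|\Phi(\tau)\|_{\mathbf{X}_{\alpha}}\le 2r$ for all $\tau\le t$. So it too holds along solutions, not on all of $C_{\gamma}^{-}$.

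A robust repair is to stop measuring the nonlinearity in $\mathbf{X}_{\alpha}$.
\begin{itemize}
\item Since $2\alpha>1$, the product estimate in $H^{2\alpha}\times H^{2\alpha-1}$ gives $\|\mathbf{u}\cdot\nabla\theta\|_{H_{\alpha-\frac{1}{2}}}\le C\|\mathbf{u}\|_{\mathbf{H}_{\alpha}}\|\theta\|_{H_{\alpha}}$. Because the cut-off uses the same $\mathbf{X}_{\alpha}$ norm, $\widetilde{\mathbf{G}}$ is then globally Lipschitz from $\mathbf{X}_{\alpha}$ into $\{\mathbf{0}\}\times H_{\alpha-\frac{1}{2}}$ with constant $O(r)$.
\item The stable part of $T(t)$ still smooths such data. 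Use the linear version of \eqref{jiedebiaoshi0111} on $[0,1]$ (heat kernel $t^{-1/2}$ for $\theta$, bounded Leray projection for $\mathbf{u}$), then $\|T(t)\mathbf{P}_{s}\|_{\mathbf{X}_{\alpha}\to\mathbf{X}_{\alpha}}\le Ce^{-\gamma_0 t}$. This yields $\|T(t)\mathbf{P}_{s}(\mathbf{0},g)\|_{\mathbf{X}_{\alpha}}\le C(1+t^{-\frac{1}{2}})e^{-\gamma_0 t}\|g\|_{H_{\alpha-\frac{1}{2}}}$. The $\mathbf{P}_c$ part is finite-rank and harmless.
\end{itemize}
With these two facts $\mathbf{F}(\xi,\cdot)$ is a contraction on the paper's $C_{\gamma}^{-}$ with constant $O(r)$, and no $H_{3/2}$ information is needed. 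The bound \eqref{houmianxuyao0729} then becomes an a posteriori regularity property of the manifold.
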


\begin{theorem}\label{bijin0727} \textup{(}the attractivity\textup{)}Under the same condition of \autoref{yashuo0727},
there exists a constant $\alpha^{*}>0$, such that the solution $\Phi\left(t\right)=\Phi_{c}\left(t\right)+\Phi_{s}\left(t\right)$, where $\Phi_{c}\left(t\right)\in \mathbf{X}_{c}$ and $\Phi_{s}\left(t\right)\in\mathbf{X}_{s}$, to the system \eqref{jiale0727} satisfies the following approximation 
\begin{align*}
    \lim\limits_{t\rightarrow+\infty}
    e^{\alpha^{*}t}\left\|\Phi_{s}\left(t\right)-h\left(\Phi_{c}\left(t\right)\right)\right\|_{\mathbf{X}_{\alpha}}=0,
\end{align*}
where $\mathbf{X}_{c}$ and $\mathbf{X}_{s}$ are the critical and stable subspaces of $\mathbf{L}$, respectively.  $h:\mathbf{X}_{c}\rightarrow\mathbf{X}_{s}$ is the invariant manifold function defined in \eqref{hdedansheng1215}.
\end{theorem}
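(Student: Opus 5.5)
We will prove the attractivity estimate by the standard device of estimating the deviation $v(t)=\Phi_{s}(t)-h(\Phi_{c}(t))$ through the Duhamel formula on the stable subspace. We compare the given trajectory with the trajectory on $M(R)$ that has the same critical component, i.e.\ with the solution $\widetilde\Phi$ of \eqref{jiale0727} starting from $\Phi_{c}(t_0)+h(\Phi_{c}(t_0))$. Since the cutoff makes the nonlinearity globally small, we expect the desired estimate \eqref{guanjianbudnegshi0729}.

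\textbf{Step 1: Duhamel representations.} Let $P_{s}$ denote the spectral projection onto $\mathbf{X}_{s}$, built from the biorthogonal system of Remark \ref{fangbian0205}, and let $\mathbf{G}_{\rho}(\Phi)=\rho(\|\Phi\|_{\mathbf{X}_\alpha}/r)\mathbf{G}(\Phi)$. By \eqref{anxin0930}, adapted to \eqref{jiale0727},
\[
\Phi_{s}(t)=T(t-t_0)\Phi_{s}(t_0)+\int_{t_0}^{t}T(t-\tau)P_{s}\mathbf{G}_{\rho}(\Phi(\tau))\,d\tau .
\]
From the fixed-point characterization \eqref{hdedansheng1215} of $h$, we have $h(\xi)=\int_{-\infty}^{0}T(-\tau)P_s\mathbf{G}_\rho(\Psi_\xi(\tau))d\tau$, where $\Psi_\xi$ is the backward-bounded trajectory on $M(R)$ with critical part $\xi$ at time $0$. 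Invariance of $M(R)$ (Theorem \ref{yashuo0727}) lets us write $h(\Phi_c(t))$ as a Duhamel formula along a manifold trajectory. Subtracting the two formulas gives
\[
v(t)=T(t-t_0)v(t_0)+\int_{t_0}^{t}T(t-\tau)P_s\big[\mathbf{G}_\rho(\Phi)-\mathbf{G}_\rho(\Phi_c+h(\Phi_c))\big]d\tau .
\]
The first term is bounded by $Ce^{-\gamma_0(t-t_0)}$ using \eqref{zuida1212} and the explicit semigroup \eqref{zuoyongfangshi0709}. This works because on $\mathbf{X}_s$ every eigenvalue is at most $-\gamma_0$, including the accumulation at $-V_a$, and the $\mathbf{X}_\alpha$-norm is diagonal up to the bounded change of basis between $\{\Phi^{j}_{kl}\}$ and the Fourier basis.

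\textbf{Step 2: the nonlinear difference, which is the main obstacle.} The difference $\mathbf{G}_\rho(\Phi)-\mathbf{G}_\rho(\widetilde\Phi)$ contains $\mathbf{u}\cdot\nabla(\theta-\widetilde\theta)$. This term is not controlled in $H_\alpha$ by $\|\Phi-\widetilde\Phi\|_{\mathbf{X}_\alpha}$, so \eqref{zaiyi0822} fails. We will avoid this in two ways.

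First, only the temperature component of $\mathbf{G}$ is nonzero. So before the velocity feels the nonlinearity through the coupling $V_aR\mathbf{P}(\theta\mathbf{e}_2)$, it first passes through the heat semigroup $e^{t\Delta}$. Its analytic smoothing gives $\|e^{(t-\tau)\Delta}g\|_{H_\alpha}\le C(t-\tau)^{-\alpha+\frac{1}{2}-\epsilon}e^{-\gamma_0(t-\tau)}\|g\|_{H_{\frac{1}{2}-\epsilon}}$, which is integrable.

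Second, the remaining norm is estimated by
\[
\|\mathbf{u}\cdot\nabla\theta-\widetilde{\mathbf{u}}\cdot\nabla\widetilde\theta\|_{H_{\frac12-\epsilon}}\le C\|\mathbf{u}-\widetilde{\mathbf{u}}\|_{\mathbf{H}_\alpha}\|\theta\|_{H_{\alpha+\frac12}}+C\|\widetilde{\mathbf u}\|_{\mathbf{H}_\alpha}\|\theta-\widetilde\theta\|_{H_{\alpha+\frac12}} .
\]
Here Lemma \ref{xuyaode1231} bounds $\|\theta\|_{H_{\alpha+\frac12}}\le C r$ inside the cutoff region. Lemma \ref{xuyaode0116} bounds $\|\theta-\widetilde\theta\|_{H_{\alpha+\frac12}}$ by $\|\Phi-\widetilde\Phi\|_{\mathbf{X}_\alpha}$ after integrating against an integrable time kernel $C(t,\tau)$. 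The derivative of the cutoff contributes a term $\frac{2}{r}\big|\|\Phi\|_{\mathbf X_\alpha}-\|\widetilde\Phi\|_{\mathbf X_\alpha}\big|\,\|\mathbf{G}(\widetilde\Phi)\|\le C r\|\Phi-\widetilde\Phi\|$, which is again of order $r$. Combining these yields
\[
\|v(t)\|_{\mathbf{X}_\alpha}\le Ce^{-\gamma_0(t-t_0)}+Cr\int_{t_0}^{t}k(t-\tau)e^{-\gamma_0(t-\tau)}\|v(\tau)\|_{\mathbf{X}_\alpha}d\tau ,
\]
with $k$ an integrable singular kernel. This requires $\|\Phi-\widetilde\Phi\|\le C\|v\|$ along trajectories. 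That bound is the genuinely delicate point, because $\Phi_c$ and $\widetilde\Phi_c$ can drift apart. We will try first to handle it with a Lyapunov–Perron style exponential-dichotomy argument: the center part of the difference is controlled backward by the Lipschitz constant $\mathrm{Lip}(h)=O(r)$ coming from Proposition \ref{babay0120}.

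\textbf{Step 3: conclusion.} A singular Gronwall (Henry-type) inequality applied to the last display gives $\|v(t)\|_{\mathbf{X}_\alpha}\le C e^{-(\gamma_0-C'r)(t-t_0)}$. Choosing $r$ small and $0<\alpha^*<\gamma_0-C'r$ (strictly smaller) gives $e^{\alpha^*t}\|v(t)\|_{\mathbf{X}_\alpha}\to0$, which proves Theorem \ref{bijin0727}.
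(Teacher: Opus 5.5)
Your outline is the paper's proof. Your identity $v(t)=T(t-t_0)v(t_0)+\int_{t_0}^{t}T(t-\tau)\mathbf{P}_s[\mathbf{G}_\rho(\Phi)-\mathbf{G}_\rho(\Phi_c+h(\Phi_c))]\,d\tau$ is exactly the paper's $v=I_1+I_2+I_3$ (with $I_1+I_2=T(t)v(0)$), and you then use Lemmas~\ref{xuyaode1231} and \ref{xuyaode0116} and close with a Gronwall-type argument, as the paper does. However, the point you flag as ``genuinely delicate'' is a real gap, and as you set it up it cannot be closed.

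Invariance of $M(R)$ gives a Duhamel formula for $h(\widetilde\Phi_c(t))$ only along a trajectory $\widetilde\Phi$ that lies on $M(R)$. Subtracting it from the formula for $\Phi_s$ yields $\Phi_s(t)-h(\widetilde\Phi_c(t))$ with integrand $\mathbf{G}_\rho(\Phi)-\mathbf{G}_\rho(\widetilde\Phi)$. It does not yield $v(t)$ with integrand $\mathbf{G}_\rho(\Phi)-\mathbf{G}_\rho(\Phi_c+h(\Phi_c))$. Even if $h$ were $C^1$, which is not established, the evolution of $v$ would carry an extra term $-Dh(\Phi_c)\mathbf{P}_c[\mathbf{G}_\rho(\Phi)-\mathbf{G}_\rho(\Phi_c+h(\Phi_c))]$ that your formula drops.

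For your comparison solution, started from $\Phi_c(t_0)+h(\Phi_c(t_0))$, the bound $\|\Phi-\widetilde\Phi\|_{\mathbf{X}_\alpha}\le C\|v\|_{\mathbf{X}_\alpha}$ is false in general. The difference $\Phi_c-\widetilde\Phi_c$ vanishes at $t_0$, but it is driven by $\mathbf{P}_c[\mathbf{G}_\rho(\Phi)-\mathbf{G}_\rho(\widetilde\Phi)]$ with no damping on $\mathbf{X}_c$. So the two critical parts typically converge to different limits. This already happens for $\dot x=xy$, $\dot y=-y$: the manifold $y=0$ consists of equilibria, while $x(t)\to x_0e^{y_0}\neq x_0$. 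The bound $\mathrm{Lip}(h)=O(r)$ does not help, because it controls stable differences by critical ones, not the reverse.

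If you instead set $\widetilde\Phi:=\Phi_c+h(\Phi_c)$, then $\Phi-\widetilde\Phi=v$ trivially. But $\widetilde\Phi$ is not a solution, so Lemma~\ref{xuyaode0116} does not apply to the pair, and that lemma is the only device compensating the derivative loss in $\mathbf{u}\cdot\nabla(\theta-\widetilde\theta)$. The paper gets past this point by asserting \eqref{meizi1221}, which silently replaces $\mathbf{P}_c\Phi(\tau;\xi)$ by $\Phi_c(\tau)$. It then applies Lemmas~\ref{xuyaode1231}--\ref{xuyaode0116} to $\Phi_c+h(\Phi_c)$. You have therefore reproduced the argument including its weakest link, and ``we will try'' leaves it open.

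The standard repair is an asymptotic-phase construction, which is the correct form of your Lyapunov--Perron remark. Do not fix $\widetilde\Phi_c(t_0)=\Phi_c(t_0)$. Instead, look for $w=\widetilde\Phi-\Phi$ with $\sup_{t\ge0}e^{\beta t}\|w(t)\|_{\mathbf{X}_\alpha}<\infty$, where $\max(0,-\lambda_0)+Cr<\beta<\gamma_0-Cr$, as a fixed point of
\[
w(t)=T(t)w_s(0)-\int_t^{\infty}T(t-\tau)\mathbf{P}_c\big[\mathbf{G}_\rho(\Phi+w)-\mathbf{G}_\rho(\Phi)\big]d\tau+\int_0^{t}T(t-\tau)\mathbf{P}_s\big[\mathbf{G}_\rho(\Phi+w)-\mathbf{G}_\rho(\Phi)\big]d\tau,
\]
with $w_s(0)=h(\Phi_c(0)+w_c(0))-\Phi_s(0)$. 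This pins the critical component of the difference at $t=+\infty$ rather than at $t_0$.

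With this choice, $\widetilde\Phi=\Phi+w$ is a genuine solution lying on $M(R)$. Estimates of the type in Lemma~\ref{xuyaode0116} then legitimately apply to the pair $(\Phi,\widetilde\Phi)$ and make the map contractive; your singular-kernel Gronwall handles the kernels. Finally, $v=-w_s+h(\widetilde\Phi_c)-h(\Phi_c)$ together with Lemma~\ref{xiang1215} gives $\|v(t)\|_{\mathbf{X}_\alpha}\le C(1+\mathrm{Lip}\,h)\|w(t)\|_{\mathbf{X}_\alpha}\le Ce^{-\beta t}$. This proves the theorem for any $\alpha^*<\beta$.
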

\subsection{The existence and structure of locally invariant manifold}\label{lianxu0727}
In this subsection, we will prove the existence of local invariant manifold for the system \eqref{model}-\eqref{bianjian0814}.
 Similarly, we can rewrite the system \eqref{jiale0727} into abstract form 
 \begin{align}\label{suanzixing1010}
\begin{cases}
\frac{d\Phi}{dt}=\mathbf{L}\Phi+
\widetilde{\mathbf{G}}\left(\Phi\right),
\\
\Phi\left(t_{0}\right)=\left(
\mathbf{u}_{0},\theta_{0}\right),
\end{cases}
\end{align}
where 
\(\widetilde{\mathbf{G}}\left(\Phi\right)
=\widetilde{G}\left(\Phi,\Phi\right)\), 
\(\Phi\left(t_{0}\right)\) is the initial value and 
$$
\widetilde{G}(\Phi,\widetilde{\Phi})=\begin{pmatrix}
        \mathbf{0}
        \\
        -\rho\left(\frac{\left\|\mathbf{\Phi}\right\|_{\mathbf{X}_{\alpha}}}{r}\right)\mathbf{u}\cdot\nabla\widetilde{\theta}
    \end{pmatrix}.
$$
Then, we can conclude that the system \eqref{suanzixing1010} has a unique solution $\Phi\left(t\right)\in C\left(t_{1},t_{2};\mathbf{X}_{\alpha}\right)$ if $\Phi(t_{1})\in \mathbf{X}_{\alpha}=\mathbf{H}_{\alpha}\times H_{\alpha}$ and $\frac{1}{2}<\alpha<1$. Furthermore, the solution $\Phi\left(t\right)$ can be expressed in form 
\begin{align}\label{guaqila1010}
 \Phi\left(t\right)=T\left(t-t_{1}\right)\Phi\left(t_{1}\right)+\int_{t_{1}}^{t}T\left(t-\tau\right)\widetilde{\mathbf{G}}\left(\Phi\left(\tau\right)\right)d\tau.
 \end{align}

From the discussion of spectrum of $\mathbf{L}$, we have the following decomposition, 
\begin{align}\label{kongjianfenjie0709}
\begin{aligned}
\mathbf{H}=\mathbf{X}_{c}+\overline{\mathbf{X}}_{s},
\end{aligned}
\end{align}
where \(\mathbf{X}_{c}\) and \(\mathbf{X}_{s}\) are spanned by 
\(\left\{\Phi_{k_{1},1}^{1},\cdots,\Phi_{k_{m},1}^{1}\right\}\) and the rest eigenvectors of \(\mathbf{L}\), respectively. And, we define two projection operators as follows 
\begin{align}\label{touyingsuanzi0709}
\mathbf{P}_{c}:\mathbf{H}\rightarrow \mathbf{X}_{c},~
\mathbf{P}_{s}:\mathbf{H}\rightarrow \overline{\mathbf{X}}_{s}.
\end{align}
One can easily find that 
\begin{align}\label{jiaohuan0709}
\mathbf{L}\circ \mathbf{P}_{c}=\mathbf{P}_{c}\circ 
\mathbf{L}=\mathbf{P}_{c},~
\mathbf{L}\circ \mathbf{P}_{s}=\mathbf{P}_{s}\circ 
\mathbf{L}=\mathbf{P}_{s}.
\end{align}


From \eqref{guaqila1010}, we have
\begin{align}\label{manmanlai0117}
\begin{aligned}
\mathbf{P}_{c}\Phi\left(t_{2}\right)=\mathbf{P}_{c}T\left(t_{2}-t_{1}\right)\Phi\left(t_{1}\right)
+\mathbf{P}_{c}\int_{t_{1}}^{t_{2}}T\left(t_{2}-\tau\right)\widetilde{\mathbf{G}}\left(\Phi\left(\tau\right)\right)d\tau,
\end{aligned}
\end{align}
which deduces that 
\begin{align}\label{buhuitou0117}
\begin{aligned}
\mathbf{P}_{c}T\left(t-t_{1}\right)\Phi\left(t_{1}\right)=T\left(t-t_{2}\right)\mathbf{P}_{c}
\Phi\left(t_{2}\right)
-\int_{t_{1}}^{t_{2}}T\left(t-\tau\right)\mathbf{P}_{c}\widetilde{\mathbf{G}}\left(\Phi\left(\tau\right)\right)d\tau.
\end{aligned}
\end{align}
And from \eqref{guaqila1010}, we have 
\begin{align*}
\begin{aligned}
\Phi\left(t\right)
=&T\left(t-t_{1}\right)\mathbf{P}_{c}\Phi\left(t_{1}\right)+T\left(t-t_{1}\right)\mathbf{P}_{s}\Phi\left(t_{1}\right)
\\
&+\int_{t_{1}}^{t}T\left(t-\tau\right)
\mathbf{P}_{c}\widetilde{\mathbf{G}}\left(\Phi\left(\tau\right)\right)d\tau 
+\int_{t_{1}}^{t}T\left(t-\tau\right)
\mathbf{P}_{s}\widetilde{\mathbf{G}}\left(\Phi\left(\tau\right)\right)d\tau,
\end{aligned}
\end{align*}
which together with \eqref{buhuitou0117} gives that 
\begin{align}\label{bashuohao0117}
\begin{aligned}
\Phi\left(t\right)
=&T\left(t-t_{2}\right)\mathbf{P}_{c}\Phi\left(t_{2}\right)+T\left(t-t_{1}\right)\mathbf{P}_{s}\Phi\left(t_{1}\right)
\\
&-\int_{t}^{t_{2}}T\left(t-\tau\right)
\mathbf{P}_{c}\widetilde{\mathbf{G}}\left(\Phi\left(\tau\right)\right)d\tau 
+\int_{t_{1}}^{t}T\left(t-\tau\right)
\mathbf{P}_{s}\widetilde{\mathbf{G}}\left(\Phi\left(\tau\right)\right)d\tau.
\end{aligned}
\end{align}

For some \(\gamma>0\) satisfying $\gamma-\gamma_{0}<0$ where $\gamma_{0}$ can be found in \eqref{zuida1212}, we denote 
\[
C_{\gamma}^{-}
=\left\{
\Phi\in C\left(-\infty,0;\mathbf{X}_{\alpha}\right)
|\sup\limits_{t\leq 0}e^{\gamma t}\left\|\Phi\left(t\right)\right\|_{\mathbf{X}_{\alpha}}<+\infty\right\}
\]
and 
\[
\left\|\Phi\left(t\right)\right\|_{C_{\gamma}^{-}}=\sup\limits_{t\leq 0}e^{\gamma t}\left\|\Phi\left(t\right)\right\|_{\mathbf{X}_{\alpha}}.
\]

\begin{proposition}\label{bushouqingxu0117}
    If the equation \eqref{suanzixing1010} has a mild solution \(\Phi\left(t\right)\in C_{\gamma}^{-}\) with \(\Phi\left(0\right)=\Phi_{0}\), then \(\Phi\left(t\right)\) solves the following integral equation 
    \begin{align}\label{shatuo0117}
    \Phi\left(t\right)
=T\left(t\right)\mathbf{P}_{c}\Phi_{0}
-\int_{t}^{0}T\left(t-\tau\right)
\mathbf{P}_{c}\widetilde{\mathbf{G}}\left(\Phi\left(\tau\right)\right)d\tau 
+\int_{-\infty}^{t}T\left(t-\tau\right)
\mathbf{P}_{s}\widetilde{\mathbf{G}}\left(\Phi\left(\tau\right)\right)d\tau.
    \end{align}
Conversely, if \(\Phi\left(t\right)\in C_{\gamma}^{-}\) solves \eqref{shatuo0117} with \(\Phi\left(0\right)=\Phi_{0}\), then 
\(\Phi\left(t\right)\) is a mild solution of the equation \eqref{suanzixing1010}.
\end{proposition}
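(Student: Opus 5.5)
The plan is to derive \eqref{shatuo0117} from the two-time splitting formula \eqref{bashuohao0117} by taking $t_{2}=0$ and letting $t_{1}\to-\infty$. For the converse, we reverse the computation: split \eqref{shatuo0117} at an intermediate time and recover the mild formulation \eqref{guaqila1010}.

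\emph{Necessity.} Let $\Phi\in C_{\gamma}^{-}$ be a mild solution with $\Phi(0)=\Phi_{0}$. For any $t_{1}<t\leq 0$, formula \eqref{bashuohao0117} with $t_{2}=0$ gives
\begin{align*}
\Phi(t)=T(t)\mathbf{P}_{c}\Phi_{0}+T(t-t_{1})\mathbf{P}_{s}\Phi(t_{1})-\int_{t}^{0}T(t-\tau)\mathbf{P}_{c}\widetilde{\mathbf{G}}(\Phi(\tau))d\tau+\int_{t_{1}}^{t}T(t-\tau)\mathbf{P}_{s}\widetilde{\mathbf{G}}(\Phi(\tau))d\tau .
\end{align*}
It remains to let $t_{1}\to-\infty$, and two facts are needed.

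First, $T(t)$ restricted to $\overline{\mathbf{X}}_{s}$ decays in the $\mathbf{X}_{\alpha}$ norm:
\begin{align*}
\|T(s)\mathbf{P}_{s}\Psi\|_{\mathbf{X}_{\alpha}}\leq Ce^{-\gamma_{0}s}\|\Psi\|_{\mathbf{X}_{\alpha}},\quad s\geq 0 .
\end{align*}
This follows from the explicit representation \eqref{zuoyongfangshi0709} together with \eqref{zuida1212}. Each eigenpair $\Phi^{1}_{k,l},\Phi^{2}_{k,l}$ spans the same two-dimensional Fourier mode $(k,l)$. One checks that the change of basis between $\{\Phi^{1}_{k,l},\Phi^{2}_{k,l}\}$ and the orthonormal Fourier pair is uniformly bounded in $(k,l)$; this holds because $u^{j}_{k,l}$ stays bounded away from degeneracy as $r_{kl}\to\infty$ by Lemma \ref{tadepu0925}(4). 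The $\mathbf{X}_{\alpha}$ weights then factor out mode by mode.

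Given this decay, the boundary term is controlled by
\begin{align*}
\|T(t-t_{1})\mathbf{P}_{s}\Phi(t_{1})\|_{\mathbf{X}_{\alpha}}\leq Ce^{-\gamma_{0}(t-t_{1})}e^{-\gamma t_{1}}\|\Phi\|_{C_{\gamma}^{-}}=Ce^{-\gamma_{0}t}e^{(\gamma_{0}-\gamma)t_{1}}\|\Phi\|_{C_{\gamma}^{-}} .
\end{align*}
Since $\gamma<\gamma_{0}$, this tends to $0$ as $t_{1}\to-\infty$.

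Second, the improper integral $\int_{-\infty}^{t}T(t-\tau)\mathbf{P}_{s}\widetilde{\mathbf{G}}(\Phi(\tau))d\tau$ must converge in $\mathbf{H}$ (and in fact in $\mathbf{X}_{\alpha}$). Because of the cut-off, $\widetilde{\mathbf{G}}(\Phi(\tau))\neq0$ only when $\|\Phi(\tau)\|_{\mathbf{X}_{\alpha}}\leq 2r$. In that region the $\theta$-component is upgraded to $H_{\frac32}$ by the smoothing in Theorem \ref{dingxia1002}. Hence $\|\mathbf{u}\cdot\nabla\theta\|_{L^{2}}\leq C\|\mathbf{u}\|_{L^{4}}\|\nabla\theta\|_{L^{4}}$ is bounded uniformly in $\tau$ by a constant depending on $r$, using the embedding $H^{\alpha}\subset L^{4}$ in two dimensions. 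At minimum, $\|\widetilde{\mathbf{G}}(\Phi(\tau))\|_{\mathbf{H}}\leq C(r)$ for all $\tau$. Combining this with the decay $e^{-\gamma_{0}(t-\tau)}$ gives absolute convergence in $\mathbf{H}$, which suffices for the pointwise identity.

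\emph{Sufficiency.} Suppose $\Phi\in C_{\gamma}^{-}$ satisfies \eqref{shatuo0117}. For $t_{1}<t\leq0$, write \eqref{shatuo0117} at $t_{1}$ and apply $T(t-t_{1})$, using the semigroup property and the commutation of $T$ with $\mathbf{P}_{c},\mathbf{P}_{s}$. This gives
\begin{align*}
T(t-t_{1})\Phi(t_{1})=T(t)\mathbf{P}_{c}\Phi_{0}-\int_{t_{1}}^{0}T(t-\tau)\mathbf{P}_{c}\widetilde{\mathbf{G}}\,d\tau+\int_{-\infty}^{t_{1}}T(t-\tau)\mathbf{P}_{s}\widetilde{\mathbf{G}}\,d\tau .
\end{align*}
Subtracting this from \eqref{shatuo0117} at time $t$, the $\mathbf{P}_{c}$-integrals combine into $\int_{t_{1}}^{t}T(t-\tau)\mathbf{P}_{c}\widetilde{\mathbf{G}}\,d\tau$ and the $\mathbf{P}_{s}$-integrals into $\int_{t_{1}}^{t}T(t-\tau)\mathbf{P}_{s}\widetilde{\mathbf{G}}\,d\tau$. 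Adding the two yields exactly \eqref{guaqila1010}, so $\Phi$ is a mild solution. The $\mathbf{P}_{c}$ part is harmless because $T$ restricted to $\mathbf{X}_{c}$ is a finite-dimensional group, so it is defined for negative times as well.

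\emph{Main obstacle.} I expect the hard step to be the uniform-in-$(k,l)$ exponential bound for $T(s)\mathbf{P}_{s}$ in $\mathbf{X}_{\alpha}$. The eigenvectors are not orthogonal, so this requires an explicit estimate of the $2\times2$ Gram matrices using the formula for $u^{j}_{k,l}$ and Remark \ref{fangbian0205}. Everything else is bookkeeping of integrals that converge because $\gamma<\gamma_{0}$.
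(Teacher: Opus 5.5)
Your proposal is correct and is the paper's own argument: necessity from \eqref{bashuohao0117} with $t_{2}=0$ and $t_{1}\to-\infty$, and sufficiency by propagating \eqref{shatuo0117} from an intermediate time $t_{1}$ and subtracting, with the decay and convergence facts that the paper leaves implicit made explicit (for the convergence you need no $H_{\frac{3}{2}}$ upgrade, since for $\alpha>\frac{1}{2}$ one has $\|\mathbf{u}\cdot\nabla\theta\|_{L^{2}}\le\|\mathbf{u}\|_{L^{\infty}}\|\nabla\theta\|_{L^{2}}\le C\|\Phi\|_{\mathbf{X}_{\alpha}}^{2}\le Cr^{2}$ on the support of the cut-off, and Theorem \ref{dingxia1002} does not by itself give bounds uniform in $\tau\in(-\infty,0]$). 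The step you flag as the main obstacle is short: Vieta's formulas for \eqref{danshi0814} give $r_{kl}^{2}u^{1}_{k,l}u^{2}_{k,l}=-V_{a}$, hence $|\langle\Phi^{1}_{k,l},\Phi^{2}_{k,l}\rangle|\le\frac{|1-V_{a}|}{1+V_{a}}<1$ uniformly in $(k,l)$, and this is what actually yields the uniformly bounded change of basis (the eigenvalue limits in Lemma \ref{tadepu0925}(4) alone do not determine the direction of $\Phi^{2}_{k,l}$).
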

\begin{proof}
    If \(\Phi\left(t\right)\in C_{\gamma}^{-}\) is a mild solution to \eqref{suanzixing1010}, then we have \eqref{bashuohao0117}. Let 
    \(t_{2}=0\) in \eqref{bashuohao0117}, then for \(t_{1}\leq t\leq 0\), we have
    \begin{align}\label{guaiguai0117}
    \begin{aligned}
    \Phi\left(t\right)=&T\left(t\right)\mathbf{P}_{c}\Phi_{0}+T\left(t-t_{1}\right)\mathbf{P}_{s}\Phi\left(t_{1}\right)
    \\
    &-\int_{t}^{0}T\left(t-\tau\right)
    \mathbf{P}_{c}\widetilde{\mathbf{G}}\left(\Phi\left(\tau\right)\right)d\tau 
    +\int_{t_{1}}^{t}T\left(t-\tau\right)\mathbf{P}_{s}\widetilde{\mathbf{G}}\left(\Phi\left(\tau\right)\right)d\tau,
    \end{aligned}
    \end{align}
    which together with \(\Phi\left(t\right)\in C_{\gamma}^{-}\) and \(t_{1}\rightarrow -\infty\) yields \eqref{shatuo0117} since $T\left(t-t_{1}\right)\mathbf{P}_{s}\Phi\left(t_{1}\right)\rightarrow0$ as $t_{1}\rightarrow-\infty$.

    If \(\Phi\left(t\right)\in C_{\gamma}^{-}\) satisfies 
    \eqref{shatuo0117}, then we have 
    \begin{align*}
    \mathbf{P}_{s}\Phi\left(t_{1}\right)=\int_{-\infty}^{t_{1}}T\left(t_{1}-\tau\right)\mathbf{P}_{s}\widetilde{\mathbf{G}}\left(\Phi\left(\tau\right)\right)d\tau,
    \end{align*}
    which deduces 
    \begin{align}\label{miss0117}
    T\left(t-t_{1}\right)\mathbf{P}_{s}\Phi\left(t_{1}\right)=\int_{-\infty}^{t_{1}}T\left(t-\tau\right)\mathbf{P}_{s}\widetilde{\mathbf{G}}\left(\Phi\left(\tau\right)\right)d\tau.
    \end{align}
    And 
    \[
    \int_{-\infty}^{t}T\left(t-\tau\right)\mathbf{P}_{s}\widetilde{\mathbf{G}}\left(\Phi\left(\tau\right)\right)d\tau=\int_{-\infty}^{t_{1}}T\left(t-\tau\right)\mathbf{P}_{s}\widetilde{\mathbf{G}}\left(\Phi\left(\tau\right)\right)d\tau+\int_{t_{1}}^{t}T\left(t-\tau\right)\mathbf{P}_{s}\widetilde{\mathbf{G}}\left(\Phi\left(\tau\right)\right)d\tau.
    \]
    Thus, we obtain 
    \begin{align}\label{try0117}
    \int_{-\infty}^{t}T\left(t-\tau\right)\mathbf{P}_{s}\widetilde{\mathbf{G}}\left(\Phi\left(\tau\right)\right)d\tau=T\left(t-t_{1}\right)\mathbf{P}_{s}\Phi\left(t_{1}\right)+\int_{t_{1}}^{t}T\left(t-\tau\right)\mathbf{P}_{s}\widetilde{\mathbf{G}}\left(\Phi\left(\tau\right)\right)d\tau.
    \end{align}
    In addition, from \eqref{shatuo0117}, we obtain 
    \[
    T\left(t_{1}\right)\mathbf{P}_{c}\Phi_{0}=\mathbf{P}_{c}\Phi\left(t_{1}\right)+\int_{t_{1}}^{0}T\left(t_{1}-\tau\right)\mathbf{P}_{c}\widetilde{\mathbf{G}}\left(\Phi\left(\tau\right)\right)d\tau,
    \]
    which indicates that 
    \begin{align}\label{never0117}
    T\left(t\right)\mathbf{P}_{c}\Phi_{0}=T\left(t-t_{1}\right)\mathbf{P}_{c}\Phi\left(t_{1}\right)+\int_{t_{1}}^{0}T\left(t-\tau\right)\mathbf{P}_{c}\widetilde{\mathbf{G}}\left(\Phi\left(\tau\right)\right)d\tau.
    \end{align}
    Substituting \eqref{try0117} and \eqref{never0117} into \eqref{shatuo0117}, we obtain 
   \eqref{guaqila1010}. Thus, 
   \(\Phi\left(t\right)\) is a mild solution to the equation \eqref{suanzixing1010}.
\end{proof}
\begin{remark}
    One can refer to \cite{Pazy1983} for the definition of mild solution. From the Proposition \ref{bushouqingxu0117}, we define a set as follows,
    \begin{align}\label{jihe1010}
      \begin{aligned}
          M(R)=\left\{\Phi_{0}\in\mathbf{X}_{\alpha}|\Phi\left(t\right)\in C^{-}_{\gamma}~\text{solves~}\eqref{suanzixing1010}~\text{and~}\Phi\left(0\right)=\Phi_{0}\right\}.
      \end{aligned}  
    \end{align}
    Subsequently, we will show that $M\left(R\right)$ is invariant.
\end{remark}

\begin{proof}\textup(\textbf{Proof for the existence conclusion in \autoref{yashuo0727}}\textup{)} For $\forall t_{0}>0$,
by the uniqueness as in Lemma \ref{shidingxing0111}, we obtain
  \begin{align*}
   \begin{aligned}    \Phi(t,t_{0})=\Phi&\left(t;R,\Phi\left(t_{0};R,\Phi_{0}\right)\right)=\Phi\left(t+t_{0};R,\Phi_{0}\right)
   \end{aligned} 
  \end{align*}
  is the solution to the system \eqref{suanzixing1010} with initial value $\Phi\left(0,t_{0}\right)=\Phi\left(t_{0};R,\Phi_{0}\right)$. 

According to the definition of $M\left(R\right)$,  $\Phi\left(t,t_{0}\right)\in C_{\gamma}^{-}$. In addition,  by Theorem \ref{dingxia1002}, $\forall~t_{0}>0$, $\Phi\left(t_{0};R,\Phi_{0}\right)\in\mathbf{X}_{\alpha}$. 

  After all, we conclude that $\Phi\left(t_{0};R,\Phi_{0}\right)\in M\left(R\right)$. Thus, $M\left(R\right)$ is invariant.
\end{proof}

The next step is to consider the structure of $M\left(R\right)$.
 From \autoref{dingxia1002}, as \(\mathbf{u}_{0}\in\mathbf{H}_{\alpha}\) and \(\theta_{0}\in H_{\alpha}\), 
 we have 
 \[
 \theta\left(t\right)\in C^{1}\left(0,T;H_{\widetilde{\alpha}}\right)\cap C\left(0,T;H_{\frac{3}{2}}\right),~\mathbf{u}\in C^{1}\left(0,T;\mathbf{H}_{\alpha}\right),~\frac{1}{2}<\widetilde{\alpha}<\alpha<1.
 \]
And as \(\Phi\left(t\right)\in C_{\gamma}^{-}\) and from $\eqref{jiedebiaoshi0111}_{2}$, 
\begin{align*}
\begin{aligned}
\theta\left(t\right)
=\int_{-\infty}^{t}e^{\left(t-\tau\right)\Delta}Ru_{2}\left(\tau\right)d\tau-\int_{-\infty}^{t}
e^{\left(t-\tau\right)\Delta}\rho\left(\frac{\left\|\mathbf{\Phi}\right\|_{\mathbf{X}_{\alpha}}}{r}\right)\mathbf{u}\left(\tau\right)\cdot\nabla\theta\left(\tau\right)d\tau,
\end{aligned}
\end{align*}
which together with $\left\|\Phi\right\|_{\mathbf{X}_{\alpha}}\leq 2r$ yields that 
\begin{align}\label{yexu0625}
    \begin{aligned}
    &\begin{aligned}
\left\|\theta\left(t\right)\right\|_{H_{1}}&\leq CR\int_{-\infty}^{t}\left(t-\tau\right)^{\alpha-1}e^{-\frac{\pi^2}{2}\left(t-\tau\right)}\left\|
\mathbf{u}\right\|_{\mathbf{H}_{\alpha}}d\tau
\\
&+C\int_{-\infty}^{t}\left(t-\tau\right)^{\alpha-\frac{3}{2}}e^{-\frac{\pi^2}{2}\left(t-\tau\right)}
\left\|\mathbf{u}\cdot\nabla\theta\right\|_{H_{\alpha-\frac{1}{2}}}d\tau
\leq Cr,
    \end{aligned}
    \\
    &\begin{aligned}
        \left\|\theta\left(t\right)\right\|_{H_{\alpha+\frac{1}{2}}}&\leq CR\int_{-\infty}^{t}\left(t-\tau\right)^{-\frac{1}{2}}e^{-\frac{\pi^2}{2}\left(t-\tau\right)}\left\|
\mathbf{u}\right\|_{\mathbf{H}_{\alpha}}d\tau
\\
&+C\int_{-\infty}^{t}\left(t-\tau\right)^{-\alpha}e^{-\frac{\pi^2}{2}\left(t-\tau\right)}
\left\|\mathbf{u}\cdot\nabla\theta\right\|_{H_{\frac{1}{2}}}d\tau
\leq Cr,
    \end{aligned}
    \\
    &\begin{aligned}
        \left\|\theta\left(t\right)\right\|_{H_{\frac{3}{2}}}&\leq CR\int_{-\infty}^{t}\left(t-\tau\right)^{\alpha-\frac{3}{2}}e^{-\frac{\pi^2}{2}\left(t-\tau\right)}\left\|
\mathbf{u}\right\|_{\mathbf{H}_{\alpha}}d\tau
\\
&+C\int_{-\infty}^{t}\left(t-\tau\right)^{\alpha-\frac{3}{2}}e^{-\frac{\pi^2}{2}\left(t-\tau\right)}
\left\|\mathbf{u}\cdot\nabla\theta\right\|_{H_{\alpha}}d\tau
\leq Cr,
    \end{aligned}
    \end{aligned}
\end{align}
and
 \begin{align}\label{xuyao0119}
 \begin{aligned}
 e^{\gamma t}&\left\|\theta\left(t\right)\right\|_{H_{\frac{3}{2}}}
\leq CR\int_{-\infty}^{t}
\left(t-\tau\right)^{\alpha-\frac{3}{2}}e^{-\left(\frac{\pi^2}{2}-\gamma\right)\left(t-\tau\right)}d\tau
\sup\limits_{t\leq 0}e^{\gamma t}\left\|\mathbf{u}\right\|_{\mathbf{H}_{\alpha}}
\\
&+C\int_{-\infty}^{t}
\left(t-\tau\right)^{\alpha-\frac{3}{2}}e^{-\left(\frac{\pi^2}{2}-\gamma\right)\left(t-\tau\right)}\rho\left(\frac{\left\|\mathbf{\Phi}\right\|_{\mathbf{X}_{\alpha}}}{r}\right)e^{\gamma \tau}\left\|\mathbf{u}\left(\tau\right)\cdot\nabla\theta\left(\tau\right)\right\|_{H_{\alpha}}
d\tau
\\
&\leq C\left(R\right)\sup\limits_{t\leq 0}e^{\gamma t}\left\|\mathbf{u}\right\|_{\mathbf{H}_{\alpha}}+C\left(r\right)
\sup\limits_{t\leq 0}e^{\gamma t}\left\|\theta\right\|_{H_{\frac{3}{2}}},
 \end{aligned}
 \end{align}
where \(C\left(r\right)>0\) increases with \(r\) and we can define \(C\left(r\right)|_{r=0}:=0\). Thus, if \(r>0\) is small enough, we can deduce that 
\begin{align}\label{houmianxuyao0119}
\sup\limits_{t\leq 0}e^{\gamma t}
\left\|\theta\right\|_{H_{\frac{3}{2}}}\leq C\left(R,r\right)\sup\limits_{t\leq 0}e^{\gamma t}\left\|\mathbf{u}\right\|_{\mathbf{H}_{\alpha}}.
\end{align}

Now, we define 
\begin{align}\label{yinshe0119}
\mathbf{F}\left(\xi,\cdot\right)
:C_{\gamma}^{-}\rightarrow C_{\gamma}^{-}
\end{align}
as follows 
\begin{align*}
\begin{aligned}
\mathbf{F}\left(\xi,\Phi\right)
=T\left(t\right)\mathbf{P}_{c}\xi 
-\int_{t}^{0}T\left(t-\tau\right)
\mathbf{P}_{c}\widetilde{\mathbf{G}}\left(r,\Phi\left(\tau\right)\right)
d\tau 
+\int_{-\infty}^{t}T\left(t-\tau\right)\mathbf{P}_{s}\widetilde{\mathbf{G}}\left(r,\Phi\left(\tau\right)\right)d\tau,
\end{aligned}
\end{align*}
 where \(\xi\in\mathbf{X}_{\alpha}\) is given and
 \[
\widetilde{\mathbf{G}}\left(r,\Phi\right)=\widetilde{\mathbf{G}}\left(\Phi\right).
 \]
\begin{proposition}\label{babay0120}For any \(\Phi_{1}=\left(\mathbf{u}_{1},\theta_{1}\right)\) and \(\Phi_{2}=\left(\mathbf{u}_{2},\theta_{2}\right)\in C_{\gamma}^{-}\), then, there exists a positive constant \(\widetilde{C}\left(r\right)>0\) such that 
\begin{align}\label{xuyao0120}
\begin{aligned}
\sup\limits_{t\leq 0}e^{\gamma t}\left\|\widetilde{\mathbf{G}}\left(r,\Phi_{1}\right)-\widetilde{\mathbf{G}}\left(r,\Phi_{2}\right)\right\|_{\mathbf{X}_{\alpha}}
\leq \widetilde{C}\left(r\right)\sup\limits_{t\leq 0}
e^{\gamma t}\left\|\Phi_{1}-\Phi_{2}\right\|_{\mathbf{X}_{\alpha}},
\end{aligned}
\end{align}
where \(\widetilde{C}\left(r\right)\) increases with \(r\) and \(\widetilde{C}\left(0\right)=0\).
\end{proposition}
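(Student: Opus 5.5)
The plan is to write out the difference of the nonlinearities explicitly and split it into a term coming from the cut-off and a genuinely bilinear term. Since the velocity component of $\widetilde{\mathbf{G}}$ vanishes, only the temperature component matters. Writing $\rho_i=\rho(\|\Phi_i\|_{\mathbf{X}_{\alpha}}/r)$, we have
\begin{align*}
\widetilde{\mathbf{G}}(r,\Phi_1)-\widetilde{\mathbf{G}}(r,\Phi_2)
=-\bigl(\rho_1-\rho_2\bigr)\mathbf{u}_1\cdot\nabla\theta_1
-\rho_2\bigl[(\mathbf{u}_1-\mathbf{u}_2)\cdot\nabla\theta_1+\mathbf{u}_2\cdot\nabla(\theta_1-\theta_2)\bigr].
\end{align*}
Each piece will be estimated in $H_{\alpha}$ pointwise in $t$. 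In dimension two, for $\frac12<\alpha<1$, the space $H_{\alpha}$ (which is $H^{2\alpha}$) is a Banach algebra. This gives
\begin{align*}
\|\mathbf{u}\cdot\nabla\theta\|_{H_{\alpha}}\le C\|\mathbf{u}\|_{\mathbf{H}_{\alpha}}\|\theta\|_{H_{\frac{3}{2}}}.
\end{align*}
The key point is that $\nabla\theta$ must be taken in $H^{2\alpha}\subset H^{1+2\alpha-1}$, which requires $\theta\in H^{1+2\alpha}$. That space is only $H_{\frac12+\alpha}\subset H_{\frac32}$ in the paper's indexing, so the bound closes with $\|\theta\|_{H_{\frac32}}$ on the right. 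This is exactly why \eqref{yexu0625} and \eqref{houmianxuyao0119} were proved.

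Next, I would bound each term using the support of the cut-off. Since $\rho$ vanishes for $s\ge2$, every nonzero term lives where $\|\Phi_i\|_{\mathbf{X}_{\alpha}}\le 2r$. On that set \eqref{yexu0625} gives $\|\theta_i\|_{H_{\frac32}}\le Cr$. For the first term I would use $|\rho_1-\rho_2|\le \frac{2}{r}\|\Phi_1-\Phi_2\|_{\mathbf{X}_{\alpha}}$ together with $\|\mathbf{u}_1\|_{\mathbf{H}_{\alpha}}\|\theta_1\|_{H_{\frac32}}\le 2r\cdot Cr$. This yields a bound $Cr\|\Phi_1-\Phi_2\|_{\mathbf{X}_{\alpha}}$; the case where only one $\rho_i$ is nonzero is handled by swapping the roles of the indices in the splitting. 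The second term is bounded by $C\|\mathbf{u}_1-\mathbf{u}_2\|_{\mathbf{H}_{\alpha}}\cdot Cr$.

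The third term, $\|\mathbf{u}_2\|_{\mathbf{H}_{\alpha}}\|\theta_1-\theta_2\|_{H_{\frac32}}\le 2r\|\theta_1-\theta_2\|_{H_{\frac32}}$, is the main obstacle. The $H_{\frac32}$ norm of the difference is not controlled pointwise by $\|\Phi_1-\Phi_2\|_{\mathbf{X}_{\alpha}}$. This is where the weighted supremum is essential. I would redo the argument of \eqref{xuyao0119} for the difference. For $\Phi_i\in C_\gamma^-$ solving the integral equation, one has
$\theta_1-\theta_2=\int_{-\infty}^t e^{(t-\tau)\Delta}\bigl[R(u_{1,2}-u_{2,2})-(\rho_1\mathbf{u}_1\cdot\nabla\theta_1-\rho_2\mathbf{u}_2\cdot\nabla\theta_2)\bigr]d\tau$.
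I would use the analytic smoothing $\|e^{s\Delta}f\|_{H_{\frac32}}\le Cs^{\alpha-\frac32}e^{-\frac{\pi^2}{2}s}\|f\|_{H_{\alpha}}$, which is integrable since $\alpha>\frac12$, together with $\gamma<\frac{\pi^2}{2}$. Applying the same splitting inside the integral then gives
\begin{align*}
\sup_{t\le0}e^{\gamma t}\|\theta_1-\theta_2\|_{H_{\frac32}}\le C(R)\sup_{t\le0}e^{\gamma t}\|\Phi_1-\Phi_2\|_{\mathbf{X}_{\alpha}}+C r\sup_{t\le0}e^{\gamma t}\|\theta_1-\theta_2\|_{H_{\frac32}}.
\end{align*}
For $r$ small the last term can be absorbed, which gives the difference analogue of \eqref{houmianxuyao0119}.

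Finally, I would multiply the pointwise bounds by $e^{\gamma t}$, take the supremum over $t\le0$, and insert the absorbed estimate. The result is \eqref{xuyao0120} with $\widetilde C(r)=Cr(1+C(R,r))$, which increases with $r$ and tends to $0$ as $r\to0$. The delicate point to check is that the proposition, as applied to the map $\mathbf{F}$, is used only for $\Phi_i$ that are fixed-point candidates, so that the temperature representation from \eqref{yexu0625} is available. If the statement must hold for arbitrary elements of $C_\gamma^-$, I would instead redefine the norm on $C_\gamma^-$ to include $\sup_{t\le 0} e^{\gamma t}\|\theta\|_{H_{\frac32}}$.
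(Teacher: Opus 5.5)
Your proposal is correct and follows the paper's proof. You use the same three-term splitting: the cut-off difference is controlled by $|\rho'|\le 2$ together with \eqref{yexu0625}, the velocity-difference term by $\|\theta_i\|_{H_{\frac{3}{2}}}\le Cr$, and the temperature-difference term by redoing \eqref{xuyao0119} for $\theta_1-\theta_2$ and absorbing. The one-sided cut-off cases are handled exactly as in the paper's cases 2 and 3.

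Your caveat is apt: the argument needs the integral representation of $\theta_i$ over $(-\infty,t]$, so it applies to mild solutions rather than arbitrary elements of $C_{\gamma}^{-}$, and the paper uses this same restriction silently. One correction: the inclusion is $H_{\frac{3}{2}}\subset H_{\alpha+\frac{1}{2}}$, not the reverse. Your norm bound $\|\mathbf{u}\cdot\nabla\theta\|_{H_{\alpha}}\le C\|\mathbf{u}\|_{\mathbf{H}_{\alpha}}\|\theta\|_{H_{\frac{3}{2}}}$ is nevertheless right.
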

\begin{proof}
We consider three cases: 1)
\(\left\|\mathbf{\Phi}_{1}\right\|_{\mathbf{X}_{\alpha}}\leq 2r\) and 
\(\left\|\mathbf{\Phi}_{2}\right\|_{\mathbf{X}_{\alpha}}\leq 2r\); 2)
\(\left\|\mathbf{\Phi}_{1}\right\|_{\mathbf{X}_{\alpha}}\leq 2r\) and 
\(\left\|\mathbf{\Phi}_{2}\right\|_{\mathbf{X}_{\alpha}}\geq 2r\); 
3) \(\left\|\mathbf{\Phi}_{1}\right\|_{\mathbf{X}_{\alpha}}\geq 2r\) and 
\(\left\|\mathbf{\Phi}_{2}\right\|_{\mathbf{X}_{\alpha}}\leq 2r\).

For the first case, we have 
\begin{align}\label{bubian0120}
\begin{aligned}
\rho\left(\frac{\left\|\mathbf{\Phi}_{2}\right\|_{\mathbf{X}_{\alpha}}}{r}\right)\mathbf{\Phi}_{2}\cdot\nabla\theta_{2}-\rho\left(\frac{\left\|\mathbf{\Phi}_{1}\right\|_{\mathbf{X}_{\alpha}}}{r}\right)\mathbf{u}_{1}\cdot\nabla\theta_{1}=\sum\limits_{i=1}^{3}I_{i},
\end{aligned}
\end{align}
where 
\begin{align*}
\begin{aligned}
I_{1}=\rho\left(\frac{\left\|\mathbf{\Phi}_{2}\right\|_{\mathbf{X}_{\alpha}}}{r}\right)&\left(\mathbf{u}_{2}-\mathbf{u}_{1}\right)\cdot\nabla\theta_{2},~I_{2}=\rho\left(\frac{\left\|\mathbf{\Phi}_{2}\right\|_{\mathbf{X}_{\alpha}}}{r}\right)\mathbf{u}_{1}\cdot\nabla\left(\theta_{2}-\theta_{1}\right),
\\
&I_{3}=\left[\rho\left(\frac{\left\|\mathbf{\Phi}_{2}\right\|_{\mathbf{X}_{\alpha}}}{r}\right)-\rho\left(\frac{\left\|\mathbf{\Phi}_{1}\right\|_{\mathbf{X}_{\alpha}}}{r}\right)
\right]\mathbf{u}_{1}\cdot\nabla\theta_{1}
\end{aligned}
\end{align*}
A direct computation gives that 
\begin{align*}
\begin{aligned}
e^{\gamma t}\left\|I_{1}\right\|_{H_{\alpha}}
\leq C\left(\Omega\right)e^{\gamma t}
\left\|\theta_{2}\right\|_{H_{\frac{3}{2}}}\left\|\Phi_{1}-\Phi_{2}\right\|_{\mathbf{X}_{\alpha}}\leq C_{1}\left(r,\Omega\right)
\sup\limits_{t\leq 0}e^{\gamma t}\left\|\Phi_{1}-\Phi_{2}\right\|_{\mathbf{X}_{\alpha}},
\end{aligned}
\end{align*}
where \(\left|\rho\left(s\right)\right|\leq 1\) and \eqref{yexu0625} are used. And one can easily verify that \(C_{1}\left(r,\Omega\right)|_{r=0}=0\).

For \(I_{3}\), we have 
\begin{align*}
\begin{aligned}
e^{\gamma t}\left\|I_{3}\right\|_{H_{\alpha}}
\leq \frac{2C}{r}e^{\gamma t}\left|
\left\|\Phi_{1}\right\|_{\mathbf{H}_{\alpha}}-\left\|\Phi_{2}\right\|_{\mathbf{H}_{\alpha}}\right|\left\|\mathbf{u}_{1}\right\|_{\mathbf{H}_{\alpha}}\left\|\theta_{1}\right\|_{H_{\frac{3}{2}}}
\leq C_{3}\left(r,\Omega\right)\sup\limits_{t\leq 0}e^{\gamma t}
\left\|\Phi_{1}-\Phi_{2}\right\|_{\mathbf{X}_{\alpha}},
\end{aligned}
\end{align*}
where \(\left|\rho'\left(s\right)\right|\leq 2\), \(\left\|u_{1}\right\|_{\mathbf{H}_{\alpha}}\leq 2r\) and \eqref{yexu0625} are used. One can obtain that \(C_{3}\left(r,\Omega\right)|_{r=0}=0\).

Since
\begin{align*}
\begin{aligned}
\theta_{1}\left(t\right)
=-\int_{-\infty}^{t}e^{\left(t-\tau\right)\Delta}Ru_{12}\left(\tau\right)d\tau-\int_{-\infty}^{t}
e^{\left(t-\tau\right)\Delta}\rho\left(\frac{\left\|\mathbf{\Phi}_{1}\right\|_{\mathbf{X}_{\alpha}}}{r}\right)\mathbf{u}_{1}\left(\tau\right)\cdot\nabla\theta_{1}\left(\tau\right)d\tau,
\\
\theta_{2}\left(t\right)
=-\int_{-\infty}^{t}e^{\left(t-\tau\right)\Delta}Ru_{22}\left(\tau\right)d\tau-\int_{-\infty}^{t}
e^{\left(t-\tau\right)\Delta}\rho\left(\frac{\left\|\mathbf{\Phi}_{2}\right\|_{\mathbf{X}_{\alpha}}}{r}\right)\mathbf{u}_{2}\left(\tau\right)\cdot\nabla\theta_{2}\left(\tau\right)d\tau,
\end{aligned}
\end{align*}
then, follow the steps to conclude \eqref{houmianxuyao0119} similarly, one can obtain  
\begin{align*}
\begin{aligned}
e^{\gamma t}
\left\|\theta_{1}-\theta_{2}\right\|_{H_{\frac{3}{2}}}
\leq C\left(r,\Omega\right)\sup\limits_{t\leq0}e^{\gamma t}\left\|\mathbf{u}_{1}-\mathbf{u}_{2}\right\|_{\mathbf{H}_{\alpha}}, 
\end{aligned}
\end{align*}
by which we have 
\begin{align*}
\begin{aligned}
e^{\gamma t}\left\|I_{2}\right\|_{H_{\alpha}}
\leq C_{2}\left(r,\Omega\right)
\sup\limits_{t\leq 0}e^{\gamma t}
\left\|\Phi_{1}-\Phi_{2}\right\|_{\mathbf{X}_{\alpha}},
\end{aligned}
\end{align*}
where \(C_{2}\left(r,\Omega\right)|_{r=0}=0\). Thus, \eqref{xuyao0120} holds.

For the second and third scenarios, we just consider \(\left\|\mathbf{\Phi}_{1}\right\|_{\mathbf{X}_{\alpha}}\geq 2r\) and 
\(\left\|\mathbf{\Phi}_{2}\right\|_{\mathbf{X}_{\alpha}}\leq 2r\). Since 
\begin{align*}
\begin{aligned}
\rho\left(\frac{\left\|\mathbf{\Phi}_{2}\right\|_{\mathbf{X}_{\alpha}}}{r}\right)u_{2}\cdot\nabla\theta_{2}-\rho\left(\frac{\left\|\mathbf{\Phi}_{1}\right\|_{\mathbf{X}_{\alpha}}}{r}\right)u_{1}\cdot\nabla\theta_{1}=\left(\rho\left(\frac{\left\|\mathbf{\Phi}_{2}\right\|_{\mathbf{X}_{\alpha}}}{r}\right)-\rho\left(\frac{\left\|\mathbf{\Phi}_{1}\right\|_{\mathbf{X}_{\alpha}}}{r}\right)\right)u_{2}\cdot\nabla\theta_{2},
\end{aligned}
\end{align*}
which together with the Lagrange's mean value theorem and \eqref{yexu0625} yields  \eqref{xuyao0120}.
\end{proof}
\begin{theorem}\label{yashuo1010}
    Assume that the first eigenvalue of $\mathbf{L}$ is $\lambda_{0}$, $R$ is in the neighbourhood of $R_{c}$ and $|\lambda_{0}|<\gamma<\gamma_{0}$, where $\gamma_{0}$ can be found in \eqref{zuida1212}, then
one can verify that \(F\left(\Phi\right)\in C_{\gamma}^{-}\) for each 
\(\Phi\in C_{\gamma}^{-}\). And 
$$
\mathbf{F}\left(\xi,\cdot\right)
:C_{\gamma}^{-}\rightarrow C_{\gamma}^{-}
$$
is a contraction map if $r$ is sufficiently small. That is, 
\begin{align}\label{yasuo1215}
\left\|\mathbf{F}\left(\xi,\Phi_{1}\left(t\right)\right)-\mathbf{F}\left(\xi,\Phi_{2}\left(t\right)\right)\right\|_{C_{\gamma}^{-}}
\leq \mathcal{K}\left\|\Phi_{1}\left(t\right)-\Phi_{2}\left(t\right)\right\|_{C_{\gamma}^{-}},
\end{align}
where $\mathcal{K}=\mathcal{K}\left(r,\lambda_{0},\gamma_{0}\right)>0$ and $\mathcal{K}\left(0\right)=0$.
\end{theorem}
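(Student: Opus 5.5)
The plan is to estimate the three pieces of $\mathbf{F}(\xi,\Phi_1)-\mathbf{F}(\xi,\Phi_2)$ separately. The linear term $T(t)\mathbf{P}_c\xi$ cancels in the difference. Membership $\mathbf{F}(\xi,\Phi)\in C_\gamma^-$ then follows from the same estimates with $\Phi_2=0$, using $\widetilde{\mathbf{G}}(r,0)=0$. For the linear term itself we note that $\mathbf{X}_c$ is finite dimensional and $T(t)\mathbf{P}_c\xi=\sum_j e^{\lambda_{k_j,1}^1 t}\langle\xi,\widetilde{\Phi}^1_{k_j,1}\rangle\Phi^1_{k_j,1}$. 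Hence $e^{\gamma t}\|T(t)\mathbf{P}_c\xi\|_{\mathbf{X}_\alpha}\le Ce^{(\gamma-|\lambda_0|)|t|}\|\xi\|$ for $t\le0$, which is bounded because $\gamma>|\lambda_0|$. Continuity in $t$ with values in $\mathbf{X}_\alpha$ follows from Theorem \ref{dingxia1002} and the mild formulation.

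\textbf{Center part.} Again using finite dimensionality, all norms on $\mathbf{X}_c$ are equivalent. Since $\mathbf{P}_c$ is given by inner products with the smooth vectors $\widetilde{\Phi}^1_{k_j,1}$, it is bounded from $\mathbf{H}$, and a fortiori from $\mathbf{X}_\alpha$, into $\mathbf{X}_\alpha$. For $t\le\tau\le0$ we have $\|T(t-\tau)\mathbf{P}_c\|\le Ce^{|\lambda_0|(\tau-t)}$. Therefore
\begin{align*}
e^{\gamma t}\Bigl\|\int_t^0T(t-\tau)\mathbf{P}_c\bigl(\widetilde{\mathbf{G}}(r,\Phi_1)-\widetilde{\mathbf{G}}(r,\Phi_2)\bigr)d\tau\Bigr\|_{\mathbf{X}_\alpha}
\le C\int_t^0e^{(\gamma-|\lambda_0|)(t-\tau)}d\tau\,\sup_{\tau\le0}e^{\gamma\tau}\|\widetilde{\mathbf{G}}(r,\Phi_1)-\widetilde{\mathbf{G}}(r,\Phi_2)\|_{\mathbf{X}_\alpha}.
\end{align*}
The integral is bounded by $(\gamma-|\lambda_0|)^{-1}$. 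Proposition \ref{babay0120} then bounds the right-hand side by $C(\gamma-|\lambda_0|)^{-1}\widetilde{C}(r)\|\Phi_1-\Phi_2\|_{C_\gamma^-}$.

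\textbf{Stable part.} This is the step I expect to be the main obstacle, because $T(t)$ is not analytic and we cannot trade regularity for a singular kernel. The key observation is that we need none: Proposition \ref{babay0120} already controls the nonlinearity difference in the \emph{same} norm $\mathbf{X}_\alpha$. So it suffices to show $\|T(t)\mathbf{P}_s\psi\|_{\mathbf{X}_\alpha}\le Me^{-\gamma_0 t}\|\psi\|_{\mathbf{X}_\alpha}$ for $t\ge0$. I would prove this from the explicit representation \eqref{zuoyongfangshi0709}. Each $(k,l)$ block is a $2\times2$ system (or scalar when $k=0$) with eigenvalues $\le-\gamma_0$ by \eqref{zuida1212}. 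The $\mathbf{X}_\alpha$-norm is diagonal in the Fourier modes $(k,l)$ up to the weights $\gamma_{kl}^{2\alpha}$, which are common to the $\mathbf{u}$ and $\theta$ components of a mode. So it is enough to bound the condition numbers of the eigenbases $\{\Phi^1_{k,l},\Phi^2_{k,l}\}$ uniformly in $(k,l)$, in the rescaled coordinates $(u_{kl},\theta_{kl})$.

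For the condition numbers, one checks from \eqref{tezhenzhi0814} that $\sqrt{\Delta_{kl}}\ge|V_a-r_{kl}^2|$. Hence the eigenvalue gap is bounded below for large $r_{kl}$. The angle between the two normalized eigenvectors stays away from zero, since $u^1_{kl}r_{kl}$ and $u^2_{kl}r_{kl}$ tend to distinct limits as $r_{kl}\to\infty$. Exceptional near-degenerate modes with $r_{kl}^2\approx V_a$ are finitely many, and there $\Delta_{kl}>0$ strictly. This yields a uniform $M$.

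Given this uniform bound,
\begin{align*}
e^{\gamma t}\int_{-\infty}^te^{-\gamma_0(t-\tau)}e^{-\gamma\tau}d\tau=(\gamma_0-\gamma)^{-1},
\end{align*}
so the stable part is bounded by $M(\gamma_0-\gamma)^{-1}\widetilde{C}(r)\|\Phi_1-\Phi_2\|_{C_\gamma^-}$.

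Combining the three pieces gives \eqref{yasuo1215} with
\[
\mathcal{K}=\widetilde{C}(r)\bigl[C(\gamma-|\lambda_0|)^{-1}+M(\gamma_0-\gamma)^{-1}\bigr].
\]
Since $\widetilde{C}(0)=0$ and $\widetilde{C}$ is increasing, choosing $r$ small makes $\mathcal{K}<1$.

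One remaining point needs care. Proposition \ref{babay0120} was proved using \eqref{yexu0625}, \eqref{houmianxuyao0119}, which hold for $\Phi$ that are solutions of the modified system. For a general $\Phi\in C_\gamma^-$ I would instead apply the proposition on the closed subset of $C_\gamma^-$ on which the $H_{3/2}$ bound \eqref{houmianxuyao0119} holds. I would then verify that $\mathbf{F}$ maps this subset into itself, using that the $\theta$-component of $\mathbf{F}(\xi,\Phi)$ inherits the $H_{3/2}$ smoothing of $e^{t\Delta}$ exactly as in \eqref{xuyao0119}.
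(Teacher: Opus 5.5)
Apart from your last paragraph, this is the paper's proof. You use the same three-piece splitting, the kernels $e^{\lambda_0(t-\tau)}$ and $e^{-\gamma_0(t-\tau)}$, and Proposition~\ref{babay0120} to close. Your uniform condition-number bound for the $2\times2$ blocks is correct and justifies $\|T(t)\mathbf{P}_s\|_{\mathbf{X}_\alpha\to\mathbf{X}_\alpha}\le Me^{-\gamma_0 t}$, which the paper simply uses with $M=1$.

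\textbf{The gap is in your last paragraph.} You are right that Proposition~\ref{babay0120} is only established for trajectories. Its term $I_2=\rho\bigl(\|\Phi_2\|_{\mathbf{X}_\alpha}/r\bigr)\mathbf{u}_1\cdot\nabla(\theta_2-\theta_1)$ is handled via $e^{\gamma t}\|\theta_1-\theta_2\|_{H_{3/2}}\le C\sup_{t\le0}e^{\gamma t}\|\mathbf{u}_1-\mathbf{u}_2\|_{\mathbf{H}_\alpha}$, which comes from subtracting the backward Duhamel formulas of two solutions. In fact the proposition fails on all of $C_\gamma^-$: a small time-independent $\Phi$ with $\theta\in H_\alpha\setminus H_{\alpha+\frac12}$ can make $\|\widetilde{\mathbf{G}}(\Phi)\|_{\mathbf{X}_\alpha}$ infinite. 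The paper's own proof applies it to arbitrary elements and passes over this.

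Your repair does not work, for two reasons.
\begin{itemize}
\item Condition \eqref{houmianxuyao0119} bounds each $\theta_i$ separately. It gives no control of $\|\theta_1-\theta_2\|_{H_{\alpha+\frac12}}$ by $\|\Phi_1-\Phi_2\|_{\mathbf{X}_\alpha}$, which is exactly what $\|I_2\|_{H_\alpha}$ needs; interpolation yields only a H\"older bound. So there is still no Lipschitz estimate in $\mathbf{X}_\alpha$ on your subset.
\item Invariance fails as stated. $\Psi=\mathbf{F}(\xi,\Phi)=(\mathbf{v},\vartheta)$ solves $\partial_t\vartheta=\Delta\vartheta+Rv_2-\rho\bigl(\|\Phi\|_{\mathbf{X}_\alpha}/r\bigr)\mathbf{u}\cdot\nabla\theta$. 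The analogue of \eqref{xuyao0119} therefore bounds $\vartheta$ by $\mathbf{v}$ \emph{and} $\mathbf{u}$, and \eqref{houmianxuyao0119} is not reproduced.
\end{itemize}

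\textbf{The missing idea} is the opposite of your remark that no smoothing is needed. The derivative lost in $\mathbf{u}\cdot\nabla\theta$ must be recovered from $T(t)$, but only for forcing in the temperature component, and your block analysis already contains this.
\begin{itemize}
\item For $\alpha>\frac12$ one has $\|\mathbf{u}\cdot\nabla\theta\|_{H_{\alpha-\frac12}}\le C\|\mathbf{u}\|_{\mathbf{H}_\alpha}\|\theta\|_{H_\alpha}$. This is a two-dimensional product estimate, compatible with the boundary conditions since $\mathbf{u}\cdot\nabla\theta=0$ on $x_2=0,1$. Hence $\Phi\mapsto\rho\bigl(\|\Phi\|_{\mathbf{X}_\alpha}/r\bigr)\mathbf{u}\cdot\nabla\theta$ is globally Lipschitz from $\mathbf{X}_\alpha$ into $H_{\alpha-\frac12}$ with constant $O(r)$, for arbitrary $\Phi$ and pointwise in $t$.
\item Let $A_{kl}$ be the normalized block. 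The $\theta\theta$-entry of $e^{A_{kl}t}$ is $(1+o(1))e^{\lambda^{2}_{k,l}t}+O(r_{kl}^{-4})e^{\lambda^{1}_{k,l}t}$ with $\lambda^{2}_{k,l}\le-\frac{V_a+r_{kl}^2}{2}$. The $\mathbf{u}\theta$-entry is $O(r_{kl}^{-2})$.
\item Hence $\|T(t)\mathbf{P}_s(\mathbf{0},g)\|_{\mathbf{X}_\alpha}\le C(1+t^{-\frac12})e^{-\gamma_0 t}\|g\|_{H_{\alpha-\frac12}}$, and $\mathbf{P}_c$ is harmless.
\end{itemize}
With the integrable kernel $(t-\tau)^{-\frac12}$, your weighted integrals go through. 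They give $\mathcal{K}=O(r)$ for all $\Phi_1,\Phi_2\in C_\gamma^-$, without Proposition~\ref{babay0120} and without any $H_{\frac32}$ bound.
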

\begin{proof}
Indeed, we have 
\begin{align}\label{buyaomimang0119}
\begin{aligned}
e^{\gamma t}\left\|\mathbf{F}\left(\xi,\Phi\left(t\right)\right)\right\|_{\mathbf{X}_{\alpha}}
\leq& 
e^{\gamma t}\bigg{[}e^{\lambda_{0}t}
\left\|\mathbf{P}_{c}\xi\right\|_{\mathbf{X}_{\alpha}}
+\int_{t}^{0}e^{\lambda_{0}\left(t-\tau\right)}e^{-\gamma \tau}d\tau ~C\left(r\right) \sup\limits_{t\leq 0}e^{\gamma t}\left\|\Phi\right\|_{\mathbf{X}_{\alpha}}
\\
&+
\int_{-\infty}^{t}
e^{-\gamma_{0}\left(t-\tau\right)}e^{-\gamma \tau} d\tau~C\left(r\right)\sup\limits_{t\leq 0}e^{\gamma t}\left\|\Phi\right\|_{\mathbf{X}_{\alpha}}
\bigg{]}<+\infty.
\end{aligned}
\end{align} 

Subsequently, we verify that \(\mathbf{F}\) is a contraction map. 

For any \(\Phi_{1}\) and \(\Phi_{2}\in C_{\gamma}^{-}\), a direct computation gives that 
\begin{align}\label{daizhe0120}
\begin{aligned}
&\left\|\mathbf{F}\left(\xi,\Phi_{1}\right)-\mathbf{F}\left(\xi,\Phi_{2}\right)\right\|_{\mathbf{X}_{\alpha}}
\leq 
\int_{t}^{0}e^{\lambda_{0}\left(t-\tau\right)}
\left\|\widetilde{\mathbf{G}}\left(r,\Phi_{1}\right)-\widetilde{\mathbf{G}}\left(r,\Phi_{2}\right)\right\|_{\mathbf{X}_{\alpha}}d\tau
\\
&+
\int_{-\infty}^{t}e^{-\gamma_{0}\left(t-\tau\right)}
\left\|\widetilde{\mathbf{G}}\left(r,\Phi_{1}\right)-\widetilde{\mathbf{G}}\left(r,\Phi_{2}\right)\right\|_{\mathbf{X}_{\alpha}}d\tau
\\
&
\leq\widetilde{C}\left(r\right)
\left[\int_{t}^{0}e^{\lambda_{0}\left(t-\tau\right)}e^{-r\tau}d\tau+\int_{-\infty}^{t}e^{-\gamma_{0}\left(t-\tau\right)}e^{-\gamma \tau}d\tau\right]\left\|\Phi_{1}-\Phi_{2}\right\|_{C_{\gamma}^{-}},
\end{aligned}
\end{align}
which yields that 
\begin{align}\label{xuyaoni1215}
\left\|\mathbf{F}\left(\xi,\Phi_{1}\right)-\mathbf{F}\left(\xi,\Phi_{2}\right)\right\|_{C_{\gamma}^{-}}\leq \mathcal{K}
\left\|\Phi_{1}-\Phi_{2}\right\|_{C_{\gamma}^{-}},
\end{align}
where $\mathcal{K}=\widetilde{C}\left(r\right)\left(\frac{1}{\lambda_{0}+\gamma}+\frac{1}{\gamma_{0}-\gamma}\right)$.
Thus, from the Proposition \ref{babay0120}, as \(r\) is small enough,  \(\mathbf{F}\) is a contraction map.
\end{proof}

Since $
\mathbf{F}\left(\xi,\cdot\right)
:C_{\gamma}^{-}\rightarrow C_{\gamma}^{-}
$ is a contraction map, there exists a fixed point $\Phi\left(t;R,\xi\right)$ for any $\xi\in \mathbf{X}_{\alpha}$. Let $\xi\in\mathbf{X}_{c}$, then 
there exists a $\Phi\left(t;R,\xi\right)$ such that 

\begin{align*}
\begin{aligned}
\Phi\left(t;R,\xi\right)
=&T\left(t\right)\xi 
-\int_{t}^{0}T\left(t-\tau\right)
\mathbf{P}_{c}\widetilde{\mathbf{G}}\left(r,\Phi\left(\tau;R,\xi\right)\right)
d\tau
\\
&+\int_{-\infty}^{t}T\left(t-\tau\right)\mathbf{P}_{s}\widetilde{\mathbf{G}}\left(r,\Phi\left(\tau;R,\xi\right)\right)d\tau.
\end{aligned}
\end{align*}
Subsequently, let $t=0$ in the above equality, then we can define a map $h\left(R,\cdot\right)=h\left(\cdot\right):\mathbf{X}_{c}\rightarrow\overline{\mathbf{X}}_{s}$ 
as follows 
\begin{align}\label{hdedansheng1215}
\begin{aligned}
\Phi_{0}=\Phi\left(0;R,\xi\right)&=\xi+\int_{-\infty}^{0}T\left(-\tau\right)\mathbf{P}_{s}\widetilde{\mathbf{G}}\left(r,\Phi\left(\tau;R,\xi\right)\right)d\tau
\\
&=\xi+h\left(R,\xi\right).
\end{aligned}
\end{align}

Thus, $M\left(R\right)$ can be expressed in form of 
\begin{align}\label{xuzai1215}
M\left(R\right)=\left\{\Phi_{0}\in\mathbf{X}_{\alpha}|\Phi_{0}=\xi+h\left(\xi\right),~\xi\in\mathbf{X}_{c}\right\}.
\end{align}
Regarding the property of $h\left(\xi\right)$, we have the following lemma.
\begin{lemma}\label{xiang1215}
$h\left(\xi\right)$ is Lipschitz continuous. That is, 
$$
\left\|h\left(\xi_{1}\right)-h\left(\xi_{2}\right)\right\|_{\mathbf{X}_{\alpha}}\leq C\left\|\xi_{1}-\xi_{2}\right\|_{\mathbf{X}_{\alpha}},
$$
where $C=\frac{\widetilde{C}\left(r\right)}{\gamma_{0}-\gamma}/\left(1-\frac{\widetilde{C}\left(r\right)}{\lambda_{0}+\gamma}-\frac{\widetilde{C}\left(r\right)}{\gamma_{0}-\gamma}\right)>0.$
\end{lemma}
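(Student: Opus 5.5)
The plan is to compare the two fixed points $\Phi_{i}(t)=\Phi(t;R,\xi_{i})$, $i=1,2$, of the contraction maps $\mathbf{F}(\xi_{i},\cdot)$ given by Theorem \ref{yashuo1010}. Their difference will be estimated in the weighted norm $\|\cdot\|_{C_{\gamma}^{-}}$, and the bound will then be evaluated at $t=0$ through the representation \eqref{hdedansheng1215}. Since $\Phi_{i}=\mathbf{F}(\xi_{i},\Phi_{i})$, subtracting the two identities gives
\begin{align*}
\Phi_{1}(t)-\Phi_{2}(t)=T(t)(\xi_{1}-\xi_{2})-\int_{t}^{0}T(t-\tau)\mathbf{P}_{c}\big[\widetilde{\mathbf{G}}(r,\Phi_{1})-\widetilde{\mathbf{G}}(r,\Phi_{2})\big]d\tau+\int_{-\infty}^{t}T(t-\tau)\mathbf{P}_{s}\big[\widetilde{\mathbf{G}}(r,\Phi_{1})-\widetilde{\mathbf{G}}(r,\Phi_{2})\big]d\tau .
\end{align*}
Because $\xi_{1}-\xi_{2}\in\mathbf{X}_{c}$, which is finite dimensional and spanned by the first eigenvectors, we have $e^{\gamma t}\|T(t)(\xi_{1}-\xi_{2})\|_{\mathbf{X}_{\alpha}}=e^{(\gamma+\lambda_{0})t}\|\xi_{1}-\xi_{2}\|_{\mathbf{X}_{\alpha}}\le\|\xi_{1}-\xi_{2}\|_{\mathbf{X}_{\alpha}}$ for $t\le0$, since $\gamma>|\lambda_{0}|$.

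The two integral terms are handled exactly as in \eqref{daizhe0120}. We use the bound $e^{\lambda_{0}(t-\tau)}$ on $\mathbf{X}_{c}$ and $e^{-\gamma_{0}(t-\tau)}$ on $\mathbf{X}_{s}$ from \eqref{zuida1212}, together with the weighted Lipschitz estimate of Proposition \ref{babay0120}. This gives
\begin{align*}
\|\Phi_{1}-\Phi_{2}\|_{C_{\gamma}^{-}}\le\|\xi_{1}-\xi_{2}\|_{\mathbf{X}_{\alpha}}+\widetilde{C}(r)\Big(\frac{1}{\lambda_{0}+\gamma}+\frac{1}{\gamma_{0}-\gamma}\Big)\|\Phi_{1}-\Phi_{2}\|_{C_{\gamma}^{-}}.
\end{align*}
For $r$ small the bracketed constant is less than $1$, and absorbing it yields
\[
\|\Phi_{1}-\Phi_{2}\|_{C_{\gamma}^{-}}\le\Big(1-\tfrac{\widetilde{C}(r)}{\lambda_{0}+\gamma}-\tfrac{\widetilde{C}(r)}{\gamma_{0}-\gamma}\Big)^{-1}\|\xi_{1}-\xi_{2}\|_{\mathbf{X}_{\alpha}}.
\]

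Finally, by \eqref{hdedansheng1215},
\[
h(\xi_{1})-h(\xi_{2})=\int_{-\infty}^{0}T(-\tau)\mathbf{P}_{s}\big[\widetilde{\mathbf{G}}(r,\Phi_{1}(\tau))-\widetilde{\mathbf{G}}(r,\Phi_{2}(\tau))\big]d\tau .
\]
We estimate this by $\int_{-\infty}^{0}e^{\gamma_{0}\tau}e^{-\gamma\tau}d\tau\,\widetilde{C}(r)\|\Phi_{1}-\Phi_{2}\|_{C_{\gamma}^{-}}=\frac{\widetilde{C}(r)}{\gamma_{0}-\gamma}\|\Phi_{1}-\Phi_{2}\|_{C_{\gamma}^{-}}$. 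Combined with the previous bound, this gives the stated constant $C$.

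The main obstacle is that Proposition \ref{babay0120} is only a sup-in-time estimate, whereas here it must be used inside time integrals. We apply it in the form $\|\widetilde{\mathbf{G}}(\Phi_{1}(\tau))-\widetilde{\mathbf{G}}(\Phi_{2}(\tau))\|_{\mathbf{X}_{\alpha}}\le\widetilde{C}(r)e^{-\gamma\tau}\|\Phi_{1}-\Phi_{2}\|_{C_{\gamma}^{-}}$, which is exactly what its left-hand side controls. Behind this sits the fact that both fixed points are genuine mild solutions in $C_{\gamma}^{-}$ (Proposition \ref{bushouqingxu0117}). Consequently the smoothing bounds \eqref{yexu0625} and \eqref{houmianxuyao0119} on $\theta_{i}$ in $H_{\frac32}$ apply to them, and this is what makes Proposition \ref{babay0120} available.
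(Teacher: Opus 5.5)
Your proposal is correct and follows essentially the same route as the paper. You subtract the two fixed-point identities $\Phi_i=\mathbf{F}(\xi_i,\Phi_i)$, bound the center and stable integrals with Proposition \ref{babay0120} to get the factor $\widetilde{C}(r)\bigl(\frac{1}{\lambda_0+\gamma}+\frac{1}{\gamma_0-\gamma}\bigr)=\frac{(\lambda_0+\gamma_0)\widetilde{C}(r)}{(\lambda_0+\gamma)(\gamma_0-\gamma)}$, absorb it for small $r$, and substitute into $\|h(\xi_1)-h(\xi_2)\|_{\mathbf{X}_\alpha}\le\frac{\widetilde{C}(r)}{\gamma_0-\gamma}\|\Phi_1-\Phi_2\|_{C_\gamma^-}$, exactly as the paper does. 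One harmless slip: when $\mathbf{X}_c$ is two-dimensional with unequal critical eigenvalues, $e^{\gamma t}\|T(t)(\xi_1-\xi_2)\|_{\mathbf{X}_\alpha}$ is only bounded by (not equal to) $e^{(\gamma+\lambda_0)t}\|\xi_1-\xi_2\|_{\mathbf{X}_\alpha}$, with $\lambda_0$ the smaller of the two, and this changes nothing in the argument.
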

\begin{proof}
For any $\xi_{1}$ and $\xi_{2}\in\mathbf{X}_{c}$, we 
\begin{align*}
\begin{aligned}
    &h\left(\xi_{1}\right)=\int_{-\infty}^{0}T\left(-\tau\right)
    \mathbf{P}_{s}\widetilde{\mathbf{G}}\left(r,\Phi_{1}\left(\tau,R,\xi_{1}\right)\right)d\tau,
    \\
    &h\left(\xi_{2}\right)=\int_{-\infty}^{0}T\left(-\tau\right)
    \mathbf{P}_{s}\widetilde{\mathbf{G}}\left(r,\Phi_{2}\left(\tau,R,\xi_{2}\right)\right)d\tau,
    \end{aligned}
\end{align*}
where $\Phi_{1}\left(t,R,\xi_{1}\right)=\mathbf{F}\left(\xi_{1},\Phi_{1}\left(t,R,\xi_{1}\right)\right)$ and 
$\Phi_{2}\left(t,R,\xi_{2}\right)=\mathbf{F}\left(\xi_{2},\Phi_{2}\left(t,R,\xi_{2}\right)\right)$. Thus, a direct computation with the Proposition \ref{babay0120} gives that 
\begin{align}\label{pa1215}
\left\|h\left(\xi_{1}\right)-h\left(\xi_{2}\right)\right\|_{\mathbf{X}_{\alpha}}\leq \frac{\widetilde{C}\left(r\right)}{\gamma_{0}-\gamma}\sup\limits_{t\leq 0}e^{\gamma t}\left\|\Phi_{1}\left(t;R,\xi_{1}\right)-\Phi_{2}\left(t;R,\xi_{2}\right)\right\|_{\mathbf{X}_{\alpha}}.
\end{align}
In addition, we have 
\begin{align*}
\begin{aligned}
e^{\gamma t}&\left\|\Phi_{1}\left(t;R,\xi_{1}\right)-\Phi_{2}\left(t;R,\xi_{2}\right)\right\|_{\mathbf{X}_{\alpha}}=e^{\gamma t}\left\|
\mathbf{F}\left(\xi_{1},\Phi_{1}\left(t;R,\xi_{1}\right)\right)-\mathbf{F}\left(\xi_{2},\Phi_{2}\left(t;R,\xi_{2}\right)\right)\right\|_{\mathbf{X}_{\alpha}}
\\
&
\leq e^{\left(\gamma+\lambda_{0}\right)t}\left\|\xi_{1}-\xi_{2}\right\|_{\mathbf{X}_{\alpha}}
+\frac{\left(\lambda_{0}+\gamma_{0}\right)\widetilde{C}\left(r\right)}{\left(\lambda_{0}+\gamma\right)\left(\gamma_{0}-\gamma\right)}
\sup\limits_{t\leq 0}e^{\gamma t}\left\|\Phi_{1}\left(t;R,\xi_{1}\right)-\Phi_{2}\left(t;R,\xi_{2}\right)\right\|_{\mathbf{X}_{\alpha}},
\end{aligned}
\end{align*}
where the expression of $\mathbf{F}\left(\xi,\Phi\right)$ and the Proposition \ref{babay0120} are used. And as $r$ is small sufficiently, we can obtain 
\begin{align*}
\sup\limits_{t\leq 0}e^{\gamma t}\left\|\Phi_{1}\left(t;R,\xi_{1}\right)-\Phi_{2}\left(t;R,\xi_{2}\right)\right\|_{\mathbf{X}_{\alpha}}\leq 
\left(1-\frac{\left(\lambda_{0}+\gamma_{0}\right)\widetilde{C}\left(r\right)}{\left(\lambda_{0}+\gamma\right)\left(\gamma_{0}-\gamma\right)}\right)^{-1}\left\|\xi_{1}-\xi_{2}\right\|_{\mathbf{X}_{\alpha}},
\end{align*}
which together with \eqref{pa1215} yields that 
$$
\left\|h\left(\xi_{1}\right)-h\left(\xi_{2}\right)\right\|_{\mathbf{X}_{\alpha}}\leq C\left\|\xi_{1}-\xi_{2}\right\|_{\mathbf{X}_{\alpha}}.
$$
\end{proof}

From $h\left(\xi\right)=\int_{-\infty}^{0}T\left(-\tau\right)\mathbf{P}_{s}\widetilde{\mathbf{G}}\left(r,\Phi\left(\tau;R,\xi\right)\right)d\tau$ and Lemma \ref{xiang1215}, one can deduce that 
\begin{align}\label{xuyao1222}
    h\left(0\right)=0,  D_\xi h\left(0\right)=\mathbf{0}.
\end{align}
\subsection{The attracting property of locally invariant manifold}\label{xinzeng0719}
Next, we show that the solution $\Phi\left(t\right)$ to the system \eqref{suanzixing1010} with initial value $\Phi\left(0\right)=\left(\mathbf{u}_{0},\theta_{0}\right)\in\mathbf{X}_{\alpha}$ will tend to the invariant manifold exponentially as $t\rightarrow +\infty$. To this end, we deduce the following two estimates.
\begin{lemma}\label{xuyaode1231}
For the solution $\Phi\left(t\right)=\left(\mathbf{u}\left(t\right),\theta\left(t\right)\right)$ to the system \eqref{suanzixing1010} with initial value $\Phi\left(0\right)=\left(\mathbf{u}_{0},\theta_{0}\right)\in\mathbf{X}_{\alpha}$,
as $\left\|\mathbf{\Phi}\left(t\right)\right\|_{\mathbf{X}_{\alpha}}\leq 2r$ for any $t\ge0$, we have
\begin{align}\label{shide0105}
\begin{aligned}
\left\|\theta\left(t\right)\right\|_{H_{1}}\leq Cr+Ct^{\alpha-1}e^{-\frac{\pi^2 t}{2}},~\left\|\theta\right\|_{H_{\alpha+\frac{1}{2}}}\leq Cr+Ct^{-\frac{1}{2}}e^{-\frac{\pi^2t}{2}},~\forall~t>0,
        \end{aligned}
\end{align}
where $C\left(R,\Omega,\left\|\theta_{0}\right\|_{H_{\alpha}}\right)>0$ is a constant depending on $R$, $\Omega$ and $\left\|\theta_{0}\right\|_{H_{\alpha}}$. Furthermore, for the solution $\widetilde{\Phi}\left(t\right)=\left(\widetilde{\mathbf{u}}\left(t\right),\widetilde{\theta}\left(t\right)\right)$ to the system \eqref{suanzixing1010} with initial value $\widetilde{\Phi}\left(0\right)=\left(\widetilde{\mathbf{u}}_{0},\widetilde{\theta}_{0}\right)\in\mathbf{X}_{\alpha}$, as $\left\|\widetilde{\mathbf{\Phi}}\left(t\right)\right\|_{\mathbf{X}_{\alpha}}\leq 2 r$ for any $t> 0$, we have 
\begin{align}\label{xuehui0104}
\begin{aligned}
    \left\|\theta\left(t\right)-\widetilde{\theta}\left(t\right)\right\|_{H_{\alpha+\frac{1}{2}}}
    \leq &Cr+Ct^{-\frac{1}{2}}e^{-\frac{\pi^2 t}{2}}
    ,
\end{aligned}
\end{align}
where $C=C\left(\Omega,\left\|\theta_{0}\right\|_{H_{\alpha}},\left\|\widetilde{\theta}_{0}\right\|_{H_{\alpha}},R\right)>0$.
\end{lemma}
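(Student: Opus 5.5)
We will work with the Duhamel formula for the temperature component, the second line of \eqref{jiedebiaoshi0111}, now on the forward interval $[0,t]$ with $t_{0}=0$:
\begin{align*}
\theta(t)=e^{t\Delta}\theta_{0}+\int_{0}^{t}e^{(t-\tau)\Delta}\Big[Ru_{2}(\tau)-\rho\Big(\tfrac{\|\Phi(\tau)\|_{\mathbf{X}_{\alpha}}}{r}\Big)\mathbf{u}(\tau)\cdot\nabla\theta(\tau)\Big]d\tau .
\end{align*}
The first term produces the transient part of each bound. The integral term produces the $Cr$ part, since $\|\mathbf{u}\|_{\mathbf{H}_{\alpha}}\le 2r$ for all $t\ge0$. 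The spectral gap of the Dirichlet–Neumann Laplacian (first eigenvalue $\pi^{2}$) gives the standard analytic-semigroup smoothing estimate
$$\|e^{t\Delta}f\|_{H_{\beta}}\le C t^{-(\beta-\delta)}e^{-\frac{\pi^{2}}{2}t}\|f\|_{H_{\delta}},\qquad \beta\ge\delta .$$
This is the same estimate used in \eqref{yexu0625}.

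\textbf{The $H_{1}$ and $H_{\alpha+\frac12}$ bounds.} For the free term, the smoothing estimate from $H_{\alpha}$ gives $\|e^{t\Delta}\theta_{0}\|_{H_{1}}\le Ct^{\alpha-1}e^{-\pi^{2}t/2}\|\theta_{0}\|_{H_{\alpha}}$ and $\|e^{t\Delta}\theta_{0}\|_{H_{\alpha+\frac12}}\le Ct^{-\frac12}e^{-\pi^{2}t/2}\|\theta_{0}\|_{H_{\alpha}}$. These are exactly the singular terms in \eqref{shide0105}.

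For the linear forcing, $\|u_{2}\|_{H_{\alpha}}\le 2r$ gives contributions $CR\,r\int_{0}^{\infty}s^{\alpha-1}e^{-\pi^{2}s/2}ds$ and $CR\,r\int_{0}^{\infty}s^{-1/2}e^{-\pi^{2}s/2}ds$. Both are of size $Cr$.

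The nonlinear forcing is the main obstacle, because $\|\mathbf{u}\cdot\nabla\theta\|$ requires a norm of $\theta$ higher than the $H_{\alpha}$ control supplied by $\|\Phi\|_{\mathbf{X}_{\alpha}}\le 2r$. I will close this by a bootstrap in a weighted sup-norm. Set
$$Y(t)=\sup_{0<s\le t}\frac{\|\theta(s)\|_{H_{\alpha+\frac12}}}{1+s^{-1/2}} .$$
Since $\alpha>\tfrac12$, $H_{\alpha}\subset L^{\infty}$ in two dimensions, so
$$\|\mathbf{u}\cdot\nabla\theta\|_{H_{0}}\le C\|\mathbf{u}\|_{\mathbf{H}_{\alpha}}\|\theta\|_{H_{\frac12}}\le Cr\cdot r ,$$
which is harmless. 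To reach $H_{\alpha+\frac12}$, however, I will estimate the forcing in $H_{\alpha-\frac12}$ (or $H_{\frac12}$) using the product estimate
$$\|\mathbf{u}\cdot\nabla\theta\|_{H_{\alpha-\frac12}}\le C\|\mathbf{u}\|_{\mathbf{H}_{\alpha}}\|\theta\|_{H_{\alpha+\frac12}}\le Cr\,(1+\tau^{-1/2})Y(t) .$$
Inserting this into the smoothing estimate gives kernel integrals $\int_{0}^{t}(t-\tau)^{-1/2}(1+\tau^{-1/2})e^{-\pi^{2}(t-\tau)/2}d\tau$. These are bounded uniformly in $t$, with a Beta-function bound near $\tau=0$. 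We therefore get
$$Y(t)\le C\|\theta_{0}\|_{H_{\alpha}}+Cr+CrY(t) .$$
For $r$ small this yields $Y\le C$. Feeding $Y\le C$ back in, the nonlinear contribution is $Cr$ for $t\ge1$. For $t<1$ it is absorbed into the $Ct^{-1/2}$ term, which dominates on that range. The $H_{1}$ bound follows the same way from the forcing in $H_{\alpha-\frac12}$ with kernel $(t-\tau)^{\alpha-\frac32}$, which is integrable since $\alpha>\tfrac12$.

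The delicate point in this step is that $e^{-\pi^{2}t/2}$ in the transient cannot be recovered from the Duhamel integral for large $t$. That is why the statement has the additive form $Cr+Ct^{-1/2}e^{-\pi^{2}t/2}$ rather than a purely decaying bound, and why only smallness of the $H_{\alpha}$ data is used.

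\textbf{The difference estimate \eqref{xuehui0104}.} This follows from the triangle inequality,
$$\|\theta-\widetilde\theta\|_{H_{\alpha+\frac12}}\le\|\theta\|_{H_{\alpha+\frac12}}+\|\widetilde\theta\|_{H_{\alpha+\frac12}},$$
applying the first part to each solution. The constant $C$ then depends on both $\|\theta_{0}\|_{H_{\alpha}}$ and $\|\widetilde\theta_{0}\|_{H_{\alpha}}$, which matches the stated dependence. Should a sharper version be needed later, for example in Lemma \ref{xuyaode0116}, the same bootstrap can be run on $\theta-\widetilde\theta$ by splitting $\mathbf{u}\cdot\nabla\theta-\widetilde{\mathbf{u}}\cdot\nabla\widetilde\theta$ as in the $I_{1},I_{2},I_{3}$ decomposition of Proposition \ref{babay0120}. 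For the present lemma, however, the triangle inequality suffices.
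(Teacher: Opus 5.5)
\textbf{The key step fails as written.} The problem is in the step you single out as the main obstacle. With the smoothing estimate you quote, passing from $H_{\alpha-\frac12}$ to $H_{\alpha+\frac12}$ costs $(t-\tau)^{-1}$, not $(t-\tau)^{-\frac12}$. Since $\int_0^t (t-\tau)^{-1}d\tau$ diverges, the product estimate $\|\mathbf{u}\cdot\nabla\theta\|_{H_{\alpha-\frac12}}\le C\|\mathbf{u}\|_{\mathbf{H}_\alpha}\|\theta\|_{H_{\alpha+\frac12}}$ cannot close an $H_{\alpha+\frac12}$ bound.

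The kernel you actually wrote corresponds to placing the forcing in $H_\alpha$, i.e.\ $\|\mathbf{u}\cdot\nabla\theta\|_{H_\alpha}\le C\|\mathbf{u}\|_{\mathbf{H}_\alpha}\|\theta\|_{H_{\alpha+\frac12}}$. Alternatively you can use $H_{\frac12}$, with kernel $(t-\tau)^{-\alpha}$. With either correction your kernel integrals are indeed uniformly bounded.

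\textbf{Missing justification for the absorption.} The step $Y\le C+Cr+CrY$ presupposes $Y(t)<\infty$, that is, $\|\theta(s)\|_{H_{\alpha+\frac12}}=O(s^{-1/2})$ as $s\to 0^+$. Theorem \ref{dingxia1002} gives no such rate near $s=0$ when $\theta_0$ is only in $H_\alpha$. As it stands, this step needs a separate approximation or continuity argument.

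\textbf{The paper's route avoids both issues.} The bootstrap is unnecessary, because the first rung needs only the $H_\alpha$ control of $\theta$ already contained in the hypothesis $\|\Phi\|_{\mathbf{X}_\alpha}\le 2r$.
\begin{itemize}
\item The paper estimates $\|\mathbf{u}\cdot\nabla\theta\|_{H_{\alpha-\frac12}}\le C\|\mathbf{u}\|_{\mathbf{H}_\alpha}\|\theta\|_{H_\alpha}\le Cr^2$. It then smooths to $H_1$ with the integrable kernel $(t-\tau)^{\alpha-\frac32}$, giving $\|\theta(t)\|_{H_1}\le Ct^{\alpha-1}e^{-\frac{\pi^2t}{2}}+Cr$ outright.
\item It then uses $\|\mathbf{u}\cdot\nabla\theta\|_{H_{\frac12}}\le C\|\mathbf{u}\|_{\mathbf{H}_\alpha}\|\theta\|_{H_1}$ with kernel $(t-\tau)^{-\alpha}$ and inserts the $H_1$ bound just obtained. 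The transient part contributes $Cr\,e^{-\frac{\pi^2t}{2}}\int_0^t(t-\tau)^{-\alpha}\tau^{\alpha-1}d\tau=Cr\,e^{-\frac{\pi^2t}{2}}B(1-\alpha,\alpha)$.
\end{itemize}
This two-rung ladder $H_\alpha\to H_1\to H_{\alpha+\frac12}$ is explicit. It needs no a priori finiteness of a weighted norm and no absorption, so no smallness of $r$ is used in this lemma. Your claim that the nonlinear term ``requires a norm of $\theta$ higher than the $H_\alpha$ control'' is true only for the second rung, and there the first rung supplies that norm.

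\textbf{The difference estimate.} Your treatment of \eqref{xuehui0104} via the triangle inequality is correct and simpler than the paper's ``in a similar way.'' The stated bound is of the form $Cr+Ct^{-\frac12}e^{-\frac{\pi^2t}{2}}$, not a Lipschitz-type bound, so applying the first part to each solution suffices.
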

\begin{proof}
We have obtained that 
\begin{align}\label{mohuang1231}
\begin{aligned}
\theta\left(t\right)=e^{t\Delta}\theta_{0}+R\int_{0}^{t}e^{\left(t-\tau\right)\Delta}u_{2}\left(\tau\right)d\tau-\int_{0}^{t}e^{\left(t-\tau\right)\Delta}\mathbf{u}\cdot\nabla\theta d\tau,
\end{aligned}
\end{align}
which together with $\left\|\Phi\left(t\right)\right\|_{\mathbf{X}_{\alpha}}\leq 2r$, we have 
\begin{align}\label{buyong0105}
\begin{aligned}
\left\|\theta\left(t\right)\right\|_{H_{1}}&\leq C t^{\alpha-1}e^{-\frac{\pi^2 t}{2}}+Cr\int_{0}^{t}\left(t-\tau\right)^{\alpha-1}e^{-\frac{\pi^2}{2}\left(t-\tau\right)}d\tau 
\\
&~~~~+Cr\int_{0}^{t}\left(t-\tau\right)^{\alpha-\frac{3}{2}}e^{-\frac{\pi^2}{2}\left(t-\tau\right)}\left\|\theta\left(\tau\right)\right\|_{H_{\alpha}}d\tau
\\
&\leq Ct^{\alpha-1}e^{-\frac{\pi^2 t}{2}}+Cr,~\forall~t>0.
\end{aligned}
\end{align}


Finally, we have
\begin{align*}
    \begin{aligned}
\left\|\theta\left(t\right)\right\|_{H_{\alpha+\frac{1}{2}}}&\leq Ct^{-\frac{1}{2}}e^{-\frac{\pi^2 t}{2}}
+Cr\int_{0}^{t}\left(t-\tau\right)^{-\frac{1}{2}}e^{-\frac{\pi^2\left(t-\tau\right)}{2}}d\tau +Cr\int_{0}^{t}\left(t-\tau\right)^{-\alpha}e^{-\frac{\pi^2\left(t-\tau\right)}{2}}\left\|\theta\left(\tau\right)\right\|_{H_{1}}d\tau
\\
&\leq Cr+Ct^{-\frac{1}{2}}e^{-\frac{\pi^2 t}{2}},
    \end{aligned}
\end{align*}
where \eqref{buyong0105} is used.
 Thus, \eqref{shide0105} is proved.

Subsequently, we consider $\left\|\theta\left(t\right)-\widetilde{\theta}\left(t\right)\right\|_{H_{\alpha+\frac{1}{2}}}$ when $\left\|\mathbf{\Phi}\right\|_{\mathbf{X}_{\alpha}}\leq 2r$ and $\left\|\mathbf{\widetilde{\Phi}}\right\|_{\mathbf{X}_{\alpha}}\leq 2r$. We have 
\begin{align*}
    \begin{aligned}
        \theta\left(t\right)-\widetilde{\theta}\left(t\right)=&e^{t\Delta}\left(\theta_{0}-\widetilde{\theta}_{0}\right)
        +R\int_{0}^{t}e^{\left(t-\tau\right)\Delta}\left(u_{2}\left(\tau\right)-\widetilde{u}_{2}\left(\tau\right)\right)d\tau
\\
&-\int_{0}^{t}e^{\left(t-\tau\right)\Delta}\left(\mathbf{u}\left(\tau\right)-\widetilde{\mathbf{u}}\left(\tau\right)\right)\cdot\nabla\theta\left(\tau\right) d\tau
\\
&-\int_{0}^{t}e^{\left(t-\tau\right)\Delta}\widetilde{\mathbf{u}}\left(\tau\right)\cdot\nabla\left(\theta\left(\tau\right)-\widetilde{\theta}\left(\tau\right)\right)d\tau,
    \end{aligned}
\end{align*}

 which together with $\left\|\mathbf{\Phi}\right\|_{\mathbf{X}_{\alpha}}\leq 2r$ and $\left\|\mathbf{\widetilde{\Phi}}\right\|_{\mathbf{X}_{\alpha}}\leq 2r$ yields  \eqref{xuehui0104} in a similar way.

\end{proof}

\begin{lemma}\label{xuyaode0116}
For the solution $\Phi\left(t\right)=\left(\mathbf{u}\left(t\right),\theta\left(t\right)\right)$ and $\widetilde{\Phi}\left(t\right)=\left(\mathbf{\widetilde{u}}\left(t\right),\widetilde{\theta}\left(t\right)\right)$ to the system \eqref{suanzixing1010} with initial value $\Phi\left(0\right)=\left(\mathbf{u}_{0},\theta_{0}\right)\in\mathbf{X}_{\alpha}$ and $\widetilde{\Phi}\left(0\right)=\left(\mathbf{\widetilde{u}}_{0},\widetilde{\theta}_{0}\right)\in\mathbf{X}_{\alpha}$ ,
as $\left\|\mathbf{\Phi}\left(t\right)\right\|_{\mathbf{X}_{\alpha}}\leq 2r$ and $\left\|\mathbf{\widetilde{\Phi}}\left(t\right)\right\|_{\mathbf{X}_{\alpha}}\leq 2r$ for any $t\ge0$, we have
\begin{align}\label{dierge01166}
    \begin{aligned}
    &\begin{aligned}
        \left\|\theta\left(t\right)-\widetilde{\theta}\left(t\right)\right\|_{H_{1}}
        \leq& Ct^{\alpha-1}e^{-\frac{\pi^2 t}{2}}
        +R\int_{0}^{t}\left(t-\tau\right)^{\alpha-1}e^{-\frac{\pi^2\left(t-\tau\right)}{2}}\left\|\mathbf{u}-\widetilde{\mathbf{u}}\right\|_{\mathbf{H}_{\alpha}}d\tau
        \\
        &+Cr\int_{0}^{t}\left(t-\tau\right)^{\alpha-\frac{3}{2}}e^{-\frac{\pi^2\left(t-\tau\right)}{2}}\left\|\mathbf{u}-\widetilde{\mathbf{u}}\right\|_{\mathbf{H}_{\alpha}}d\tau
        \\
        &+Cr\int_{0}^{t}\left(t-\tau\right)^{\alpha-\frac{3}{2}}e^{-\frac{\pi^2\left(t-\tau\right)}{2}}\left\|\theta-\widetilde{\theta}\right\|_{H_{\alpha}}d\tau,
    \end{aligned}
    \\
    & \begin{aligned}
        ||\theta\left(t\right)-\widetilde{\theta}&\left(t\right)||_{H_{\alpha+\frac{1}{2}}}
        \leq  C \left(t^{-\frac{1}{2}}+1\right)e^{-\frac{\pi^2 t
        }{2}}+C\int_{0}^{t}\left(t-\tau\right)^{-\frac{1}{2}}e^{-\frac{\pi^2\left(t-\tau\right)}{2}}\left\|\mathbf{u}-\widetilde{\mathbf{u}}\right\|_{\mathbf{H}_{\alpha}}d\tau 
        \\
        &
        +Cr\int_{0}^{t}e^{-\frac{\pi^2\left(t-\tau\right)}{2}}\left\|\mathbf{u}-\widetilde{\mathbf{u}}\right\|_{\mathbf{H}_{\alpha}}d\tau 
        +Cr
        \int_{0}^{t}\left(t-\tau\right)^{-\frac{1}{2}}e^{-\frac{\pi^2 \left(t-\tau\right)}{2}}
        \left\|\mathbf{u}-\widetilde{\mathbf{u}}\right\|_{\mathbf{H}_{\alpha}}d\tau
        \\
        &+Cr\int_{0}^{t}\left(t-\tau\right)^{-\frac{1}{2}}
        e^{-\frac{\pi^2\left(t-\tau\right)}{2}}\left\|\theta-\widetilde{\theta}\right\|_{H_{\alpha}}d\tau,
    \end{aligned}
    \end{aligned}
\end{align}
where $C=C\left(\Omega,\left\|\theta_{0}\right\|_{H_{\alpha}},\left\|\widetilde{\theta}_{0}\right\|_{H_{\alpha}},R\right)>0$.
\end{lemma}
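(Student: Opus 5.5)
We prove Lemma \ref{xuyaode0116} from the Duhamel representation of the temperature difference. This is the same formula used in the proof of Lemma \ref{xuyaode1231}:
\begin{align*}
\theta(t)-\widetilde{\theta}(t)=&e^{t\Delta}(\theta_{0}-\widetilde{\theta}_{0})+R\int_{0}^{t}e^{(t-\tau)\Delta}(u_{2}-\widetilde{u}_{2})d\tau
-\int_{0}^{t}e^{(t-\tau)\Delta}\big(\rho\,\mathbf{u}\cdot\nabla\theta-\widetilde{\rho}\,\widetilde{\mathbf{u}}\cdot\nabla\widetilde{\theta}\big)d\tau,
\end{align*}
where $\rho,\widetilde{\rho}$ are the cut-off factors evaluated at $\Phi,\widetilde{\Phi}$. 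As in Proposition \ref{babay0120}, we split the nonlinear difference into three pieces:
\[
I_{1}=(\mathbf{u}-\widetilde{\mathbf{u}})\cdot\nabla\theta,\qquad
I_{2}=\widetilde{\mathbf{u}}\cdot\nabla(\theta-\widetilde{\theta}),\qquad
I_{3}=(\rho-\widetilde{\rho})\,\widetilde{\mathbf{u}}\cdot\nabla\widetilde{\theta}.
\]
Each piece will be estimated with the analytic smoothing bound $\|e^{s\Delta}f\|_{H_{a}}\le Cs^{-(a-b)}e^{-\frac{\pi^{2}}{2}s}\|f\|_{H_{b}}$, with the pair $(a,b)$ chosen according to the target norm.

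For the $H_{1}$ estimate we proceed term by term. The initial term gives $Ct^{\alpha-1}e^{-\pi^{2}t/2}\|\theta_{0}-\widetilde{\theta}_{0}\|_{H_{\alpha}}$, and this constant is absorbed into $C$. The linear term uses $(a,b)=(1,\alpha)$ together with $\|u_{2}-\widetilde{u}_{2}\|_{H_{\alpha}}\le\|\mathbf{u}-\widetilde{\mathbf{u}}\|_{\mathbf{H}_{\alpha}}$, which gives the kernel $(t-\tau)^{\alpha-1}$. For the nonlinear pieces we take $(a,b)=(1,\alpha-\frac12)$, producing the kernel $(t-\tau)^{\alpha-\frac32}$, which is integrable since $\alpha>\frac12$. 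The required product estimates come from Sobolev multiplication in two dimensions with $\alpha>\frac12$, so that $\mathbf{H}_{\alpha}\hookrightarrow L^{\infty}$ up to borderline:
\[
\|I_{1}\|_{H_{\alpha-\frac12}}\le C\|\mathbf{u}-\widetilde{\mathbf{u}}\|_{\mathbf{H}_{\alpha}}\|\theta\|_{H_{\alpha+\frac12}},\qquad
\|I_{2}\|_{H_{\alpha-\frac12}}\le C\|\widetilde{\mathbf{u}}\|_{\mathbf{H}_{\alpha}}\|\theta-\widetilde{\theta}\|_{H_{\alpha}}\le Cr\|\theta-\widetilde{\theta}\|_{H_{\alpha}}.
\]
The second bound is delicate, since $\nabla(\theta-\widetilde{\theta})$ lies only in $H_{\alpha-\frac12}$; I would use the divergence form $\widetilde{\mathbf{u}}\cdot\nabla g=\nabla\cdot(\widetilde{\mathbf{u}}g)$ together with $e^{s\Delta}\nabla\cdot$ smoothing to justify it. In $I_{1}$ the factor $\|\theta\|_{H_{\alpha+\frac12}}$ is bounded by $Cr+C\tau^{-1/2}e^{-\pi^{2}\tau/2}$ via Lemma \ref{xuyaode1231}. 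The $r$-part gives the stated $Cr$ term. The singular part near $\tau=0$ only produces a harmless extra contribution, controlled because $\|\mathbf{u}-\widetilde{\mathbf{u}}\|_{\mathbf{H}_{\alpha}}\le4r$ and the convolution of $\tau^{-1/2}$ with $(t-\tau)^{\alpha-3/2}$ is finite and decays; this contribution is absorbed into $Ct^{\alpha-1}e^{-\pi^{2}t/2}$ after adjusting $C$. For $I_{3}$ we use $|\rho-\widetilde{\rho}|\le\frac{2}{r}\|\Phi-\widetilde{\Phi}\|_{\mathbf{X}_{\alpha}}$, and the factor $\frac1r$ cancels against $\|\widetilde{\mathbf{u}}\|_{\mathbf{H}_{\alpha}}\le2r$, leaving $C\|\widetilde{\theta}\|_{H_{\alpha+\frac12}}\|\Phi-\widetilde{\Phi}\|_{\mathbf{X}_{\alpha}}$. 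Its $\theta$-component is the $\|\theta-\widetilde{\theta}\|_{H_{\alpha}}$ term; the $\widetilde{\theta}$ factor is again handled by Lemma \ref{xuyaode1231}.

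The $H_{\alpha+\frac12}$ estimate repeats the same argument with different smoothing pairs. The initial term gives $Ct^{-1/2}e^{-\pi^{2}t/2}$. The linear term uses $(a,b)=(\alpha+\frac12,\alpha)$, which gives the kernel $(t-\tau)^{-1/2}$. For the nonlinear terms we measure in $H_{\alpha-\frac12}$ again, getting $(t-\tau)^{-1/2}$ and factors $Cr$. Alternatively, for $I_{1}$ we measure in $H_{\alpha}$ using $\|\theta\|_{H_{1}}$-type bounds, which yields the unweighted kernel $e^{-\pi^{2}(t-\tau)/2}$ appearing in the statement. The additive $C\,e^{-\pi^{2}t/2}$ (the "$+1$") collects the bounded contributions from the singular parts of Lemma \ref{xuyaode1231}.

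The main obstacle is the product estimate for $I_{2}$: it must lose only half a derivative on $\theta-\widetilde{\theta}$ so that the right-hand side involves $\|\theta-\widetilde{\theta}\|_{H_{\alpha}}$, with the kernel singularity still integrable. A secondary point is keeping the singular-in-time factors from Lemma \ref{xuyaode1231} integrable against the convolution kernels; a Beta-function computation handles this.
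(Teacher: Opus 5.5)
The first ($H_{1}$) inequality in your proposal is fine and follows the paper's route. For $I_{1}$ you don't need Lemma \ref{xuyaode1231}, since $\|\theta\|_{H_{\alpha}}\le 2r$ already gives $\|I_{1}\|_{H_{\alpha-\frac12}}\le Cr\|\mathbf{u}-\widetilde{\mathbf{u}}\|_{\mathbf{H}_{\alpha}}$. The $I_{2}$ bound is also not delicate: it is the standard multiplier estimate $H^{2\alpha}\cdot H^{2\alpha-1}\subset H^{2\alpha-1}$ in two dimensions, which is what the paper uses.

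\textbf{The gap is in the second ($H_{\alpha+\frac12}$) inequality.} By your own smoothing rule, the pair $(a,b)=(\alpha+\frac12,\alpha-\frac12)$ gives the kernel $(t-\tau)^{-1}$, not $(t-\tau)^{-1/2}$. That kernel is not integrable at $\tau=t$. So ``measure the nonlinear terms in $H_{\alpha-\frac12}$ again'' fails. For $I_{2}=\widetilde{\mathbf{u}}\cdot\nabla(\theta-\widetilde{\theta})$ this cannot be fixed within your scheme. If the only norm of $\theta-\widetilde{\theta}$ you allow on the right is $H_{\alpha}$, then $H_{\alpha-\frac12}$ is the best space for $I_{2}$. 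Measuring it in $H_{\alpha}$ instead would need $\|\theta-\widetilde{\theta}\|_{H_{\alpha+\frac12}}$, which is circular.

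\textbf{The missing idea is the paper's bootstrap through the first inequality.} Estimate $I_{2}$ in $H_{\frac12}$ by $Cr\|\theta-\widetilde{\theta}\|_{H_{1}}$; this costs only the integrable kernel $(t-\tau)^{-\alpha}$. Then substitute the already-proved $H_{1}$ bound into that term and use
\[
\int_{s}^{t}(t-\tau)^{-\alpha}(\tau-s)^{\alpha-1}d\tau=B(1-\alpha,\alpha),\qquad
\int_{s}^{t}(t-\tau)^{-\alpha}(\tau-s)^{\alpha-\frac32}d\tau=B(1-\alpha,\alpha-\tfrac12)(t-s)^{-\frac12}.
\]
These identities produce exactly the structure of the statement:
\begin{itemize}
\item the $R\int(\tau-s)^{\alpha-1}\|\mathbf{u}-\widetilde{\mathbf{u}}\|_{\mathbf{H}_{\alpha}}$ term becomes the unweighted-kernel term $Cr\int_{0}^{t}e^{-\frac{\pi^{2}(t-\tau)}{2}}\|\mathbf{u}-\widetilde{\mathbf{u}}\|_{\mathbf{H}_{\alpha}}d\tau$;
\item the term $C\tau^{\alpha-1}e^{-\frac{\pi^{2}\tau}{2}}$ becomes (part of) the ``$+1$'' in $C(t^{-\frac12}+1)e^{-\frac{\pi^{2}t}{2}}$;
\item the two $(\tau-s)^{\alpha-\frac32}$ terms become the last two $(t-\tau)^{-\frac12}$ terms.
\end{itemize}

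Your explanation of the unweighted kernel, measuring $I_{1}$ in $H_{\alpha}$ via $\|\theta\|_{H_{1}}$-type bounds, does not work. $\theta\in H_{1}$ only places $I_{1}$ in $H_{\frac12}$, and no direct smoothing estimate gives a nonsingular kernel. For $I_{1}$, and for your cut-off term $I_{3}$, in the second inequality, measure in $H_{\alpha}$ using $\|\theta\|_{H_{\alpha+\frac12}},\|\widetilde{\theta}\|_{H_{\alpha+\frac12}}\le Cr+C\tau^{-\frac12}e^{-\frac{\pi^{2}\tau}{2}}$ from Lemma \ref{xuyaode1231}. This gives the kernel $(t-\tau)^{-\frac12}$. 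The singular part contributes only $Cre^{-\frac{\pi^{2}t}{2}}$, because $\int_{0}^{t}(t-\tau)^{-\frac12}\tau^{-\frac12}d\tau=\pi$ and $\|\mathbf{u}-\widetilde{\mathbf{u}}\|_{\mathbf{H}_{\alpha}}\le 4r$.
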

\begin{proof}

    Since 
    \begin{align*}
    \begin{aligned}
        \theta\left(t\right)-\widetilde{\theta}\left(t\right)=&e^{t\Delta}\left(\theta_{0}-\widetilde{\theta}_{0}\right)
        +R\int_{0}^{t}e^{\left(t-\tau\right)\Delta}\left(u_{2}\left(\tau\right)-\widetilde{u}_{2}\left(\tau\right)\right)d\tau
\\
&-\int_{0}^{t}e^{\left(t-\tau\right)\Delta}\left(\mathbf{u}\left(\tau\right)-\widetilde{\mathbf{u}}\left(\tau\right)\right)\cdot\nabla\theta\left(\tau\right) d\tau
\\
&-\int_{0}^{t}e^{\left(t-\tau\right)\Delta}\widetilde{\mathbf{u}}\left(\tau\right)\cdot\nabla\left(\theta\left(\tau\right)-\widetilde{\theta}\left(\tau\right)\right)d\tau,
    \end{aligned}
\end{align*}

then, using $\left\|\mathbf{\Phi}\left(t\right)\right\|_{\mathbf{X}_{\alpha}}\leq 2r$ and $\left\|\mathbf{\widetilde{\Phi}}\left(t\right)\right\|_{\mathbf{X}_{\alpha}}\leq 2r$, we have 
\begin{align}\label{disange0116}
    \begin{aligned}
        \left\|\theta\left(t\right)-\widetilde{\theta}\left(t\right)\right\|_{H_{1}}
        \leq &C t^{\alpha-1}e^{-\frac{\pi^2 t}{2}}+R\int_{0}^{t}\left(t-\tau\right)^{\alpha-1}e^{-\frac{\pi^2\left(t-\tau\right)}{2}}\left\|\mathbf{u}-\widetilde{\mathbf{u}}\right\|_{\mathbf{H}_{\alpha}}d\tau
        \\
        &+C\int_{0}^{t}\left(t-\tau\right)^{\alpha-\frac{3}{2}}e^{-\frac{\pi^2\left(t-\tau\right)}{2}}\left\|\left(\mathbf{u}-\mathbf{\widetilde{u}}\right)\cdot\nabla\theta\right\|_{H_{\alpha-\frac{1}{2}}}d\tau
        \\
        &+Cr\int_{0}^{t}\left(t-\tau\right)^{\alpha-\frac{3}{2}}e^{-\frac{\pi^2\left(t-\tau\right)}{2}}\left\|\theta-\widetilde{\theta}\right\|_{H_{\alpha}}d\tau
        \\
        \leq& Ct^{\alpha-1}e^{-\frac{\pi^2 t}{2}}
        +R\int_{0}^{t}\left(t-\tau\right)^{\alpha-1}e^{-\frac{\pi^2\left(t-\tau\right)}{2}}\left\|\mathbf{u}-\widetilde{\mathbf{u}}\right\|_{\mathbf{H}_{\alpha}}d\tau
        \\
        &+Cr\int_{0}^{t}\left(t-\tau\right)^{\alpha-\frac{3}{2}}e^{-\frac{\pi^2\left(t-\tau\right)}{2}}\left\|\mathbf{u}-\widetilde{\mathbf{u}}\right\|_{\mathbf{H}_{\alpha}}d\tau
        \\
        &+Cr\int_{0}^{t}\left(t-\tau\right)^{\alpha-\frac{3}{2}}e^{-\frac{\pi^2\left(t-\tau\right)}{2}}\left\|\theta-\widetilde{\theta}\right\|_{H_{\alpha}}d\tau.
    \end{aligned}
\end{align}

Furthermore, we have 
\begin{align}\label{jiushizheyang0116}
    \begin{aligned}
        \left\|\theta\left(t\right)-\widetilde{\theta}\left(t\right)\right\|_{H_{\alpha+\frac{1}{2}}}\leq &C t^{-\frac{1}{2}}e^{-\frac{\pi^2 t
        }{2}}+R\int_{0}^{t}\left(t-\tau\right)^{-\frac{1}{2}}e^{-\frac{\pi^2\left(t-\tau\right)}{2}}\left\|\mathbf{u}-\widetilde{\mathbf{u}}\right\|_{\mathbf{H}_{\alpha}}d\tau 
        \\
        &+C\int_{0}^{t}\left(t-\tau\right)^{-\frac{1}{2}}e^{-\frac{\pi^2\left(t-\tau\right)}{2}}\left\|\left(\mathbf{u}-\widetilde{\mathbf{u}}\right)\cdot\nabla\theta\right\|_{H_{\alpha}}d\tau 
        \\
        &+Cr\int_{0}^{t}\left(t-\tau\right)^{-\alpha}e^{-\frac{\pi^2\left(t-\tau\right)}{2}}\left\|\theta-\widetilde{\theta}\right\|_{H_{1}}d\tau
        \\
        \leq & C \left(t^{-\frac{1}{2}}+1\right)e^{-\frac{\pi^2 t
        }{2}}+C\int_{0}^{t}\left(t-\tau\right)^{-\frac{1}{2}}e^{-\frac{\pi^2\left(t-\tau\right)}{2}}\left\|\mathbf{u}-\widetilde{\mathbf{u}}\right\|_{\mathbf{H}_{\alpha}}d\tau 
        \\
        &
        +Cr\int_{0}^{t}e^{-\frac{\pi^2\left(t-\tau\right)}{2}}\left\|\mathbf{u}-\widetilde{\mathbf{u}}\right\|_{\mathbf{H}_{\alpha}}d\tau 
        \\
        &+Cr
        \int_{0}^{t}\left(t-\tau\right)^{-\frac{1}{2}}e^{-\frac{\pi^2 \left(t-\tau\right)}{2}}
        \left\|\mathbf{u}-\widetilde{\mathbf{u}}\right\|_{\mathbf{H}_{\alpha}}d\tau
        \\
        &+Cr\int_{0}^{t}\left(t-\tau\right)^{-\frac{1}{2}}
        e^{-\frac{\pi^2\left(t-\tau\right)}{2}}\left\|\theta-\widetilde{\theta}\right\|_{H_{\alpha}}d\tau,
    \end{aligned}
\end{align}
where \eqref{disange0116} are used.
\end{proof}

Next, we provide the proof for \autoref{bijin0727}.
\begin{proof} \textup{(}proof for \autoref{bijin0727}\textup{)}
    Without loss generality, let $t_{0}=0$ in \eqref{suanzixing1010} and $\Phi_{0}=\xi+\eta,$ where $\xi\in\mathbf{X}_{c}$ and $\eta\in \mathbf{X}_{s}$. Then, we can obtain 
    \begin{align}\label{yixiazi1221}
    \begin{aligned}
\Phi_{c}\left(t\right)=T\left(t\right)\xi+\int_{0}^{t}T\left(t-\tau\right)\mathbf{P}_{c}\widetilde{\mathbf{G}}\left(\Phi_{c}\left(\tau\right)+\Phi_{s}\left(\tau\right)\right)d\tau,
\\
\Phi_{s}\left(t\right)=T\left(t\right)\eta+\int_{0}^{t}T\left(t-\tau\right)\mathbf{P}_{s}\widetilde{\mathbf{G}}\left(\Phi_{c}\left(\tau\right)+\Phi_{s}\left(\tau\right)\right)d\tau.
    \end{aligned}
    \end{align}
In addition, for any $\xi\in\mathbf{X}_{c}$, there exists a $\Phi\left(t;\xi\right)\in C_{\gamma}^{-}$ such that 
\begin{align}\label{jiushi1221}
    \Phi\left(t;\xi\right)=T\left(t\right)\xi-\int_{t}^{0}T\left(t-\tau\right)\mathbf{P}_{c}\widetilde{\mathbf{G}}\left(\Phi\left(\tau;\xi\right)\right)d\tau+
    \int_{-\infty}^{t}T\left(t-\tau\right)\mathbf{P}_{s}\widetilde{\mathbf{G}}\left(\Phi\left(\tau;\xi\right)\right)d\tau.
\end{align}
And $\Phi\left(t;\xi\right)\in M\left(R\right)$ for any $t>0$. Then, according to \eqref{xuzai1215}, we have 
\begin{align*}
\Phi\left(t;\xi\right)=\mathbf{P}_{c}\Phi\left(t;\xi\right)+h\left(\mathbf{P}_{c}\Phi\left(t;\xi\right)\right),
\end{align*}
which together with \eqref{jiushi1221} yields that 
\begin{align}\label{mei1221}
\begin{aligned}
h\left(\mathbf{P}_{c}\Phi\left(t;\xi\right)\right)&=\int_{-\infty}^{t}T\left(t-\tau\right)\mathbf{P}_{s}\widetilde{\mathbf{G}}\left(\Phi\left(\tau;\xi\right)\right)d\tau
\\
&=
\int_{-\infty}^{t}T\left(t-\tau\right)\mathbf{P}_{s}\widetilde{\mathbf{G}}\left(\mathbf{P}_{c}\Phi\left(\tau;\xi\right)+h\left(\mathbf{P}_{c}\Phi\left(\tau;\xi\right)\right)\right)d\tau
\end{aligned}
\end{align}
Thus, 
\begin{align}\label{meizi1221}
\begin{aligned}
h\left(\Phi_{c}\left(t\right)\right)=
\int_{-\infty}^{t}T\left(t-\tau\right)\mathbf{P}_{s}\widetilde{\mathbf{G}}\left(\Phi_{c}\left(\tau\right)+h\left(\Phi_{c}\left(\tau\right)\right)\right)d\tau.
\end{aligned}
\end{align}
Let $v\left(t\right)=\Phi_{s}\left(t\right)-h\left(\Phi_{c}\left(t\right)\right)$, then a direct computation gives that 
\begin{align}\label{beiai1222}
\begin{aligned}
    v\left(t\right)
    =I_{1}+I_{2}+I_{3},
\end{aligned}
\end{align}
where 
\begin{align*}
    \begin{aligned}
        &I_{1}=T\left(t\right)\eta,~I_{2}=-\int_{-\infty}^{0}T\left(t-\tau\right)\mathbf{P}_{s}\widetilde{\mathbf{G}}\left(\Phi_{c}\left(\tau\right)+h\left(\Phi_{c}\left(\tau\right)\right)\right)d\tau,
        \\
        &I_{3}=\int_{0}^{t}T\left(t-\tau\right)\left[\mathbf{P}_{s}\widetilde{\mathbf{G}}\left(\Phi_{c}\left(\tau\right)+\Phi_{s}\left(\tau\right)\right)-\mathbf{P}_{s}\widetilde{\mathbf{G}}\left(\Phi_{c}\left(\tau\right)+h\left(\Phi_{c}\left(\tau\right)\right)\right)\right]d\tau.
    \end{aligned}
\end{align*}
First, we prove that $\left\|v\left(t\right)\right\|_{\mathbf{X}_{\alpha}}$ is bounded for any $t\geq 0$.  Assume that $\Phi_{c}\left(t\right)+\Phi_{s}\left(t\right)=\left(\mathbf{u}\left(t\right),\theta\left(t\right)\right)^{T}$ and $\Phi_{c}\left(t\right)+h\left(\Phi_{c}\left(t\right)\right)=\left(\widetilde{\mathbf{u}}\left(t\right),\widetilde{\theta}\left(t\right)\right)^{T}$. In addition, $\left\|\mathbf{\Phi}\left(t\right)\right\|_{\mathbf{X}_{\alpha}}\leq 2r$ and $\left\|\mathbf{\widetilde{\Phi}}\left(t\right)\right\|_{\mathbf{X}_{\alpha}}\leq 2r$ for any $t\geq 0$.

For $I_{1}$ and $I_{2}$, we have 
\begin{align}\label{yige1222}
\begin{aligned}
    &\left\|I_{1}\right\|_{\mathbf{X}_{\alpha}}\leq e^{-\gamma_{0}t}\left\|\Phi_{0}\right\|_{\mathbf{X}_{\alpha}},
    \\
    &
    \begin{aligned}\left\|I_{2}\right\|_{\mathbf{X}_{\alpha}}&\leq \int_{-\infty}^{0}\left\|T\left(t-\tau\right)\mathbf{P}_{s}\widetilde{\mathbf{G}}\left(\Phi_{c}\left(\tau\right)+h\left(\Phi_{c}\left(\tau\right)\right)\right)\right\|_{\mathbf{X}_{\alpha}}
    d\tau
    \\
    &\leq \frac{\widetilde{C}\left(r\right)e^{-\gamma_{0}t}\left\|\Phi_{c}\left(t\right)+h\left(\Phi_{c}\left(t\right)\right)\right\|_{C_{\gamma}^{-}}}{\gamma_{0}-\gamma}.
    \end{aligned}
\end{aligned}
\end{align}
For $I_{3}$, a direct computation gives that 
\begin{align}\label{liaoliao0116}
    \begin{aligned}
        \left\|\mathbf{u}\cdot\nabla\theta-\mathbf{\widetilde{u}}\cdot\nabla\widetilde{\theta}\right\|_{H_{\alpha}}&\leq \left\|\left(\mathbf{u}-\mathbf{\widetilde{u}}\right)\cdot\nabla\theta\right\|_{H_{\alpha}}+\left\|\widetilde{\mathbf{u}}\cdot\nabla\left(\theta-\widetilde{\theta}\right)\right\|_{H_{\alpha}}
        \\
        &\leq Cr\left(\left\|\theta\right\|_{H_{\alpha+\frac{1}{2}}}+\left\|\theta-\widetilde{\theta}\right\|_{H_{\alpha+\frac{1}{2}}}
    \right)
    \\
    &\leq Ct^{-\frac{1}{2}}e^{-\frac{\pi^2 t}{2}}+Cr,
    \end{aligned}
\end{align}
where \eqref{shide0105} and \eqref{xuehui0104} are used. Then,
\begin{align}\label{yong1222}
    \begin{aligned}
        \left\|I_{3}\right\|_{\mathbf{X}_{\alpha}}&\leq\int_{0}^{t}\left\|
        T\left(t-\tau\right)\left[\widetilde{\mathbf{G}}\left(\Phi_{c}\left(\tau\right)+\Phi_{s}\left(\tau\right)\right)-\widetilde{\mathbf{G}}\left(\Phi_{c}\left(\tau\right)+h\left(\Phi_{c}\left(\tau\right)\right)\right)\right]\right\|_{\mathbf{X}_{\alpha}}d\tau
        \\
        &\leq\int_{0}^{t}\left\|
        e^{-\pi^2\left(t-\tau\right)}\left[\mathbf{u}\cdot\nabla\theta-\widetilde{\mathbf{u}}\cdot\nabla\widetilde{\theta}\right]\right\|_{H_{\alpha}}d\tau
        \\
        &\leq C
        \int_{0}^{t}e^{-\pi^2\left(t-\tau\right)}\left(\tau^{-\frac{1}{2}}e^{-\frac{\pi^2\tau}{2}}+r\right)d\tau\leq C.
    \end{aligned}
\end{align}

Then from \eqref{beiai1222}-\eqref{yong1222}, we have 
\begin{align}\label{faxian1222}
\begin{aligned}
\left\|v\left(t\right)\right\|_{\mathbf{X}_{\alpha}}\leq C,~\forall ~t\geq 0.
\end{aligned}
\end{align}
Notice that when $t\rightarrow+\infty$, $\left\|v\left(t\right)\right\|_{\mathbf{X}_{\alpha}}$ can be very small since $r$ is small sufficiently.

Subsequently, it is our turn to consider the tendency of $\left\|v\left(t\right)\right\|_{\mathbf{X}_{\alpha}}$ as $t\rightarrow+\infty$ for the case when $\left\|\mathbf{\Phi}\right\|_{\mathbf{X}_{\alpha}}\leq 2r$ and $\left\|\widetilde{\mathbf{\Phi}}\right\|_{\mathbf{X}_{\alpha}}\leq 2r$. 

A direct computation gives that 
\begin{align}\label{jiushi0165}
    \begin{aligned}
        \left\|\mathbf{u}\cdot\nabla\theta-\widetilde{\mathbf{u}}\cdot\nabla\widetilde{\theta}\right\|_{H_{\alpha}}\leq 
        C\left[\left\|\theta\right\|_{H_{\alpha+\frac{1}{2}}}\left\|\mathbf{u}-\widetilde{\mathbf{u}}\right\|_{\mathbf{H}_{\alpha}}+r\left\|\theta-\widetilde{\theta}\right\|_{H_{\alpha+\frac{1}{2}}}\right].
    \end{aligned}
\end{align}
Thus, 
\begin{align}\label{xunzhao0116}
    \begin{aligned}
        \left\|I_{3}\right\|_{\mathbf{X}_{\alpha}}
        \leq C\int_{0}^{t}
        e^{-\pi^2\left(t-\tau\right)}\left[\left\|\theta\right\|_{H_{\alpha+\frac{1}{2}}}\left\|\mathbf{u}-\widetilde{\mathbf{u}}\right\|_{\mathbf{H}_{\alpha}}+r\left\|\theta-\widetilde{\theta}\right\|_{H_{\alpha+\frac{1}{2}}}\right]d\tau.
    \end{aligned}
\end{align}
Then, according to \eqref{shide0105} and $\eqref{dierge01166}_{2}$, we have 
\begin{align*}
\begin{aligned}
     &\left\|I_{3}\right\|_{\mathbf{X}_{\alpha}}\leq 
     Cr\int_{0}^{t}e^{-\pi^2\left(t-\tau\right)}\left\|v\left(\tau\right)\right\|_{\mathbf{X}_{\alpha}}d\tau+Ce^{-\pi^2 t}\int_{0}^{t}\tau^{-\frac{1}{2}}\left\|v\left(\tau\right)\right\|_{\mathbf{X}_{\alpha}}d\tau+Cr e^{-\frac{\pi^2 t}{2}}
\\
&+Cr\int_{0}^{t}\left(t-\tau\right)^{-\frac{1}{2}}e^{-\frac{\pi^2\left(t-\tau\right)}{2}}\left\|v\left(\tau\right)\right\|_{\mathbf{X}_{\alpha}}d\tau
+Cr\int_{0}^{t}\left(t-\tau\right)e^{-\frac{\pi^2\left(t-\tau\right)}{2}}\left\|v\left(\tau\right)\right\|_{\mathbf{X}_{\alpha}}d\tau
\end{aligned}
\end{align*}
which together with \eqref{beiai1222} and \eqref{yige1222} yields that 
\begin{align*}
    \begin{aligned}
        &\left\|v\left(t\right)\right\|_{\mathbf{X}_{\alpha}}
        \leq 
     Cr\int_{0}^{t}e^{-\pi^2\left(t-\tau\right)}\left\|v\left(\tau\right)\right\|_{\mathbf{X}_{\alpha}}d\tau+Ce^{-\pi^2 t}\int_{0}^{t}\tau^{-\frac{1}{2}}\left\|v\left(\tau\right)\right\|_{\mathbf{X}_{\alpha}}d\tau+C e^{-\gamma_{0}t}
\\
&+Cr\int_{0}^{t}\left(t-\tau\right)^{\frac{1}{2}}e^{-\frac{\pi^2\left(t-\tau\right)}{2}}\left\|v\left(\tau\right)\right\|_{\mathbf{X}_{\alpha}}d\tau
+Cr\int_{0}^{t}\left(t-\tau\right)e^{-\frac{\pi^2\left(t-\tau\right)}{2}}\left\|v\left(\tau\right)\right\|_{\mathbf{X}_{\alpha}}d\tau.
    \end{aligned}
\end{align*}
Since $\left\|v\right\|_{\mathbf{X}_{\alpha}}$ is bounded and $t^{q}e^{-bt}$ is bounded on $\left[0,+\infty\right)$ for any $q>0$ and $b>0$, we deduce 
\begin{align*}
    \begin{aligned}
        \left\|v\left(t\right)\right\|_{\mathbf{X}_{\alpha}}
        \leq Ce^{-\gamma_{0} t}+
        Cr\int_{0}^t e^{-\gamma_{0}\left(t-\tau\right)}\left\|v\left(\tau\right)\right\|_{\mathbf{X}_{\alpha}}d\tau\leq Ce^{-\gamma_{0}t}+Cr.
\end{aligned}
\end{align*}
Putting $\left\|v\left(t\right)\right\|_{\mathbf{X}_{\alpha}}\leq Ce^{-\gamma_{0}t}+Cr$ into $\left\|v\left(t\right)\right\|_{\mathbf{X}_{\alpha}}
        \leq Ce^{-\gamma_{0} t}+
        Cr\int_{0}^t e^{-\gamma_{0}\left(t-\tau\right)}\left\|v\left(\tau\right)\right\|_{\mathbf{X}_{\alpha}}d\tau$, we have 
\begin{align}\label{fanfu0627}
\begin{aligned}
    \left\|v\left(t\right)\right\|_{\mathbf{X}_{\alpha}}
        \leq C e^{-\gamma_{0}t}+C^2 r e^{-\frac{\gamma_{0}t}{2}}+C^2 r^2,
\end{aligned}
\end{align}
which together with $\left\|v\left(t\right)\right\|_{\mathbf{X}_{\alpha}}
        \leq Ce^{-\gamma_{0} t}+
        Cr\int_{0}^t e^{-\gamma_{0}\left(t-\tau\right)}\left\|v\left(\tau\right)\right\|_{\mathbf{X}_{\alpha}}d\tau$ yields that 
\begin{align}\label{fanfanfufu0627}
    \begin{aligned}
        \left\|v\right\|_{\mathbf{X}_{\alpha}}
        \leq C e^{-\gamma_{0}t}+C^2 r e^{-\frac{\gamma_{0}t}{2}}+C^3 r^3.
    \end{aligned}
\end{align}
Since $r$ can be sufficiently small, it follows from the iterative method applied to $\left\|v\left(t\right)\right\|_{\mathbf{X}_{\alpha}}
        \leq Ce^{-\gamma_{0} t}+
        Cr\int_{0}^t e^{-\gamma_{0}\left(t-\tau\right)}\left\|v\left(\tau\right)\right\|_{\mathbf{X}_{\alpha}}d\tau$ that 
$$\left\|v\right\|_{\mathbf{X}_{\alpha}}\leq C e^{-\frac{\gamma_{0}t}{2}}.$$
Thus, one can conclude that        
there exists a positive constant $\alpha^{*}$ such that 
$$
\lim\limits_{t\rightarrow+\infty}
e^{\alpha^{*}t}\left\|\Phi_{s}\left(t\right)-h\left(\Phi_{c}\left(t\right)\right)\right\|_{\mathbf{X}_{\alpha}}=0.
$$
\end{proof}


\subsection{The approximation formula for local invariant manifold}\label{jinsi0205}
From the preceding investigation, we have established the existence of a local invariant manifold for system \eqref{model}–\eqref{bianjian0814} when the velocity field $\left\|\mathbf{\Phi}\right\|_{\mathbf{X}_{\alpha}}$ is sufficiently small. Furthermore, by \autoref{zhongxinliuxing07276}, for sufficiently large time, the dynamical behavior of small solutions to system \eqref{model}–\eqref{bianjian0814} can be viewed as evolving on an invariant manifold. Consequently, deriving an approximation formula for the local invariant manifold is of great importance. Subsequently, we will give the approximation formula for $h\left(\xi\right)$ in two scenarios: (1) $\text{Card}(B)=1$ and (2) $\text{Card}(B)=2$. That is, $\xi=z\Phi_{k_{0},1}^{1}$ or $\xi=z_{1}\Phi_{k_{0},1}^{1}+z_{2}\Phi_{k_{0}+1,1}^{1}$.

First, for the solution $\Phi\left(t\right)$ to system \eqref{model}–\eqref{bianjian0814} on invariant manifold, it can be rewritten in forms 
\begin{align*}    \Phi\left(t\right)=\sum\limits_{i=1}^{m}q_{i}\left(t\right)\Phi_{k_{i},1}^{i}+\Psi_{2}\left(t\right)=\Psi_{1}+h\left(\Psi_{1}\right),~\text{where}~\Psi_{1}\in\mathbf{X}_{c}~\text{and}~\Psi_{2}\in\mathbf{X}_{s}.
\end{align*}
By \eqref{xuyao1222}, $h\left(\xi\right)$ is a higher-term of $\xi$, as $\xi\rightarrow 0$. Thus, for the following system with small solution
\begin{align}\label{lailai0209}
    \begin{cases}
        \frac{d\Psi_{1}}{dt}=\mathbf{P}_{c}\mathbf{L}\Psi_{1}+\mathbf{P}_{c}\widetilde{\mathbf{G}}\left(\Psi_{1}+h\left(\Psi_{1}\right)\right),
        \\
        \frac{d\Psi_{2}}{dt}=\mathbf{P}_{s}\mathbf{L}\Psi_{2}+\mathbf{P}_{s}\widetilde{\mathbf{G}}\left(\Psi_{1}+h\left(\Psi_{1}\right)\right),
        \\
        \Psi_{1}\left(0\right)=\sum\limits_{i=1}^{m}z_{i}\Phi_{k_{i},1}^{1},~\Psi_{2}\left(0\right)=h\left(\Psi_{1}\left(0\right)\right),
    \end{cases}
\end{align}
we obtain
\begin{align*}
    \Phi\left(t\right)\approx\Psi_{1}\left(t\right)=\sum\limits_{i=1}^{m}z_{i}e^{\lambda_{k_{i},1}^{1}t}\Phi_{k_{i},1}^{1}+o.
\end{align*}
Thus, from the expression of $h\left(\xi\right)$ as in \eqref{hdedansheng1215}, for system \eqref{lailai0209}, the approximate formula   
\begin{align}\label{bijin0209}
    \begin{aligned}
        h\left(\sum\limits_{i=1}^{m}z_{i}\Phi_{k_{i},1}^{1}\right)&=\int_{-\infty}^{0}T\left(-\tau\right)\mathbf{P}_{s}\widetilde{\mathbf{G}}\left(\Phi\left(\tau\right)\right)d\tau
        \\
        &\approx \int_{-\infty}^{0}T\left(-\tau\right)\mathbf{P}_{s}\widetilde{\mathbf{G}}\left(\sum\limits_{i=1}^{m}z_{i}e^{\lambda_{k_{i},1}^{1}\tau}\Phi_{k_{i},1}^{1}\right)d\tau+o(2).
        \end{aligned}
\end{align}

We now derive the approximate formulas for invariant manifolds corresponding to two cases: $m=1$ and $m=2$. 

(1) When $m=1$, $\left(\mathbf{u},\theta\right)^{T}=ze^{\lambda_{k_{0},1}^{1}t}\Phi_{k_{0},{1}}^{1}=\frac{2ze^{\lambda_{k_{0},1}^{1}t}}{\sqrt{\left[1+\left(u_{k_{0},1}^1\right)^2\gamma_{k_{0}1}^2\right]L}}
\begin{pmatrix}
    u_{k_{0},1}^{1}\pi \sin{\frac{k_{0}\pi x_{1}}{L}}\cos{\pi x_{2}}
    \\
    -u_{k_{0},1}^{1}\frac{k_{0}\pi}{L} \cos{\frac{k_{0}\pi x_{1}}{L}}\sin{\pi x_{2}}
    \\
    \cos{\frac{k_{0}\pi x_{1}}{L}}\sin{\pi x_{2}}
\end{pmatrix}$,
$$
\widetilde{\mathbf{G}}\left(ze^{\lambda_{k_{0},1}^{1}t}\Phi_{k_{0},{1}}^{1}\right)=-\mathbf{u}\cdot\nabla\theta=G_{1}e^{2\lambda_{k_{0},1}^{1}t}z^2\begin{pmatrix}
  0
  \\
  0
  \\
\sin{2\pi x_{2}}\end{pmatrix},
$$
where $G_{1}=\frac{2k_{0}\pi^2 u_{k_{0},1}^1}{L^2\left(1+\left(u_{k_{0},1}^{1}\right)^2\gamma_{k_{0},1}^2\right)}$. Then, from \eqref{bijin0209}, it follows that 
\begin{align}\label{gongshi0209}
    \begin{aligned}
        h\left(z \Phi_{k_{0},1}^{1}\right)\approx\frac{G_{1}z^2\sqrt{L}}{\sqrt{2}}\Phi_{0,2}\int_{-\infty}^{0}e^{\left(2\lambda_{k_{0},1}^{1}+4\pi^2\right)\tau}d\tau=\frac{G_{1}z^2\sqrt{L}}{\sqrt{2}\left(2\lambda_{k_{0},1}^{1}+4\pi^2\right)}\Phi_{0,2}.
    \end{aligned}
\end{align}

(2)When $m=2$, 
\begin{align*}
    \begin{aligned}
        \left(\mathbf{u},\theta\right)^{T}&=z_{1}e^{\lambda_{k_{0},1}^{1}t}\Phi_{k_{0},{1}}^{1}+z_{2}e^{\lambda_{k_{0}+1,1}^{1}t}\Phi_{k_{0}+1,{1}}^{1}
        \\
        &=\frac{2z_{1}e^{\lambda_{k_{0},1}^{1}t}}{\sqrt{\left[1+\left(u_{k_{0},1}^1\right)^2\gamma_{k_{0}1}^2\right]L}}
\begin{pmatrix}
    u_{k_{0},1}^{1}\pi \sin{\frac{k_{0}\pi x_{1}}{L}}\cos{\pi x_{2}}
    \\
    -u_{k_{0},1}^{1}\frac{k_{0}\pi}{L} \cos{\frac{k_{0}\pi x_{1}}{L}}\sin{\pi x_{2}}
    \\
    \cos{\frac{k_{0}\pi x_{1}}{L}}\sin{\pi x_{2}}
\end{pmatrix}
\\
&~~~~
+\frac{2z_{2}e^{\lambda_{k_{0}+1,1}^{1}t}}{\sqrt{\left[1+\left(u_{k_{0}+1,1}^1\right)^2\gamma_{(k_{0}+1)1}^2\right]L}}
\begin{pmatrix}
    u_{k_{0}+1,1}^{1}\pi \sin{\frac{(k_{0}+1)\pi x_{1}}{L}}\cos{\pi x_{2}}
    \\
    -u_{k_{0}+1,1}^{1}\frac{(k_{0}+1)\pi}{L} \cos{\frac{(k_{0}+1)\pi x_{1}}{L}}\sin{\pi x_{2}}
    \\
    \cos{\frac{(k_{0}+1)\pi x_{1}}{L}}\sin{\pi x_{2}}
\end{pmatrix},
    \end{aligned}
\end{align*}
\begin{align}\label{suanwan0210}
    \begin{aligned}
\widetilde{\mathbf{G}}&\left(z_{1}e^{\lambda_{k_{0},1}^{1}t}\Phi_{k_{0},{1}}^{1}+z_{2}e^{\lambda_{k_{0}+1,1}^{1}t}\Phi_{k_{0}+1,{1}}^{1}\right)=\left(G_{11}e^{2\lambda_{k_{0},1}^{1}t}z_{1}^2+G_{22}e^{2\lambda_{k_{0}+1,1}^{1}t}z_{2}^2\right)\Phi_{0,2}
\\
&+z_{1}z_{2}e^{\left(\lambda_{k_{0},1}^{1}+\lambda_{k_{0}+1,1}^{1}\right)t}\left(G_{12}^{1}\Phi_{1,2}^{1}+G_{12}^{2}\Phi_{1,2}^{2}+G_{21}^{1}\Phi_{2k_{0}+1,1}^{1}+G_{21}^{2}\Phi_{2k_{0}+1,1}^{2}\right),
    \end{aligned}
\end{align}
where 
\begin{align*}
    \begin{aligned}
        &G_{11}=\frac{\sqrt{2}k_{0}\pi^2 u_{k_{0},1}^{1}}{\sqrt{L}A^2},~
        G_{22}=\frac{\sqrt{2}(k_{0}+1)\pi^2 u_{k_{0}+1,1}^{1}}{\sqrt{L}B^2},
        \\
        & A=\sqrt{\left[1+\left(u_{k_{0},1}^{1}\right)^2\gamma_{k_{0}1}^2\right]L},~B=\sqrt{\left[1+\left(u_{k_{0}+1,1}^{1}\right)^2\gamma_{(k_{0}+1)1}^2\right]L},
        \\
        &G_{12}^{1}=\frac{\left(2k_{0}+1\right)\pi^2\left(u_{k_{0},1}^{1}+u_{k_{0}+1,1}^{1}\right)}{L AB}*\frac{u_{1,2}^{2}\sqrt{\left[1+\left(u_{1,2}^{1}\right)^2\gamma_{12}^{2}\right]L}}{2\left(u_{1,2}^{2}-u_{1,2}^{1}\right)},
        \\
        &G_{12}^{2}=\frac{\left(2k_{0}+1\right)\pi^2\left(u_{k_{0},1}^{1}+u_{k_{0}+1,1}^{1}\right)}{L AB}*\frac{u_{1,2}^{1}\sqrt{\left[1+\left(u_{1,2}^{2}\right)^2\gamma_{12}^{2}\right]L}}{2\left(u_{1,2}^{1}-u_{1,2}^{2}\right)},
        \\
        \end{aligned}
        \end{align*}
        \begin{align*}
        \begin{aligned}
        &G_{21}^{1}=\frac{\pi^2\left(u_{k_{0}+1,1}^{1}-u_{k_{0},1}^{1}\right)}{LAB}*\frac{u_{2k_{0}+1,2}^{2}\sqrt{\left[1+\left(u_{2k_{0}+1,2}^{1}\right)^2\gamma_{\left(2k_{0}+1\right)2}^{2}\right]L}}{2\left(u_{2k_{0}+1,2}^{2}-u_{2k_{0}+1,2}^{1}\right)},
        \\
        &G_{21}^{2}=\frac{\pi^2\left(u_{k_{0}+1,1}^{1}-u_{k_{0},1}^{1}\right)}{L AB}*\frac{u_{2k_{0}+1,2}^{1}\sqrt{\left[1+\left(u_{2k_{0}+1,2}^{2}\right)^2\gamma_{\left(2k_{0}+1\right)2}^{2}\right]L}}{2\left(u_{2k_{0}+1,2}^{1}-u_{2k_{0}+1,2}^{2}\right)}.
    \end{aligned}
\end{align*}
Then, 
\begin{align}\label{gongshi0210}
    \begin{aligned}
        h&\left(z_{1}\Phi_{k_{0},{1}}^{1}+z_{2}\Phi_{k_{0}+1,{1}}^{1}\right)\approx \left(\frac{G_{11}z_{1}^2}{2\lambda_{k_{0},1}^{1}+4\pi^2}+\frac{G_{22}z_{2}^2}{2\lambda_{k_{0}+1,1}^{1}+4\pi^2}\right)\Phi_{0,2}
        \\
        &+z_{1}z_{2}\left(
        \frac{G_{12}^{1}\Phi_{1,2}^{1}}{\lambda_{k_{0},1}^{1}+\lambda_{k_{0}+1,1}^{1}+\lambda_{1,2}^{1}}+\frac{G_{12}^{2}\Phi_{1,2}^{2}}{\lambda_{k_{0},1}^{1}+\lambda_{k_{0}+1,1}^{1}+\lambda_{1,2}^{2}}
        \right)
        \\
        &+z_{1}z_{2}\left(
        \frac{G_{21}^{1}\Phi_{2k_{0}+1,2}^{1}}{\lambda_{k_{0},1}^{1}+\lambda_{k_{0}+1,1}^{1}+\lambda_{2k_{0}+1,2}^{1}}+\frac{G_{21}^{2}\Phi_{2k_{0}+1,2}^{2}}{\lambda_{k_{0},1}^{1}+\lambda_{k_{0}+1,1}^{1}+\lambda_{2k_{0}+1,2}^{2}}
        \right).
    \end{aligned}
\end{align}
\section{Reduction and bifurcation}\label{yuehua0205}
In this section, the center manifold reduction method is employed to perform the reduction of the system \eqref{model}-\eqref{bianjian0814}, yielding a finite-dimensional ordinary differential equation (ODE). The dynamical behavior of this ODE in the neighborhood of zero is equivalent to that of system \eqref{model}-\eqref{bianjian0814} near the origin. Through the analysis of the resulting ODE system, conclusions regarding the dynamical behavior of system \eqref{model}-\eqref{bianjian0814} are drawn.
\subsection{Reduction}\label{yuehua0211}
\begin{lemma}\label{yuehua0210}\rm{(}\textbf{Reduction}\rm{)} For $R$ in the vicinity of $R_{c}$, the stability and transition of the system \eqref{model}-\eqref{bianjian0814} with small solution is equivalent to these of the following ODEs:

\begin{enumerate}[label=(\arabic*)]
    \item  when $m=1$, the ODE is given by 
    \begin{align}\label{diyige0210}
        \begin{aligned}
            \frac{dq}{dt}=\lambda_{k_{0},1}^{1}q+aq^{3}+o,
        \end{aligned}
    \end{align}
    where $a=-\frac{u_{k_{0},1}^{1}u_{k_{0},1}^{2}k_{0}\pi^2 G_{1}}{2L\left(\lambda_{k_{0},1}^{1}+2\pi^2\right)\left(u_{k_{0},1}^{2}-u_{k_{0},1}^{1}\right)}$.
    \item when $m=2$, the ODEs are given by 
    \begin{align}\label{dierge0210}
        \begin{cases}
            \frac{dq_{1}}{dt}=\lambda_{k_{0},1}^{1}q_{1}+q_{1}\left(a_{11}q_{1}^{2}+a_{22}q_{2}^2\right)+o,
            \\
            \frac{dq_{2}}{dt}=\lambda_{k_{0}+1,1}^{1}q_{2}+q_{2}\left(b_{11}q_{1}^2+b_{22}q_{2}^2\right)+o,
        \end{cases}
    \end{align}
    where $a_{11}$, $a_{22}$, $b_{11}$, and $b_{22}$ can be found in \eqref{xishu0210}. 
\end{enumerate}
\end{lemma}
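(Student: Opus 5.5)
The plan is to use \autoref{zhongxinliuxing07276}, which yields the locally invariant, exponentially attracting manifold $M_{l}(R)$, and to project the equation onto $\mathbf{X}_{c}$ along it. Write a small solution on the manifold as $\Phi(t)=\sum_{i=1}^{m}q_{i}(t)\Phi_{k_{i},1}^{1}+h\bigl(\sum_{i}q_{i}\Phi_{k_{i},1}^{1}\bigr)$. Pairing \eqref{suanzixing0708} with the dual vectors $\widetilde{\Phi}_{k_{i},1}^{1}$ of Remark \ref{fangbian0205} gives the exact reduced system
\begin{align*}
\frac{dq_{i}}{dt}=\lambda_{k_{i},1}^{1}q_{i}+\Bigl(\mathbf{G}\Bigl(\sum_{j}q_{j}\Phi_{k_{j},1}^{1}+h\Bigr),\widetilde{\Phi}_{k_{i},1}^{1}\Bigr).
\end{align*}
Because of the attractivity statement in \autoref{bijin0727}, and because $h(0)=0$ and $D_{\xi}h(0)=0$ by \eqref{xuyao1222}, the stability and transition of small solutions are governed by this finite-dimensional system. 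It therefore remains to expand the right-hand side up to cubic order.

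The first step is to show that the quadratic part vanishes. We have $\mathbf{G}(\xi,\xi)=(\mathbf{0},-\mathbf{u}\cdot\nabla\theta)$ with $\xi\in\mathbf{X}_{c}$. For $m=1$ this equals $G_{1}q^{2}\sin2\pi x_{2}$, as computed before \eqref{gongshi0209}. This term is orthogonal to $\cos\frac{k_{0}\pi x_{1}}{L}\sin\pi x_{2}$, and likewise to the $x_{1}$-component of $\widetilde{\Phi}_{k_{0},1}^{1}$. For $m=2$, \eqref{suanwan0210} shows that the quadratic terms lie in the modes $(0,2)$, $(1,2)$ and $(2k_{0}+1,2)$. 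All of these are orthogonal to the modes $(k_{0},1)$ and $(k_{0}+1,1)$, because the $x_{2}$-wavenumber is $2$ rather than $1$.

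The second step computes the cubic terms. Since $\mathbf{G}$ is bilinear, these are $\bigl(G(\xi,h(\xi))+G(h(\xi),\xi),\widetilde{\Phi}_{k_{i},1}^{1}\bigr)$, with $h$ replaced by its quadratic approximations \eqref{gongshi0209} and \eqref{gongshi0210}. Any error in $h$ of order $o(|\xi|^{2})$ contributes only $o(|q|^{3})$, because $\mathbf{G}$ is bilinear and continuous on the relevant spaces. By \eqref{yexu0625}, the $\theta$-component is controlled in $H_{\frac{3}{2}}$, which is what makes this step rigorous.

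For $m=1$, $h$ has only a temperature component in $\Phi_{0,2}$. Then $G(h,\xi)=0$, because the velocity of $h$ is zero. The remaining term $G(\xi,h)=-\mathbf{u}_{\xi}\cdot\nabla\theta_{h}$ only involves $u_{2}\partial_{x_{2}}\sin2\pi x_{2}$. The product $\cos\frac{k_{0}\pi x_{1}}{L}\sin\pi x_{2}\cos2\pi x_{2}$ projects onto $\cos\frac{k_{0}\pi x_{1}}{L}\sin\pi x_{2}$ with coefficient $-\tfrac12$. Pairing the result with the $\theta$-component of $\widetilde{\Phi}^{1}_{k_{0},1}$, which carries the factor $u^{2}_{k_{0},1}/(u^{2}_{k_{0},1}-u^{1}_{k_{0},1})$, and using the denominator $2\lambda^{1}_{k_{0},1}+4\pi^{2}$ from $h$, gives exactly the stated coefficient $a$.

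For $m=2$, the same computation applies to each piece of \eqref{gongshi0210}. Parity in $x_{1}$ shows that only the terms $q_{1}^{3}$, $q_{1}q_{2}^{2}$, $q_{2}q_{1}^{2}$ and $q_{2}^{3}$ survive in the appropriate equations. The mixed terms $q_{1}^{2}q_{2}$ in the first equation and $q_{1}q_{2}^{2}$ in the second would require the wavenumber $k_{0}$ to be produced from $\{k_{0},k_{0}+1\}$-combinations with an odd count, which is impossible. This gives \eqref{dierge0210} with coefficients $a_{11},a_{22},b_{11},b_{22}$.

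The main obstacle is justifying the equivalence claim itself. The cutoff system \eqref{jiale0727} agrees with \eqref{model} only for $\|\Phi\|_{\mathbf{X}_{\alpha}}\le r$, and $h$ is only known to be Lipschitz with $h=o(|\xi|)$, not $C^{2}$. I would handle this as follows. First, I would note that the exponential attraction in \autoref{bijin0727} reduces the asymptotic dynamics of small solutions to the flow on $M_{l}(R)$. Second, I would bound the remainder $h-h_{2}$ by $o(|\xi|^{2})$. To do this, I would feed the second-order expansion of the fixed point $\Phi(t;R,\xi)$ of $\mathbf{F}$ back into \eqref{hdedansheng1215}, and estimate the difference with Proposition \ref{babay0120} and the contraction bound in Theorem \ref{yashuo1010}. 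The bookkeeping of Fourier coefficients is routine but lengthy.
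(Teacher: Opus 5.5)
Your proposal is correct and follows essentially the same route as the paper: project onto $\mathbf{X}_{c}$ with the dual vectors of Remark \ref{fangbian0205} as in \eqref{odes0210}, insert the quadratic approximations \eqref{gongshi0209}--\eqref{gongshi0210} of $h$, and read off the cubic coefficients; your $m=1$ computation does reproduce the stated $a$. Your orthogonality argument for the vanishing quadratic terms and your $x_{1}$-parity argument for the structure of \eqref{dierge0210} spell out what the paper leaves implicit, and the remainder bound $h-h_{2}=o(|\xi|^{2})$ that you only sketch is used purely formally (``$\approx$'') in the paper too, so your treatment is at least as complete as the original.
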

\begin{proof}
We have shown that the linear operator $\mathbf{L}$ has a complete eigenvector system, see Lemma \ref{wanbei0205}. Specifically, any solution $\Phi=\left(\mathbf{u},\theta\right)\in \mathbf{E}_{1}\times\widetilde{\mathbf{E}_{2}}$ to the system \eqref{model}-\eqref{bianjian0814} can be decomposed into 
$$
\Phi=\Psi_{1}+\Psi_{2},
$$
where $\Psi_{1}=\sum\limits_{i=1}^{m}q_{i}\left(t\right)\Phi_{k_{i},1}^{1}\in \mathbf{X}_{c}$, $\Psi_{2}\in\overline{\mathbf{X}}_{s} $ and $\overline{\mathbf{X}}_{s}$ is spanned by the rest of eigenvectors. In addition, under the small solution  assumption, $\Psi_{2}$ can be approximated by $h\left(\Psi_{1}\right)$. 
Thus, the dynamical behavior of the system \eqref{model}-\eqref{bianjian0814} is equivalent to these of the following ODEs:
\begin{align}\label{odes0210}
    \frac{dq_{i}}{dt}=\langle L\Phi,\widetilde{\Phi}_{k_{i},1}^{1}\rangle +\langle\mathbf{G}\left(\Psi_{1}+h\left(\Psi_{1}\right)\right),\widetilde{\Phi}_{k_{i},1}^{1} \rangle =\lambda_{k_{i},1}^{1}q_{i}+\langle\mathbf{G}\left(\Psi_{1}+h\left(\Psi_{1}\right)\right),\widetilde{\Phi}_{k_{i},1}^{1} \rangle,
\end{align}
where $m=1$ or $2$, and $\widetilde{\Phi}_{k_{i},1}^{1}$ can be found in Remark \ref{fangbian0205}.

    (1)When $m=1$, $\Psi_{1}=q\Phi_{k_{0},1}^{1}$, according to \eqref{gongshi0209}, 
    \begin{align}\label{zhongxin10210}
        \begin{aligned}
            h\left(q\Phi_{k_{0},1}^{1}\right)\approx\frac{q^2G_{1}\sqrt{L}\Phi_{0,2}}{\sqrt{2}\left(2\lambda_{k_{0},1}^{1}+4\pi^2\right)}=q^2\widetilde{G}_{1}\Phi_{0,2},~\widetilde{G}_{1}=\frac{G_{1}\sqrt{L}}{\sqrt{2}\left(2\lambda_{k_{0},1}^{1}+4\pi^2\right)}.
        \end{aligned}
    \end{align}
Substituting \eqref{zhongxin10210} into \eqref{odes0210}, we obtain 
\begin{align}\label{yijie0210}
    \frac{dq}{dt}=\lambda_{k_{0},1}^{1}q+a q^{3}+o,
\end{align}
where $a=-\frac{\sqrt{2}u_{k_{0},1}^{1}u_{k_{0},1}^{2}k_{0}\pi^2 \widetilde{G}_{1}}{L^{\frac{3}{2}}\left(u_{k_{0},1}^{2}-u_{k_{0},1}^{1}\right)}$.

(2)When $m=2$, $\Psi_{1}=q_{1}\Phi_{k_{0},1}^{1}+q_{2}\Phi_{k_{0}+1,1}^{1}$, according to \eqref{gongshi0210}, 
    \begin{align}\label{zhongxin20210}
        \begin{aligned}
            h&\left(q_{1}\Phi_{k_{0},{1}}^{1}+q_{2}\Phi_{k_{0}+1,{1}}^{1}\right)\approx \left(\frac{G_{11}q_{1}^2}{2\lambda_{k_{0},1}^{1}+4\pi^2}+\frac{G_{22}q_{2}^2}{2\lambda_{k_{0}+1,1}^{1}+4\pi^2}\right)\Phi_{0,2}
        \\
        &+q_{1}q_{2}\left(
        \frac{G_{12}^{1}\Phi_{1,2}^{1}}{\lambda_{k_{0},1}^{1}+\lambda_{k_{0}+1,1}^{1}+\lambda_{1,2}^{1}}+\frac{G_{12}^{2}\Phi_{1,2}^{2}}{\lambda_{k_{0},1}^{1}+\lambda_{k_{0}+1,1}^{1}+\lambda_{1,2}^{2}}
        \right)
        \\
        &+q_{1}q_{2}\left(
        \frac{G_{21}^{1}\Phi_{2k_{0}+1,2}^{1}}{\lambda_{k_{0},1}^{1}+\lambda_{k_{0}+1,1}^{1}+\lambda_{2k_{0}+1,2}^{1}}+\frac{G_{21}^{2}\Phi_{2k_{0}+1,2}^{2}}{\lambda_{k_{0},1}^{1}+\lambda_{k_{0}+1,1}^{1}+\lambda_{2k_{0}+1,2}^{2}}
        \right).
        \end{aligned}
    \end{align}
    Substituting \eqref{zhongxin20210} into \eqref{odes0210}, we obtain 
    \begin{align}\label{ODEs202010}
        \begin{cases}
            \frac{dq_{1}}{dt}=\lambda_{k_{0},1}^{1}q_{1}+q_{1}\left(a_{11}q_{1}^2+a_{22}q_{2}^{2}\right)+o,
            \\
            \frac{dq_{2}}{dt}=\lambda_{k_{0}+1,1}^{1}q_{2}+q_{2}\left(b_{11}q_{1}^2+b_{22}q_{2}^{2}\right)+o,
        \end{cases}
    \end{align}
    where 
    \begin{align}\label{xishu0210}
        \begin{aligned}
            &a_{11}=\frac{\widetilde{G}_{11}k_{0}\pi^2 u_{k_{0},1}^{1}u_{k_{0},1}^{2}}{\sqrt{2}L^{\frac{3}{2}}\left(u_{k_{0},1}^{1}-u_{k_{0},1}^2\right)},~b_{22}=\frac{\widetilde{G}_{22}(k_{0}+1)\pi^2 u_{k_{0}+1,1}^{1}u_{k_{0}+1,1}^{2}}{\sqrt{2}L^{\frac{3}{2}}\left(u_{k_{0}+1,1}^{1}-u_{k_{0}+1,1}^2\right)},
            \\
           & \begin{aligned}
                a_{22}&=\frac{\pi^2 u_{k_{0},1}^{2}}{2L^3\left(u_{k_{0},1}^{1}-u_{k_{0},1}^{2}\right)}\bigg{[}
                \sqrt{2}\widetilde{G}_{22}k_{0}L^{\frac{3}{2}}u_{k_{0},1}^{1}
                \\
                &+\frac{L^2A}{B}
                \sum\limits_{i=1}^{2}\bigg{(}
                \widetilde{G}_{12}^{i}+\widetilde{G}_{21}^{i}u_{2k_{0}+1,2}^{i}+\widetilde{G}_{21}^{i}u_{k_{0}+1,1}^{1}
                \bigg{)}
                \bigg{]},
            \end{aligned}
            \\
            &\begin{aligned}
                b_{11}&=\frac{\pi^2u_{k_{0}+1,1}^{2}}{2L^{3}\left(u_{k_{0}+1,1}^{1}-u_{k_{0}+1,1}^{2}\right)}\bigg{[}
                \sqrt{2}\widetilde{G}_{11}\left(k_{0}+1\right)L^{\frac{3}{2}} u_{k_{0}+1,1}^{1}
                \\
                &-\frac{L^2 B}{A}\sum\limits_{i=1}^{2}
                \bigg{(}
                \widetilde{G}_{121}^{i}+\widetilde{G}_{211}^{i}u_{2k_{0}+1,2}^{i}+\widetilde{G}_{211}^{i}u_{k_{0},1}^{1}
                \bigg{)}
                \bigg{]},
            \end{aligned}
        \end{aligned}
        \end{align}
        \begin{align*}
        \begin{aligned}            &\widetilde{G}_{11}=\frac{G_{11}}{\lambda_{k_{0},1}^{1}+2\pi^2},~\widetilde{G}_{22}=\frac{G_{22}}{\lambda_{k_{0}+1,1}^{1}+2\pi^2},
            \\  &\widetilde{G}_{12}^{i}=\frac{(2k_{0}+1)\left(u_{1,2}^{i}+u_{k_{0}+1,1}^{1}\right)G_{12}^{i}}{\left(\lambda_{k_{0},1}^{1}+\lambda_{k_{0}+1,1}^{1}+\lambda_{1,2}^{i}\right)\sqrt{L\left[1+\left(u_{1,2}^{i}\right)^2\gamma_{12}^2\right]}},
\\
&\widetilde{G}_{21}^{i}=\frac{G_{21}^{i}}{\left(\lambda_{k_{0},1}^{1}+\lambda_{k_{0}+1,1}^{1}+\lambda_{2k_{0}+1,2}^{i}\right)\sqrt{L\left[1+\left(u_{2k_{0}+1,2}^{i}\right)^2\gamma_{12}^2\right]}},
\\
&\widetilde{G}_{121}^{i}=\frac{(2k_{0}+1)G_{12}^{i}\left(u_{1,2}^{i}-u_{k_{0},1}^{1}\right)}{\sqrt{L\left(1+\left(u_{1,2}^{i}\right)^2\gamma_{12}^2\right)}\left(\lambda_{1,2}^{i}+\lambda_{k_{0},1}^{1}+\lambda_{k_{0}+1,1}^{1}\right)},
\\
&\widetilde{G}_{211}^{i}=\frac{G_{21}^{i}}{\sqrt{L\left[1+\left(u_{2k_{0}+1,2}^{i}\right)^2 \gamma_{12}^2\right]}\left(\lambda_{2k_{0}+1,2}^{i}+\lambda_{k_{0},1}^{1}+\lambda_{k_{0}+1,1}^{1}\right)}.
        \end{aligned}
    \end{align*}
\end{proof}

\subsection{bifurcation from real simple eigenvalue}\label{diyi0211}
By analyzing the system \eqref{diyige0210}, we obtain
\begin{theorem}\label{diyidingli0211}\rm{[}\textbf{bifurcation from real simple eigenvalue}\rm{]} For the system \eqref{model}-\eqref{bianjian0814}, we have the following conclusions:

\begin{enumerate}[label=(\arabic*)]
    \item  If $a(R_{c})<0$, the system undergoes a supercritical pitchfork bifurcation at $R=R_{c}$ from the trivial branch $\Phi=0$, and bifurcates for $R>R_{c}$
to a local attractor
$\sum$ consisting of two stable steady-state solutions $\Phi_{1}=\left(\mathbf{u}_{1},\theta_{1}\right)$ and  $\Phi_{2}=\left(\mathbf{u}_{2},\theta_{2}\right)$
, as shown in \autoref{diyigedinglitu1}.
The two steady-state solutions are approximately given by
\begin{align}\label{liangge0211}
    \begin{aligned}
        \Phi_{i}=\left(-1\right)^{i}\sqrt{\frac{\lambda_{k_{0},1}^{1}}{-a}}\Phi_{k_{0},1}^{1}+o\left(\left|\lambda_{k_{0},1}^{1}\right|^{\frac{1}{2}}\right),~\text{where~}i=1,2.
    \end{aligned}
\end{align}
    \item If $a(R_{c})>0$, the system undergoes a supercritical pitchfork bifurcation at $R=R_{c}$ from the trivial branch $\Phi=0$, and bifurcates for $R<R_{c}$
to two unstable points $\Phi_{1}=\left(\mathbf{u}_{1},\theta_{1}\right)$ and  $\Phi_{2}=\left(\mathbf{u}_{2},\theta_{2}\right)$
, as shown in \autoref{diyigedinglitu2}.
The two steady-state solutions are approximately given as \eqref{liangge0211}.
\end{enumerate}
\end{theorem}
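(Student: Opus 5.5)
The plan is to work entirely with the one-dimensional reduced equation \eqref{diyige0210} from Lemma \ref{yuehua0210}, and to transfer its dynamics back to \eqref{model}-\eqref{bianjian0814}. The transfer uses the invariance of $M_{l}(R)$ (Theorem \ref{yashuo0727}) and the exponential attraction toward it (Theorem \ref{bijin0727}).

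\textbf{Step 1: collect the ingredients of the reduced equation.} For $R$ near $R_c$ with $\mathrm{Card}(B)=1$:
\begin{itemize}
\item the critical eigenvalue $\lambda:=\lambda_{k_0,1}^1(R)$ depends smoothly on $R$;
\item it satisfies $\lambda(R_c)=0$ and has the same sign as $R-R_c$, by Lemma \ref{tadepu0925};
\item its crossing is transversal, since differentiating \eqref{danshi0814} in $R$ at $\lambda=0$ gives $\frac{d\lambda}{dR}\big|_{R_c}=\frac{2V_aR_ck_0^2\pi^2/(L^2r_{k_0 1}^2)}{V_a+r_{k_01}^2}>0$;
\item the coefficient $a(R)$ is continuous in $R$, so $a(R)$ has the sign of $a(R_c)\neq 0$ in a neighborhood of $R_c$.
\end{itemize}
The reduced equation is $\dot q=\lambda q+aq^3+o(|q|^3)$. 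Here the remainder is odd in $q$. This oddness follows from the symmetry $(x_1,x_2)\mapsto(L-x_1,x_2)$ when $k_0$ is odd, or from the corresponding half-period shift. I will use this symmetry to exclude even-order remainder terms, which in any case vanish at second order by the computation leading to \eqref{zhongxin10210}.

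\textbf{Step 2: analyze the scalar equation.} The steady states are $q=0$ and $q^2=-\lambda/a+o(|\lambda|)$. An implicit function argument applied to $\lambda+aq^2+o(q^2)=0$ in the variable $q^2$ makes this precise.
\begin{itemize}
\item If $a<0$, the nonzero roots exist exactly when $\lambda>0$, i.e. $R>R_c$. There the linearization is $\lambda+3aq^2=-2\lambda+o(\lambda)<0$, so both bifurcated points are stable, while $q=0$ is unstable.
\item If $a>0$, the roots exist exactly when $\lambda<0$, i.e. $R<R_c$. The linearization is $-2\lambda>0$, so both points are unstable.
\end{itemize}
In the case $a<0$, a Lyapunov argument with $V=q^2$ shows $\dot V\le 2\lambda q^2+2aq^4(1+o(1))$. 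Hence every orbit in a fixed small neighborhood of $0$ other than $q=0$ converges to $\pm\sqrt{-\lambda/a}$. This gives the local attractor $\Sigma=\{\Phi_1,\Phi_2\}$, or more precisely the interval joining them, whose limit points are these two stable nodes.

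\textbf{Step 3: lift back to the full system.} On the manifold, the steady states are $\Phi_i=q_i\Phi^1_{k_0,1}+h(q_i\Phi^1_{k_0,1})$. Since $h=O(|q|^2)$ by \eqref{xuyao1222}, this yields \eqref{liangge0211} with error $O(|\lambda|)=o(|\lambda|^{1/2})$. Stability in the full phase space $\mathbf{X}_\alpha$ combines two facts: stability within $M_l(R)$, and the exponential convergence $\|\Phi_s-h(\Phi_c)\|\to0$ of Theorem \ref{bijin0727}. Concretely, a solution starting near $\Phi_i$ stays in the $2r$-ball. Its projection $\Phi_c$ is then driven by the reduced vector field plus an exponentially decaying perturbation, and the hyperbolicity of $\pm\sqrt{-\lambda/a}$ in the scalar ODE absorbs that perturbation by a Gronwall argument.

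\textbf{Main obstacle.} The hardest step is this last one. It needs uniformity: the ball of radius $r$ must contain the bifurcated points, since $|q|\sim|\lambda|^{1/2}\ll r$ for $R$ close enough to $R_c$. The estimates in Theorem \ref{bijin0727} must also hold uniformly in $R$ near $R_c$. I would try first to verify that all constants $\widetilde C(r)$, $\gamma_0$ and $\gamma$ can be chosen independently of $R$ in a compact neighborhood of $R_c$. The spectral gap \eqref{zuida1212} is continuous in $R$, which supports this. Finally, I would note that case (2), with $a>0$, is in fact the subcritical situation, and the proof shows the bifurcated points appear on the side $R<R_c$.
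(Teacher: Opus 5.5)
Your Steps 1--2 match the paper's proof, which consists only of three ingredients: continuity of $a(R)$, the sign change of $\lambda^1_{k_0,1}$ across $R_c$ (Lemma \ref{tadepu0925}), and the elementary steady-state and stability analysis of the scalar equation \eqref{diyige0210} supplied by Lemma \ref{yuehua0210}. Your transversality computation, the Lyapunov argument, and the explicit lifting through Theorem \ref{bijin0727} go beyond what the paper does; the paper never addresses the uniformity-in-$R$ or stay-in-the-ball hypotheses you rightly flag. You are also right that case (2) is really the subcritical one, as the caption of \autoref{diyigedinglitu2} indicates.

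Two small corrections. First, a box with lateral walls has no half-period shift symmetry, so your oddness argument fails for even $k_0$. Use instead the up--down reflection $x_2\mapsto 1-x_2$, $(u_1,u_2,\theta)\mapsto(u_1,-u_2,-\theta)$, which works for every $k_0$: it preserves \eqref{model}-\eqref{bianjian0814} and the $\mathbf{X}_\alpha$-norm (hence the cut-off), and sends $\Phi^1_{k_0,1}$ to $-\Phi^1_{k_0,1}$. Second, \eqref{xuyao1222} by itself gives only $h=o(|q|)$, not $O(|q|^2)$. That weaker bound is nevertheless all the error term in \eqref{liangge0211} requires.
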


\begin{figure}[htp]
		\begin{minipage}[t]{0.45\linewidth}
			\centering
			{\begin{tikzpicture}[>=stealth, scale=0.8]
    \draw[black, thick] (-1.5, 0) -- (1.5, 0);
    \draw[black, thick] (0, -1.5) -- (0, 1.5);
    \fill[red] (0, 0) circle (2pt);
    \node at (0, -0.5) {$\Phi=0$};
    \draw[green, thick, ->] (1, 0) -- (0.2, 0);
    \draw[green, thick, ->] (-1, 0) -- (-0.2, 0);
    \draw[green, thick, ->] (0, 1) -- (0, 0.2);
    \draw[green, thick, ->] (0, -1) -- (0, -0.2);
\end{tikzpicture}
}
			
		\end{minipage}
		\hfill
		\begin{minipage}[t]{0.5\linewidth}
			\centering
			{\begin{tikzpicture}[>=stealth,scale=1]
    \tikzset{
        axis/.style={black, thick},
        arrow/.style={green, thick, ->},
        point/.style={circle, fill, minimum size=5pt, inner sep=0pt}
    }

    \draw[axis] (-0.5, 0) -- (1, 0);
    \draw[axis] (1, -1) -- (1, 1);
    \node[point, red] at (1, 0) {};
    \node at (1, -0.6) {$\Phi_1$};
    \draw[arrow] (1.8, 0) -- (1.2, 0);
    \draw[arrow] (0.2, 0) -- (0.8, 0);
    \draw[arrow] (1, 0.8) -- (1, 0.2);
    \draw[arrow] (1, -0.8) -- (1, -0.2);

    \draw[axis] (3, -1) -- (3, 1);
    \node[point, black] at (3, 0) {};
    \node at (3, -0.6) {$\Phi=0$};
    \draw[arrow] (3, 0.8) -- (3, 0.2);
    \draw[arrow] (3, -0.8) -- (3, -0.2);

    \draw[axis] (5, -1) -- (5, 1);
    \node[point, red] at (5, 0) {};
    \node at (5, -0.6) {$\Phi_2$};
    \draw[arrow] (6, 0) -- (5.2, 0);
    \draw[arrow] (4.2, 0) -- (4.8, 0);
    \draw[arrow] (5, 0.8) -- (5, 0.2);
    \draw[arrow] (5, -0.8) -- (5, -0.2);

    \draw[axis] (1, 0) -- (3, 0);
    \draw[axis] (3, 0) -- (6, 0);
\end{tikzpicture}}
		\end{minipage}
        \caption{Topological structure of supercritical pitchfork bifurcation of the system when $a(R_{c})<0$: $R<R_{c}$(left) and $R>R_{c}$(right).}
        \label{diyigedinglitu1}
	\end{figure}

\begin{figure}[htp]
		\begin{minipage}[t]{0.45\linewidth}
			\centering
			{\begin{tikzpicture}[>=stealth]
   \tikzset{
        axis/.style={black, thick},
        arrow/.style={green, thick, ->},
        point/.style={circle, fill, minimum size=4pt, inner sep=0pt}
    }
    \draw[black, thick] (-1.5, 0) -- (1.5, 0);
    \draw[black, thick] (0, -1.5) -- (0, 1.5);
    \fill[black] (0, 0) circle (2pt);
    \node at (0, -0.5) {$\Phi=0$};
    \draw[arrow] (0.2, 0) -- (1, 0);
    \draw[arrow] (-0.2, 0) -- (-1, 0);
     \draw[arrow] (0, 0.2) -- (0, 1);
     \draw[arrow] (0, -0.2) -- (0, -1);
\end{tikzpicture}
}
		\end{minipage}
		\hfill
		\begin{minipage}[t]{0.5\linewidth}
			\centering
			{\begin{tikzpicture}[>=stealth, scale=1]
    \tikzset{
        axis/.style={black, thick},
        arrow/.style={green, thick, ->},
        point/.style={circle, fill, minimum size=4pt, inner sep=0pt}
    }

    \draw[axis] (-0.5, 0) -- (1, 0);
    \draw[axis] (1, -1) -- (1, 1);
    \node[point, black] at (1, 0) {};
    \node at (1, -0.6) {$\Phi_1$};
    \draw[arrow] (2.2, 0) -- (2.8, 0);
    \draw[arrow] (0.8, 0) -- (0.2, 0);
    \draw[arrow] (1, 0.8) -- (1, 0.2);
    \draw[arrow] (1, -0.8) -- (1, -0.2);

    \draw[axis] (3, -1) -- (3, 1);
    \node[point, red] at (3, 0) {};
    \node at (3, -0.6) {$\Phi=0$};
    \draw[arrow] (3, 0.8) -- (3, 0.2);
    \draw[arrow] (3, -0.8) -- (3, -0.2);

    \draw[axis] (5, -1) -- (5, 1);
    \node[point, black] at (5, 0) {};
    \node at (5, -0.6) {$\Phi_2$};
    \draw[arrow] (5.2, 0) -- (5.4, 0);
    \draw[arrow] (3.8, 0) -- (3.2, 0);
    \draw[arrow] (5, 0.8) -- (5, 0.2);
    \draw[arrow] (5, -0.8) -- (5, -0.2);

    \draw[axis] (1, 0) -- (3, 0);
    \draw[axis] (3, 0) -- (6, 0);
\end{tikzpicture}}
		\end{minipage}
        \caption{Topological structure of subcritical pitchfork bifurcation of the system when $a(R_{c})>0$: $R>R_{c}$(left) and $R<R_{c}$(right).}
        \label{diyigedinglitu2}
	\end{figure}
\begin{proof}
   
(1) When $a(R_{c})<0$, continuity implies $a(R)<0$ for all $R$ in a neighborhood of $R_{c}$. Furthermore, for $R<R_{c}$, all eigenvalues of $L$ are negative, so $\Phi=\mathbf{0}$ is linearly stable.
For $R>R_{c}$, $\lambda_{k_{0},1}^{1}>0$, and System \eqref{diyige0210} admits two new steady states. This implies that Systems \eqref{model}–\eqref{bianjian0814} undergoes a bifurcation at $R=R_{c}$, giving rise to two new steady states $\Phi_{1}$ and $\Phi_{2}$ given as \eqref{liangge0211} when R crosses $R_{c}$ from left to right.
Finally, a straightforward stability analysis of these two new steady states for System \eqref{diyige0210} shows that $\Phi_{1}$ and $\Phi_{2}$ are stable.

 (2) A similar argument applies to yield the corresponding conclusions as $a(R_{c})>0$.
\end{proof}
    
\subsection{bifurcation from two real eigenvalues }\label{dier0211}
We now consider the case $m=2$. For the system \eqref{dierge0210}, it has at most eight non-zero equilibrium points as follows:
\begin{align}\label{cibang0211}
    \begin{aligned}
        \left(q_{1},q_{2}\right)=\left(\pm\sqrt{\eta_{1}},0\right),~(0,\pm\sqrt{\eta_{2}}),~\left(\pm\sqrt{\xi_{1}},\pm\sqrt{\xi_{2}}\right),
    \end{aligned}
\end{align}
where 
$$
\eta_{1}=-\frac{\lambda_{k_{0},1}^{1}}{a_{11}},~\eta_{2}=-\frac{\lambda_{k_{0}+1,1}^{1}}{b_{22}},~
\xi_{1}=\frac{\lambda_{k_{0}+1,1}^{1}a_{22}-\lambda_{k_{0},1}^{1}b_{22}}{a_{11}b_{22}-a_{22}b_{11}},~
\xi_{2}=\frac{\lambda_{k_{0}+1,1}^{1}a_{11}-\lambda_{k_{0},1}^{1}b_{11}}{a_{22}b_{11}-a_{11}b_{22}}.
$$
In order to narrate easily, we make the following notations:
\begin{align}\label{wan20211}
\begin{aligned}
&\mathbf{Y_{1}}=-\mathbf{Y}_{2}=\left(\sqrt{\eta_{1}},0\right),~\mathbf{Y}_{3}=-\mathbf{Y}_{4}=\left(0,\sqrt{\eta_{2}}\right),
\\
&\mathbf{Y}_{5}=-\mathbf{Y}_{6}=\left(\sqrt{\xi_{1}},\sqrt{\xi_{2}}\right),~\mathbf{Y}_{7}=-\mathbf{Y}_{8}=\left(-\sqrt{\xi_{1}},\sqrt{\xi_{2}}\right).
\end{aligned}
\end{align}
And 
\begin{align}\label{wan10211}
    \begin{aligned}
        &\Phi_{1}=-\Phi_{2}=\sqrt{\eta_{1}}\Phi_{k_{0},1}^{1},~\Phi_{3}=-\Phi_{4}=\sqrt{\eta_{2}}\Phi_{k_{0}+1,1}^{1},
        \\
        &\Phi_{5}=-\Phi_{6}=\sqrt{\xi_{1}}\Phi_{k_{0},1}^{1}+\sqrt{\xi_{2}}\Phi_{k_{0}+1,1}^{1},
        \\
        &\Phi_{7}=-\Phi_{8}=
        -\sqrt{\xi_{1}}\Phi_{k_{0},1}^{1}+\sqrt{\xi_{2}}\Phi_{k_{0}+1,1}^{1}.
    \end{aligned}
\end{align}

A straightforward stability analysis to each steady state implies the following theorem.
\begin{theorem}\label{dierdingli0211}\rm{[}\textbf{bifurcation from two real  eigenvalues}\rm{]} For the system \eqref{model}-\eqref{bianjian0814}, we have the following conclusions:
\begin{enumerate}[label=(\arabic*)]
    \item If $a_{11}<0$, $b_{22}<0$, $b_{11}>a_{11}$, $a_{22}>b_{22}$, $a_{11}b_{22}-a_{22}b_{11}>0$, and $a_{22}a_{11}+b_{11}b_{22}-2a_{11}b_{22}<0$, it has a  supercritical pitchfork bifurcation at $R=R_{c}$  from the trivial branch $\Phi=0$, and bifurcates
for $R>R_{c}$
to an attractor $\mathcal{A}$ which exactly contains eight non-degenerate equilibrium points $\Phi_{i}$ $(i=1,2,\cdots,8)$ and
is homeomorphic to the one-dimensional sphere $S^{1}$, as shown in \autoref{diyigedinglitu3}. Among them, $\Phi_{i}$ $(i=1,2,3,4)$ are unstable
while $\Phi_{5}-\Phi_{8}$ are stable;
   \item If $b_{11}<a_{11}<0$, $a_{22}<b_{22}<0$, and $a_{11}b_{22}-a_{22}b_{11}<0$, it has a supercritical pitchfork bifurcation at $R=R_{c}$  from the trivial branch $\Phi=0$, and bifurcates
for $R>R_{c}$
to an attractor $\mathcal{A}$ which exactly contains eight non-degenerate equilibrium points $\Phi_{i}$ $(i=1,2,\cdots,8)$ and
is homeomorphic to the one-dimensional sphere $S^{1}$, as shown in \autoref{diyigedinglitu11}. Among them, $\Phi_{i}$ $(i=1,2,3,4)$ are stable
while $\Phi_{5}-\Phi_{8}$ are unstable;

\item If $b_{11}>a_{11}>0$, $a_{22}>b_{22}>0$, $a_{11}b_{22}-a_{22}b_{11}>0$, and $b_{11}b_{22}+a_{11}a_{22}-2 a_{11}b_{22}<0$, it has a  supercritical pitchfork bifurcation at $R=R_{c}$ from the trivial branch $\Phi=0$. Specifically, as $R<R_{c}$, $\Phi=0$ is stable and $\Phi_{i}$ $\left(i=1,2,3,4\right)$ are unstable; as $R>R_{c}$,  $\Phi_{i}$ $\left(i=5,6,7,8\right)$ are stable, see \autoref{diyigedinglitu14}.

\item If $b_{11}<a_{11}<0$, $a_{22}<b_{22}<0$, $a_{11}b_{22}-a_{22}b_{11}>0$, and $b_{11}b_{22}+a_{11}a_{22}-2a_{11}b_{22}<0$, it has a supercritical pitchfork bifurcation at $R=R_{c}$  from the trivial branch $\Phi=0$. Specifically, as $R<R_{c}$, $\Phi=0$ is stable and $\Phi_{i}$ $\left(i=5,6,7,8\right)$ are unstable; as $R>R_{c}$,  $\Phi_{i}$ $\left(i=1,2,3,4\right)$ are stable, see \autoref{diyigedinglitu24}.

\item If $a_{11}<0$, $b_{22}>0$, $a_{22}<b_{22}$, and $b_{11}>a_{11}$, it has a subcritical pitchfork bifurcation at $R=R_{c}$  from the trivial branch $\Phi=0$. Specifically, as $R<R_{c}$, $\Phi=0$ is stable and $\Phi_{i}$ $\left(i=3,4\right)$ are unstable; as $R>R_{c}$,  $\Phi_{i}$ $\left(i=1,2\right)$ are unstable, see \autoref{diyigedinglitu4}.
\item If $a_{11}>0$, $b_{22}<0$, $a_{22}>b_{22}$, and $b_{11}<a_{11}$, it has a subcritical pitchfork bifurcation at $R=R_{c}$  from the trivial branch $\Phi=0$. Specifically, as $R<R_{c}$, $\Phi=0$ is stable and $\Phi_{i}$ $\left(i=1,2\right)$ are unstable; as $R>R_{c}$,  $\Phi_{i}$ $\left(i=3,4\right)$ are unstable, see \autoref{diyigedinglitu5}.
\item If $a_{11}<0$, $b_{22}<0$, $b_{11}>a_{11}$, $a_{22}>b_{22}$, and $a_{11}b_{22}-a_{22}b_{11}<0$, it has a subcritical pitchfork bifurcation at $R=R_{c}$  from the trivial branch $\Phi=0$. Specifically, as $R<R_{c}$, $\Phi=0$ is stable and $\Phi_{i}$ $\left(i=5,6,7,8\right)$ are unstable; as $R>R_{c}$,  $\Phi_{i}$ $\left(i=1,2,3,4\right)$ are unstable, see \autoref{diyigedinglitu15}.
\item If $a_{11}>0$, $b_{22}>0$, $b_{11}<a_{11}$, $a_{22}<b_{22}$, and $a_{11}b_{22}-a_{22}b_{11}<0$, it has a subcritical pitchfork bifurcation at $R=R_{c}$  from the trivial branch $\Phi=0$. Specifically, as $R<R_{c}$, $\Phi=0$ is stable and $\Phi_{i}$ $\left(i=1,2,3,4\right)$ are unstable; as $R>R_{c}$,  $\Phi_{i}$ $\left(i=5,6,7,8\right)$ are unstable, see \autoref{diyigedinglitu16}.
\end{enumerate}
\end{theorem}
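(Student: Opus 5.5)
The plan is to reduce everything to the two-dimensional system \eqref{dierge0210}, using Lemma \ref{yuehua0210} together with the attractivity in \autoref{bijin0727}. Stability and transition of small solutions of \eqref{model}-\eqref{bianjian0814} are then read off from the truncated cubic system
\begin{align*}
\dot q_1=\lambda_1 q_1+q_1(a_{11}q_1^2+a_{22}q_2^2),\quad \dot q_2=\lambda_2 q_2+q_2(b_{11}q_1^2+b_{22}q_2^2),
\end{align*}
where $\lambda_1=\lambda^1_{k_0,1}$ and $\lambda_2=\lambda^1_{k_0+1,1}$. Both eigenvalues vanish at $R=R_c$ and have the sign of $R-R_c$ nearby, by Lemma \ref{tadepu0925}. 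Since $L\in\mathcal S$ in Remark \ref{remark0723}, I will use $\lambda_1\approx\lambda_2=:\lambda$ to leading order; a rescaling $q\mapsto q/\sqrt{|\lambda|}$ then makes the higher-order terms $o(1)$ perturbations. Every conclusion therefore reduces to the non-degenerate equilibria of the truncated system. Hyperbolicity persists under small perturbations, so the equilibria and their stability types carry over. I then lift them through $\Phi=\Psi_1+h(\Psi_1)$ and use \eqref{xuyao1222} to obtain $\Phi_i$ in \eqref{wan10211}.

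The core computation is the Jacobian at each family of equilibria in \eqref{cibang0211}. At $(\pm\sqrt{\eta_1},0)$ with $\eta_1=-\lambda/a_{11}$, the Jacobian is diagonal with entries $-2\lambda$ and $\lambda(1-b_{11}/a_{11})$. Existence requires $\lambda/a_{11}<0$. The symmetric statement holds at $(0,\pm\sqrt{\eta_2})$, with entries $-2\lambda$ and $\lambda(1-a_{22}/b_{22})$. At the mixed points, $\xi_{1,2}$ share the sign of $\lambda\,(a_{22}-b_{22})/(a_{11}b_{22}-a_{22}b_{11})$ and $\lambda\,(b_{11}-a_{11})/(a_{11}b_{22}-a_{22}b_{11})$ respectively. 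The Jacobian is $2\begin{pmatrix}a_{11}\xi_1&a_{22}\sqrt{\xi_1\xi_2}\\ b_{11}\sqrt{\xi_1\xi_2}&b_{22}\xi_2\end{pmatrix}$. Its determinant is $4\xi_1\xi_2(a_{11}b_{22}-a_{22}b_{11})$ and its trace is $2(a_{11}\xi_1+b_{22}\xi_2)$. Substituting $\xi_i$, the trace becomes $\lambda(a_{11}a_{22}+b_{11}b_{22}-2a_{11}b_{22})/(a_{11}b_{22}-a_{22}b_{11})$. This explains where the conditions in items (1), (3), (4) come from. I then go through cases (1)--(8) in turn, checking for each sign of $R-R_c$ which families exist and reading off the signs of eigenvalues, or of the determinant and trace.

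For items (1) and (2) I additionally need the attractor $\mathcal A\cong S^1$. I will invoke the attractor bifurcation theorem of Ma--Wang (\cite{ma2007}), applied to the reduced system. Its hypothesis is that $0$ is locally asymptotically stable at $R=R_c$, which I check via the Lyapunov function $V=q_1^2+q_2^2$ or a weighted version. The sign conditions $a_{11},b_{22}<0$, together with the determinant condition, should make the cubic form $a_{11}q_1^4+(a_{22}+b_{11})q_1^2q_2^2+b_{22}q_2^4$ negative definite. Failing that, I will use a weighted $V=\mu q_1^2+q_2^2$ with a suitable $\mu>0$. The theorem gives an attractor homeomorphic to $S^1$ for $R>R_c$. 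Since the invariant axes and the eight equilibria alternate saddle and node around the circle, $\mathcal A$ consists exactly of these equilibria and the heteroclinic orbits connecting them.

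I expect the main obstacle to be twofold. First, I must justify that the $o$ terms and the mismatch $\lambda_1\neq\lambda_2$ off criticality do not destroy the conclusions. I will handle this by scaling and hyperbolicity, arguing that the exact sign conditions stated keep every equilibrium non-degenerate. Second, in the subcritical cases (5)--(8), the branches lie on the side where $\lambda<0$. There I only claim existence and instability of the branches and local stability of $0$, which follow directly from the Jacobians, without needing global attractor statements.
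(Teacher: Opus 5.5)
Your route is the paper's. It too reduces to \eqref{dierge0210}, tabulates existence and Jacobian sign conditions for $\mathbf Y_1$--$\mathbf Y_8$, and then appeals to degree theory. Your determinant $4\xi_1\xi_2D$ and trace $2(a_{11}\xi_1+b_{22}\xi_2)$, with $D=a_{11}b_{22}-a_{22}b_{11}$, reproduce its tables. Your Ma--Wang attractor-bifurcation argument is a more concrete substitute for the degree-theory step, and it works. In (1), $D>0$ guarantees a weight $\mu$ making $\mu q_1^2+q_2^2$ a strict Lyapunov function at $R=R_c$; in (2) all four coefficients are negative.

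The step that fails is your treatment of $\lambda_1\neq\lambda_2$. The condition $L\in\mathcal S$ only gives $\lambda_1(R_c)=\lambda_2(R_c)=0$; the crossing speeds differ. From \eqref{danshi0814},
$\lambda^1_{k,1}=\frac{V_ak^2\pi^2}{L^2r_{k1}^2(V_a+r_{k1}^2)}(R^2-R_c^2)+O((R-R_c)^2)$.
With $L^2=n(n+1)$ and $k_0=n$ this gives
\[
\frac{\lambda_2}{\lambda_1}\longrightarrow\kappa_0=\frac{(n+1)V_a+(2n+1)\pi^2}{nV_a+(2n+1)\pi^2}>1\qquad(R\to R_c),
\]
for example $\kappa_0\approx 1.12$ when $n=1$, $V_a=4$.

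So $\lambda_2-\lambda_1$ is of the same order as $\lambda_1$. After $q\mapsto q/\sqrt{|\lambda_1|}$ it is an $O(1)$ change of the vector field, not an $o(1)$ perturbation, and hyperbolicity cannot absorb it. Keeping $\kappa_0$, the transverse eigenvalues at the axis equilibria are $\lambda_1(\kappa_0-b_{11}/a_{11})$ and $\lambda_1(1-\kappa_0a_{22}/b_{22})$. The points $\mathbf Y_5$--$\mathbf Y_8$ exist iff $(\kappa_0a_{22}-b_{22})/D$ and $(b_{11}-\kappa_0a_{11})/D$ both have the sign of $\lambda_1$.

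The stated inequalities do not control these quantities:
\begin{itemize}
\item In (1), take $a_{11}=b_{22}=-1$, $a_{22}=-0.95$, $b_{11}=-0.5$. Every hypothesis holds, yet for $\kappa_0>1/0.95$ the points $\pm\mathbf Y_3$ are stable nodes and $\mathbf Y_5$--$\mathbf Y_8$ do not exist.
\item Item (2) fails when $1<b_{11}/a_{11}<\kappa_0$: $\pm\mathbf Y_1$ become saddles and the mixed points disappear.
\item Item (6) fails when $1/\kappa_0<a_{22}/b_{22}<1$: $\pm\mathbf Y_3$ are stable for $R>R_c$.
\end{itemize}
The $S^1$ attractor itself survives, since stability of $0$ at $R_c$ does not involve $\kappa_0$; the count and type of equilibria on it do not.

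The paper's proof makes the same unjustified identification $\lambda^1_{k_0,1}=\lambda^1_{k_0+1,1}$. To repair the argument, redo the case analysis with $\kappa_0$ carried through. Only then use scaling and hyperbolicity, for the genuine $o$-terms. For the tabulated coefficients ($a_{11},b_{11},b_{22}<0<a_{22}$ and $|b_{11}|<|a_{11}|$), all corrected conditions hold for every $\kappa_0\ge 1$, so conclusion (1) is recovered there.

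Finally, your case check will show that items (3) and (4) are vacuous. Their orderings force $a_{22}b_{11}>a_{11}b_{22}$, i.e. $D<0$, contradicting the assumed $D>0$. So the trace formula does not explain those conditions; they hold only trivially.
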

\begin{remark}
    It should be remarked that, while the theorem above enumerates eight possible scenarios, only the first conclusion is attained in the concrete examples considered herein, as listed in \autoref{tab:coeff_values}.
\end{remark}
\begin{figure}[htp]
		\begin{minipage}[t]{0.45\linewidth}
			\centering
			{\begin{tikzpicture}[>=stealth, scale=1.2]
    \draw[black, thick] (-1.5, 0) -- (1.5, 0);
    \draw[black, thick] (0, -1.5) -- (0, 1.5);
    \fill[red] (0, 0) circle (2pt);
    \node at (0, -0.5) {$\Phi=0$};
    \draw[green, thick, ->] (1, 0) -- (0.2, 0);
    \draw[green, thick, ->] (-1, 0) -- (-0.2, 0);
    \draw[green, thick, ->] (0, 1) -- (0, 0.2);
    \draw[green, thick, ->] (0, -1) -- (0, -0.2);
\end{tikzpicture}
}
			
		\end{minipage}
		\hfill
		\begin{minipage}[t]{0.5\linewidth}
			\centering
			{\begin{tikzpicture}[scale=1.3]
    \draw[->] (-1.5, 0) -- (1.5, 0) ;
    \draw[->] (0, -1.5) -- (0, 1.5) ;

    \draw (0,0) circle (1);

    \fill (0,0) circle (2pt) node[below right] {$\Phi=0$};

    \fill[black] (1,0) circle (2pt) node[right] {$\Phi_1$};
    \fill[black] (-1,0) circle (2pt) node[left] {$\Phi_2$};
    \fill[black] (0,1) circle (2pt) node[above] {$\Phi_3$};
    \fill[black] (0,-1) circle (2pt) node[below] {$\Phi_4$};

    \fill[red] (-0.707, 0.707) circle (2pt) node[above left] {$\Phi_7$};
    \fill[red] (0.707, 0.707) circle (2pt) node[above right] {$\Phi_5$};
    \fill[red] (0.707, -0.707) circle (2pt) node[below right] {$\Phi_6$};
    \fill[red] (-0.707, -0.707) circle (2pt) node[below left] {$\Phi_8$};
    \draw[green, thick, ->] (0.2, 0) -- (0.6, 0);
    \draw[green, thick, ->] (-0.2, 0) -- (-0.6, 0);
    \draw[green, thick, ->] (0, 0.2) -- (0, 0.6);
    \draw[green, thick, ->] (0, -0.2) -- (0, -0.6);

    \draw[green, thick, ->] (1.2, 0) -- (1.1, 0);
    \draw[green, thick, ->] (-1.2, 0) -- (-1.1, 0);
    \draw[green, thick, ->] (0, 1.2) -- (0, 1.1);
    \draw[green, thick, ->] (0, -1.2) -- (0, -1.1);

\draw[green, thick, ->] (0.2, 0.2) -- (0.6, 0.6);
    \draw[green, thick, ->] (1.2, 1.2) -- (0.8, 0.8);  

    \draw[green, thick, ->] (-0.2, 0.2) -- (-0.6, 0.6);
    \draw[green, thick, ->] (-1.2, 1.2) -- (-0.8, 0.8);

\draw[green, thick, ->] (0.2, -0.2) -- (0.6, -0.6);
    \draw[green, thick, ->] (1.2, -1.2) -- (0.8, -0.8);

\draw[green, thick, ->] (-0.2, -0.2) -- (-0.6, -0.6);
    \draw[green, thick, ->] (-1.2, -1.2) -- (-0.8, -0.8);

\draw[green, thick, ->] (10:1) arc(10:35:1); 

\draw[green, thick, ->] (80:1) arc(80:55:1); 

\draw[green, thick, ->] (100:1) arc(100:125:1); 

\draw[green, thick, ->] (170:1) arc(170:145:1);

\draw[green, thick, ->] (190:1) arc(190:215:1); 

\draw[green, thick, ->] (260:1) arc(260:235:1);

\draw[green, thick, ->] (280:1) arc(280:305:1); 

\draw[green, thick, ->] (350:1) arc(350:325:1);
    
\end{tikzpicture}}
		\end{minipage}
        \caption{Topological structure of supercritical pitchfork bifurcation of the system when $a_{11}<0$, $b_{22}<0$, $b_{11}>a_{11}$, $a_{22}>b_{22}$, $a_{11}b_{22}-a_{22}b_{11}>0$, and $a_{22}a_{11}+b_{11}b_{22}-2a_{11}b_{22}<0$: $R<R_{c}$(left) and $R>R_{c}$(right).}
        \label{diyigedinglitu3}
	\end{figure}

\begin{figure}[htp]
		\begin{minipage}[t]{0.45\linewidth}
			\centering
			{\begin{tikzpicture}[>=stealth, scale=1.2]
    \draw[black, thick] (-1.5, 0) -- (1.5, 0);
    \draw[black, thick] (0, -1.5) -- (0, 1.5);
    \fill[red] (0, 0) circle (2pt);
    \node at (0, -0.5) {$\Phi=0$};
    \draw[green, thick, ->] (1, 0) -- (0.2, 0);
    \draw[green, thick, ->] (-1, 0) -- (-0.2, 0);
    \draw[green, thick, ->] (0, 1) -- (0, 0.2);
    \draw[green, thick, ->] (0, -1) -- (0, -0.2);
\end{tikzpicture}
}
			
		\end{minipage}
		\hfill
		\begin{minipage}[t]{0.5\linewidth}
			\centering
			{\begin{tikzpicture}[scale=1.3]
    \draw[->] (-1.5, 0) -- (1.5, 0) ;
    \draw[->] (0, -1.5) -- (0, 1.5) ;

    \draw (0,0) circle (1);

    \fill (0,0) circle (2pt) node[below right] {$\Phi=0$};

    \fill[red] (1,0) circle (2pt) node[right] {$\Phi_1$};
    \fill[red] (-1,0) circle (2pt) node[left] {$\Phi_2$};
    \fill[red] (0,1) circle (2pt) node[above] {$\Phi_3$};
    \fill[red] (0,-1) circle (2pt) node[below] {$\Phi_4$};

    \fill[black] (-0.707, 0.707) circle (2pt) node[above left] {$\Phi_7$};
    \fill[black] (0.707, 0.707) circle (2pt) node[above right] {$\Phi_5$};
    \fill[black] (0.707, -0.707) circle (2pt) node[below right] {$\Phi_6$};
    \fill[black] (-0.707, -0.707) circle (2pt) node[below left] {$\Phi_8$};
    \draw[green, thick, ->] (0.2, 0) -- (0.6, 0);
    \draw[green, thick, ->] (-0.2, 0) -- (-0.6, 0);
    \draw[green, thick, ->] (0, 0.2) -- (0, 0.6);
    \draw[green, thick, ->] (0, -0.2) -- (0, -0.6);

    \draw[green, thick, ->] (1.2, 0) -- (1.1, 0);
    \draw[green, thick, ->] (-1.2, 0) -- (-1.1, 0);
    \draw[green, thick, ->] (0, 1.2) -- (0, 1.1);
    \draw[green, thick, ->] (0, -1.2) -- (0, -1.1);

\draw[green, thick, ->] (0.2, 0.2) -- (0.6, 0.6);
    \draw[green, thick, ->] (1.2, 1.2) -- (0.8, 0.8);  

    \draw[green, thick, ->] (-0.2, 0.2) -- (-0.6, 0.6);
    \draw[green, thick, ->] (-1.2, 1.2) -- (-0.8, 0.8);

\draw[green, thick, ->] (0.2, -0.2) -- (0.6, -0.6);
    \draw[green, thick, ->] (1.2, -1.2) -- (0.8, -0.8);

\draw[green, thick, ->] (-0.2, -0.2) -- (-0.6, -0.6);
    \draw[green, thick, ->] (-1.2, -1.2) -- (-0.8, -0.8);

\draw[green, thick, ->] (35:1) arc(35:10:1); 

\draw[green, thick, ->] (55:1) arc(55:80:1); 

\draw[green, thick, ->] (125:1) arc(125:100:1); 

\draw[green, thick, ->] (145:1) arc(145:170:1);

\draw[green, thick, ->] (215:1) arc(215:190:1); 

\draw[green, thick, ->] (235:1) arc(235:260:1);

\draw[green, thick, ->] (305:1) arc(305:280:1); 

\draw[green, thick, ->] (325:1) arc(325:350:1);
    
\end{tikzpicture}}
		\end{minipage}
        \caption{Topological structure of supercritical pitchfork bifurcation of the system when $b_{11}<a_{11}<0$, $a_{22}<b_{22}<0$, and $a_{11}b_{22}-a_{22}b_{11}<0$: $R<R_{c}$(left) and $R>R_{c}$(right).}
        \label{diyigedinglitu11}
	\end{figure}

\begin{figure}[htp]
		\begin{minipage}[t]{0.5\linewidth}
			\centering
			{\begin{tikzpicture}[scale=1.2]
    \draw[black, thick] (-1.5, 0) -- (1.5, 0);
    \draw[black, thick] (0, -1.5) -- (0, 1.5);
    \fill[red] (0, 0) circle (2pt);
    \node at (0, -0.5) {$\Phi=0$};

     \fill[black] (1,0) circle (2pt) node[above] {$\Phi_1$};
    \fill[black] (-1,0) circle (2pt) node[below] {$\Phi_2$};
    
    \fill[black] (0,1) circle (2pt) node[above] {$\Phi_3$};
    \fill[black] (0,-1) circle (2pt) node[below] {$\Phi_4$};
     
    \draw[green, thick, ->] (0.8, 0) -- (0.2, 0);
    \draw[green, thick, ->] (-0.8, 0) -- (-0.2, 0);
    \draw[green, thick, ->] (0, 0.8) -- (0, 0.2);
    \draw[green, thick, ->] (0, -0.8) -- (0, -0.2);

\end{tikzpicture}}
		\end{minipage}
        \hfill
		\begin{minipage}[t]{0.45\linewidth}
			\centering
			{\begin{tikzpicture}[>=stealth, scale=1.2]
    \draw[black, thick] (-1.5, 0) -- (1.5, 0);
    \draw[black, thick] (0, -1.5) -- (0, 1.5);
   
    \fill[black] (0, 0) circle (2pt);
    \node at (0, -0.5) {$\Phi=0$};

    \fill[red] (0.707, 0.707) circle (2pt) node[above right] {$\Phi_5$};
    \fill[red] (0.707, -0.707) circle (2pt) node[below right] {$\Phi_6$};
     \fill[red] (-0.707, 0.707) circle (2pt) node[above left] {$\Phi_7$};
    \fill[red] (-0.707, -0.707) circle (2pt) node[below left] {$\Phi_8$};
    \draw[green, thick, ->] (0.2, 0) -- (0.8, 0);
    \draw[green, thick, ->] (-0.2, 0) -- (-0.8, 0);
    \draw[green, thick, ->] (0, 0.2) -- (0, 0.8);
    \draw[green, thick, ->] (0, -0.2) -- (0, -0.8);
\end{tikzpicture}
}
			
		\end{minipage}
		
        \caption{Topological structure of supercritical pitchfork bifurcation of the system when $b_{11}>a_{11}>0$, $a_{22}>b_{22}>0$, $a_{11}b_{22}-a_{22}b_{11}>0$, and $b_{11}b_{22}+a_{11}a_{22}-2a_{11}b_{22}<0$: $R<R_{c}$(left) and $R>R_{c}$(right).}
        \label{diyigedinglitu14}
\end{figure}
\begin{figure}[htp]
		\begin{minipage}[t]{0.5\linewidth}
			\centering
			{\begin{tikzpicture}[scale=1.2]
    \draw[black, thick] (-1.5, 0) -- (1.5, 0);
    \draw[black, thick] (0, -1.5) -- (0, 1.5);
    \fill[red] (0, 0) circle (2pt);
    \node at (0, -0.5) {$\Phi=0$};

 \fill[black] (-0.707, 0.707) circle (2pt) node[above left] {$\Phi_7$};
    \fill[black] (0.707, 0.707) circle (2pt) node[above right] {$\Phi_5$};
    \fill[black] (0.707, -0.707) circle (2pt) node[below right] {$\Phi_6$};
    \fill[black] (-0.707, -0.707) circle (2pt) node[below left] {$\Phi_8$};
     
    \draw[green, thick, ->] (0.8, 0) -- (0.2, 0);
    \draw[green, thick, ->] (-0.8, 0) -- (-0.2, 0);
    \draw[green, thick, ->] (0, 0.8) -- (0, 0.2);
    \draw[green, thick, ->] (0, -0.8) -- (0, -0.2);

\end{tikzpicture}}
		\end{minipage}
        \hfill
		\begin{minipage}[t]{0.45\linewidth}
			\centering
			{\begin{tikzpicture}[>=stealth, scale=1.2]
    \draw[black, thick] (-1.5, 0) -- (1.5, 0);
    \draw[black, thick] (0, -1.5) -- (0, 1.5);
    \fill[red] (1,0) circle (2pt) node[above] {$\Phi_1$};
    \fill[red] (-1,0) circle (2pt) node[below] {$\Phi_2$};
    \fill[black] (0, 0) circle (2pt);
    \node at (0, -0.5) {$\Phi=0$};

     \fill[red] (0,1) circle (2pt) node[above] {$\Phi_3$};
    \fill[red] (0,-1) circle (2pt) node[below] {$\Phi_4$};
    \draw[green, thick, ->] (0.2, 0) -- (0.8, 0);
    \draw[green, thick, ->] (-0.2, 0) -- (-0.8, 0);
    \draw[green, thick, ->] (0, 0.2) -- (0, 0.8);
    \draw[green, thick, ->] (0, -0.2) -- (0, -0.8);
      \draw[green, thick, ->] (0, 1.4) -- (0, 1.2);
    \draw[green, thick, ->] (0, -1.4) -- (0, -1.2);
  \draw[green, thick, ->] (1.4, 0) -- (1.2, 0);
    \draw[green, thick, ->] (-1.4, 0) -- (-1.2, 0);
\end{tikzpicture}
}
			
		\end{minipage}
		
        \caption{Topological structure of supercritical pitchfork bifurcation of the system when $b_{11}<a_{11}<0$, $a_{22}<b_{22}<0$, $a_{11}b_{22}-a_{22}b_{11}>0$, and $b_{11}b_{22}+a_{11}a_{22}-2a_{11}b_{22}<0$: $R<R_{c}$(left) and $R>R_{c}$(right).}
        \label{diyigedinglitu24}
\end{figure}

    \begin{figure}[htp]
		\begin{minipage}[t]{0.45\linewidth}
			\centering
			{\begin{tikzpicture}[>=stealth, scale=1.2]
    \draw[black, thick] (-1.5, 0) -- (1.5, 0);
    \draw[black, thick] (0, -1.5) -- (0, 1.5);
    \fill[red] (0, 0) circle (2pt);
    \node at (0, -0.5) {$\Phi=0$};

     \fill[black] (0,1) circle (2pt) node[above] {$\Phi_3$};
    \fill[black] (0,-1) circle (2pt) node[below] {$\Phi_4$};
    \draw[green, thick, ->] (0.8, 0) -- (0.2, 0);
    \draw[green, thick, ->] (-0.8, 0) -- (-0.2, 0);
    \draw[green, thick, ->] (0, 0.8) -- (0, 0.2);
    \draw[green, thick, ->] (0, -0.8) -- (0, -0.2);
      \draw[green, thick, ->] (0, 1.2) -- (0, 1.4);
    \draw[green, thick, ->] (0, -1.2) -- (0, -1.4);

    \draw[green, thick, ->] (-0.8, 1) -- (-0.2, 1);
    \draw[green, thick, ->] (0.8, 1) -- (0.2, 1);
    \draw[green, thick, ->] (-0.8, -1) -- (-0.2, -1);
    \draw[green, thick, ->] (0.8, -1) -- (0.2, -1);
\end{tikzpicture}
}
			
		\end{minipage}
		\hfill
		\begin{minipage}[t]{0.5\linewidth}
			\centering
			{\begin{tikzpicture}[scale=1.2]
    \draw[black, thick] (-1.5, 0) -- (1.5, 0);
    \draw[black, thick] (0, -1.5) -- (0, 1.5);
    \fill[black] (0, 0) circle (2pt);
    \node at (0, -0.5) {$\Phi=0$};

     \fill[black] (1,0) circle (2pt) node[above] {$\Phi_1$};
    \fill[black] (-1,0) circle (2pt) node[below] {$\Phi_2$};
    \draw[green, thick, ->] (0.2, 0) -- (0.8, 0);
    \draw[green, thick, ->] (-0.2, 0) -- (-0.8, 0);
    \draw[green, thick, ->] (0, 0.2) -- (0, 0.8);
    \draw[green, thick, ->] (0, -0.2) -- (0, -0.8);
      \draw[green, thick, ->] (1.4, 0) -- (1.2, 0);
    \draw[green, thick, ->] (-1.4, 0) -- (-1.2, 0);

    \draw[green, thick, ->] (1, 0.2) -- (1, 0.8);
    \draw[green, thick, ->] (1, -0.2) -- (1, -0.8);
    \draw[green, thick, ->] (-1, 0.2) -- (-1, 0.8);
    \draw[green, thick, ->] (-1, -0.2) -- (-1, -0.8);
    
\end{tikzpicture}}
		\end{minipage}
        \caption{Topological structure of subcritical pitchfork bifurcation of the system when $a_{11}<0$, $b_{22}>0$, $a_{22}<b_{22}$, and $b_{11}>a_{11}$: $R<R_{c}$(left) and $R>R_{c}$(right).}
        \label{diyigedinglitu4}
\end{figure}

 \begin{figure}[htp]
		\begin{minipage}[t]{0.5\linewidth}
			\centering
			{\begin{tikzpicture}[scale=1.2]
    \draw[black, thick] (-1.5, 0) -- (1.5, 0);
    \draw[black, thick] (0, -1.5) -- (0, 1.5);
    \fill[red] (0, 0) circle (2pt);
    \node at (0, -0.5) {$\Phi=0$};

     \fill[black] (1,0) circle (2pt) node[above] {$\Phi_1$};
    \fill[black] (-1,0) circle (2pt) node[below] {$\Phi_2$};
    \draw[green, thick, ->] (0.8, 0) -- (0.2, 0);
    \draw[green, thick, ->] (-0.8, 0) -- (-0.2, 0);
    \draw[green, thick, ->] (0, 0.8) -- (0, 0.2);
    \draw[green, thick, ->] (0, -0.8) -- (0, -0.2);
      \draw[green, thick, ->] (1.2, 0) -- (1.4, 0);
    \draw[green, thick, ->] (-1.2, 0) -- (-1.4, 0);

    \draw[green, thick, ->] (1, 0.8) -- (1, 0.2);
    \draw[green, thick, ->] (1, -0.8) -- (1, -0.2);
    \draw[green, thick, ->] (-1, 0.8) -- (-1, 0.2);
    \draw[green, thick, ->] (-1, -0.8) -- (-1, -0.2);
    
\end{tikzpicture}}
		\end{minipage}
        \hfill
		\begin{minipage}[t]{0.45\linewidth}
			\centering
			{\begin{tikzpicture}[>=stealth, scale=1.2]
    \draw[black, thick] (-1.5, 0) -- (1.5, 0);
    \draw[black, thick] (0, -1.5) -- (0, 1.5);
    \fill[black] (0, 0) circle (2pt);
    \node at (0, -0.5) {$\Phi=0$};

     \fill[black] (0,1) circle (2pt) node[above] {$\Phi_3$};
    \fill[black] (0,-1) circle (2pt) node[below] {$\Phi_4$};
    \draw[green, thick, ->] (0.2, 0) -- (0.8, 0);
    \draw[green, thick, ->] (-0.2, 0) -- (-0.8, 0);
    \draw[green, thick, ->] (0, 0.2) -- (0, 0.8);
    \draw[green, thick, ->] (0, -0.2) -- (0, -0.8);
      \draw[green, thick, ->] (0, 1.4) -- (0, 1.2);
    \draw[green, thick, ->] (0, -1.4) -- (0, -1.2);

    \draw[green, thick, ->] (-0.2, 1) -- (-0.8, 1);
    \draw[green, thick, ->] (0.2, 1) -- (0.8, 1);
    \draw[green, thick, ->] (-0.2, -1) -- (-0.8, -1);
    \draw[green, thick, ->] (0.2, -1) -- (0.8, -1);
\end{tikzpicture}
}
			
		\end{minipage}
		
        \caption{Topological structure of subcritical pitchfork bifurcation of the system when $a_{11}>0$, $b_{22}<0$, $b_{11}
        <a_{11}$, and $a_{22}>b_{22}$: $R<R_{c}$(left) and $R>R_{c}$(right).}
        \label{diyigedinglitu5}
\end{figure}

\begin{figure}[htp]
		\begin{minipage}[t]{0.5\linewidth}
			\centering
			{\begin{tikzpicture}[scale=1.2]
    \draw[black, thick] (-1.5, 0) -- (1.5, 0);
    \draw[black, thick] (0, -1.5) -- (0, 1.5);
    \fill[red] (0, 0) circle (2pt);
    \node at (0, -0.5) {$\Phi=0$};

 \fill[black] (-0.707, 0.707) circle (2pt) node[above left] {$\Phi_7$};
    \fill[black] (0.707, 0.707) circle (2pt) node[above right] {$\Phi_5$};
    \fill[black] (0.707, -0.707) circle (2pt) node[below right] {$\Phi_6$};
    \fill[black] (-0.707, -0.707) circle (2pt) node[below left] {$\Phi_8$};
     
    \draw[green, thick, ->] (0.8, 0) -- (0.2, 0);
    \draw[green, thick, ->] (-0.8, 0) -- (-0.2, 0);
    \draw[green, thick, ->] (0, 0.8) -- (0, 0.2);
    \draw[green, thick, ->] (0, -0.8) -- (0, -0.2);

\end{tikzpicture}}
		\end{minipage}
        \hfill
		\begin{minipage}[t]{0.45\linewidth}
			\centering
			{\begin{tikzpicture}[>=stealth, scale=1.2]
    \draw[black, thick] (-1.5, 0) -- (1.5, 0);
    \draw[black, thick] (0, -1.5) -- (0, 1.5);
    \fill[black] (1,0) circle (2pt) node[above] {$\Phi_1$};
    \fill[black] (-1,0) circle (2pt) node[below] {$\Phi_2$};
    \fill[black] (0, 0) circle (2pt);
    \node at (0, -0.5) {$\Phi=0$};

     \fill[black] (0,1) circle (2pt) node[above] {$\Phi_3$};
    \fill[black] (0,-1) circle (2pt) node[below] {$\Phi_4$};
    \draw[green, thick, ->] (0.2, 0) -- (0.8, 0);
    \draw[green, thick, ->] (-0.2, 0) -- (-0.8, 0);
    \draw[green, thick, ->] (0, 0.2) -- (0, 0.8);
    \draw[green, thick, ->] (0, -0.2) -- (0, -0.8);
      \draw[green, thick, ->] (0, 1.4) -- (0, 1.2);
    \draw[green, thick, ->] (0, -1.4) -- (0, -1.2);
  \draw[green, thick, ->] (1.4, 0) -- (1.2, 0);
    \draw[green, thick, ->] (-1.4, 0) -- (-1.2, 0);
\end{tikzpicture}
}
			
		\end{minipage}
		
        \caption{Topological structure of subcritical pitchfork bifurcation of the system when $a_{11}<0$, $b_{22}<0$, $a_{22}>b_{22}$, $b_{11}>a_{11}$, and $a_{11}b_{22}-a_{22}b_{11}<0$: $R<R_{c}$(left) and $R>R_{c}$(right).}
        \label{diyigedinglitu15}
\end{figure}

\begin{figure}[htp]
		\begin{minipage}[t]{0.5\linewidth}
			\centering
			{\begin{tikzpicture}[scale=1.2]
    \draw[black, thick] (-1.5, 0) -- (1.5, 0);
    \draw[black, thick] (0, -1.5) -- (0, 1.5);
    \fill[red] (0, 0) circle (2pt);
    \node at (0, -0.5) {$\Phi=0$};
       \fill[black] (1,0) circle (2pt) node[above] {$\Phi_1$};
    \fill[black] (-1,0) circle (2pt) node[below] {$\Phi_2$};
    \fill[black] (0,1) circle (2pt) node[above] {$\Phi_3$};
    \fill[black] (0,-1) circle (2pt) node[below] {$\Phi_4$};
    \draw[green, thick, ->] (0.8, 0) -- (0.2, 0);
    \draw[green, thick, ->] (-0.8, 0) -- (-0.2, 0);
    \draw[green, thick, ->] (0, 0.8) -- (0, 0.2);
    \draw[green, thick, ->] (0, -0.8) -- (0, -0.2);

\end{tikzpicture}}
		\end{minipage}
        \hfill
		\begin{minipage}[t]{0.45\linewidth}
			\centering
			{\begin{tikzpicture}[>=stealth, scale=1.2]
    \draw[black, thick] (-1.5, 0) -- (1.5, 0);
    \draw[black, thick] (0, -1.5) -- (0, 1.5);
  
    \fill[black] (0, 0) circle (2pt);
    \node at (0, -0.5) {$\Phi=0$};
\fill[black] (-0.707, 0.707) circle (2pt) node[above left] {$\Phi_7$};
    \fill[black] (0.707, 0.707) circle (2pt) node[above right] {$\Phi_5$};
    \fill[black] (0.707, -0.707) circle (2pt) node[below right] {$\Phi_6$};
    \fill[black] (-0.707, -0.707) circle (2pt) node[below left] {$\Phi_8$};
     
    \draw[green, thick, ->] (0.2, 0) -- (0.8, 0);
    \draw[green, thick, ->] (-0.2, 0) -- (-0.8, 0);
    \draw[green, thick, ->] (0, 0.2) -- (0, 0.8);
    \draw[green, thick, ->] (0, -0.2) -- (0, -0.8);
\end{tikzpicture}
}
			
		\end{minipage}
		
        \caption{Topological structure of subcritical pitchfork bifurcation of the system when $a_{11}>0$, $b_{22}>0$, $a_{22}<b_{22}$, $b_{11}<a_{11}$, and $a_{11}b_{22}-a_{22}b_{11}<0$: $R<R_{c}$(left) and $R>R_{c}$(right).}
        \label{diyigedinglitu16}
\end{figure}

\begin{proof}
    Since $\lambda_{k_{0},1}^{1}=\lambda_{k_{0}+1,1}^{1}=0$ as $R=R_{c}$, we let $\lambda_{k_{0},1}^{1}=\lambda_{k_{0}+1}^{1}=\lambda$ when $R$ is in the vicinity of $R_{c}$. In addition, let 
    \begin{align*}
        \begin{aligned}
            \mathbf{f}\left(q_{1},q_{2}\right)=\left(\lambda q_{1}+q_{1}\left(a_{11}q_{1}^{2}+a_{22}q_{2}^2\right),\lambda q_{2}+q_{2}\left(b_{11}q_{1}^{2}+b_{22}q_{2}^2\right)\right).
        \end{aligned}
    \end{align*}
    Thus, 
    \begin{align}\label{xuanzhe0212}
        \begin{aligned}
            D\mathbf{f}\left(q_{1},q_{2}\right)=
            \begin{pmatrix}
                \lambda +3 q_{11}q_{1}^2+a_{22} q_{2}^2&2a_{22}q_{1}q_{2}
                \\
                2b_{11}q_{1}q_{2}&\lambda+b_{11}q_{1}^2+3b_{22}q_{2}^2
            \end{pmatrix}.
        \end{aligned}
    \end{align}
    Furthermore, according to \eqref{cibang0211}, we deduce the conditions for the existence of the steady state $\mathbf{Y}_{1}-\mathbf{Y}_{8}$ in two scenarios $R>R_{c}$ and $R<R_{c}$ as follows.
\begin{center}
\begin{tabular}{|c|c|c|c|}
\hline
 &$\mathbf{Y}_1,\mathbf{Y}_2$  & $\mathbf{Y}_3,\mathbf{Y}_4$ & $\mathbf{Y}_5-\mathbf{Y}_8$ \\
\hline
 $R>R_{c}$& $a_{11}<0$ & $b_{22}<0$  & 
$\begin{aligned}

(a_{22}-b_{22})(a_{11}b_{22}-a_{22}b_{11}) &> 0 \\
(a_{11}-b_{11})(a_{11}b_{22}-a_{22}b_{11}) &< 0
\end{aligned}$ \\
\hline
$R<R_{c}$& $a_{11}>0$ & $b_{22}>0$  & 
$\begin{aligned}

(a_{22}-b_{22})(a_{11}b_{22}-a_{22}b_{11}) &< 0 \\
(a_{11}-b_{11})(a_{11}b_{22}-a_{22}b_{11}) &> 0
\end{aligned}$
\\
\hline
\end{tabular}
\end{center}
By a straightforward eigenvalue computation of $D\mathbf{f}\left(\mathbf{Y}_{i}\right)$($i=1,2,\cdots,8$) as $R>R_{c}$, we also obtain the conditions which implies the linear stability of $\mathbf{Y}_{i}$ as follows. 
\begin{center}
\begin{tabular}{|c|c|c|}
\hline
 $\mathbf{Y}_1,\mathbf{Y}_2$  & $\mathbf{Y}_3,\mathbf{Y}_4$ & $\mathbf{Y}_5-\mathbf{Y}_8$ \\
\hline
 
 $\begin{aligned}
     &a_{11}\left(a_{11}+b_{11}\right)>0
     \\
     &a_{11}\left(b_{11}-a_{11}\right)>0
 \end{aligned}$ & $\begin{aligned}
     &b_{22}\left(a_{22}+b_{22}\right)>0
     \\
     &b_{22}\left(a_{22}-b_{22}\right)>0
 \end{aligned}$  & 
$\begin{aligned}

(a_{22}a_{11}+b_{11}b_{22}-2a_{11}b_{22})(a_{11}b_{22}-a_{22}b_{11}) &< 0 \\
(a_{11}-b_{11})\left(b_{22}-a_{22}\right)(a_{11}b_{22}-a_{22}b_{11}) &>0
\end{aligned}$ \\
\hline
\end{tabular}
\end{center}
From the above two tables and Topology degree theory\cite{Chang2005}, all the conclusions are obtained. 


\end{proof}

\section{Numerical stimulation}\label{shuzhi0205}
In this section we present numerical investigations of the model
\eqref{model}-\eqref{bianjian0814}, organized into four parts.
First we describe the parameter regime and the bifurcated steady
solutions predicted by the center manifold reduction in
Subsection~\ref{diyi0211} (see \autoref{yueqianleixing0702}).
Second we introduce the stream-function--temperature formulation of
the governing equations and develop a finite-difference
scheme for their numerical integration. Third we report simulation
results that validate the theoretical predictions. Finally, we investigate the effect of $V_{a}$ on convection.


\subsection{Parameter regime and bifurcation analysis}

\autoref{yueqianleixing0702} displays the parameter regime diagram in the $(L,V_a)$-plane
for which all observed bifurcations are supercritical when the first
critical eigenvalue is simple. We omit the scenario in which the number of the first
critical eigenvalue is two since the parameter set yielding two primary characteristic roots is a zero measure set, see remark \ref{remark0723}. 
\begin{figure}[H]
    \centering
    {\includegraphics[width=2in]{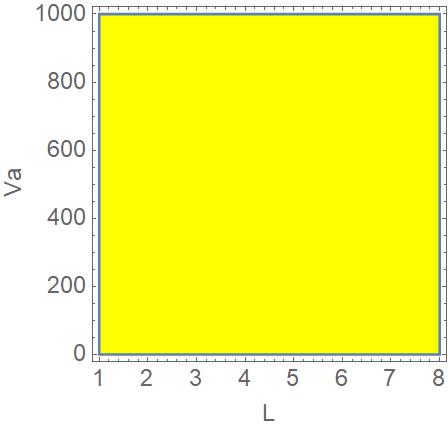}}
\caption{$L\in [1,8]$, $V_{a}\in [0.1,1000]$}.
\label{yueqianleixing0702}
\end{figure}
 We now present two examples: one in which the first eigenvalue is simple, and one in which it is double. For the former, we conduct numerical simulations via a finite difference scheme in the subsequent section.

\begin{example}[Simple eigenvalue]
Take $L=3.2$ and $V_a=4$.  From formula \eqref{linjie0926} we obtain
$R_c = 6.296275276569543$, with critical index $(k_0,\ell_0)=(3,1)$.
We choose $R = 6.297 > R_c$.  Evaluating \eqref{tezhenzhi0814}-\eqref{tehzenxiangliang0814} and \eqref{diyige0210}
yields
\[
\lambda_{k_0,\!1}^1 = 2.58397\times 10^{-5},\quad
\lambda_{k_0,\!1}^2 = -22.5441,
\]
\[
u_{k_0,\!1}^{1} = -0.99997,\quad u_{k_0,\!1}^{2} = 0.215703,
\quad a = -952.3990839194586 < 0.
\]
Since $a < 0$, Theorem \ref{diyidingli0211} guarantees a supercritical pitchfork
bifurcation, and the stable approximated nontrivial solutions are
\begin{equation}\label{nonzero0703}
\Phi_i \approx (-1)^i\,
\begin{pmatrix}
-0.000130868\sin\dfrac{3\pi x_1}{3.2}\cos(\pi x_2) \\[6pt]
0.000122688\cos\dfrac{3\pi x_1}{3.2}\sin(\pi x_2) \\[6pt]
0.0000416565\cos\dfrac{3\pi x_1}{3.2}\sin(\pi x_2)
\end{pmatrix},
\qquad i = 1,2.
\end{equation}
\end{example}
\begin{example}[Two eigenvalues]
In view of Remark \ref{remark0723}, the first eigenvalue of the linear operator 
$\mathbf{L}$ is double precisely when $L=\sqrt{n\left(n+1\right)}$ for some $n\in\mathbf{N}^{*}$.
 A number of representative cases are gathered in Table  below; in each instance, the coefficients satisfy the first conclusion of Theorem \ref{dierdingli0211}. This provides strong evidence that, in the double-eigenvalue regime, the bifurcation is necessarily supercritical pitchfork bifurcation as shown in \autoref{diyigedinglitu3}.

\begin{table}[htbp]
\centering
\begin{tabular}{|c|c|c|c|c|c|}
\hline
\(n\) & \(V_a\) & \(a_{11}\) & \(a_{22}\) & \(b_{11}\) & \(b_{22}\) \\
\hline
1 & 4 & \(-2.34826 \times 10^{-2}\) & \(1.21249 \times 10^{-2}\) & \(-6.56936 \times 10^{-3}\) & \(-2.71360 \times 10^{-2}\) \\
2 & 4 & \(-1.50558 \times 10^{-2}\) & \(3.83726 \times 10^{-3}\) & \(-2.68433 \times 10^{-3}\) & \(-1.64196 \times 10^{-2}\) \\
3 & 4 & \(-1.09955 \times 10^{-2}\) & \(1.85800 \times 10^{-3}\) & \(-1.44224 \times 10^{-3}\) & \(-1.16979 \times 10^{-2}\) \\
4 & 4 & \(-8.64540 \times 10^{-3}\) & \(1.09232 \times 10^{-3}\) & \(-8.97529 \times 10^{-4}\) & \(-9.07198 \times 10^{-3}\) \\
5 & 4 & \(-7.11877 \times 10^{-3}\) & \(7.18145 \times 10^{-4}\) & \(-6.11688 \times 10^{-4}\) & \(-7.40489 \times 10^{-3}\) \\
6 & 4 & \(-6.04879 \times 10^{-3}\) & \(5.07819 \times 10^{-4}\) & \(-4.43404 \times 10^{-4}\) & \(-6.25387 \times 10^{-3}\) \\
1 & 10 & \(-4.45059 \times 10^{-2}\) & \(2.29800 \times 10^{-2}\) & \(-1.39356 \times 10^{-2}\) & \(-5.75634 \times 10^{-2}\) \\
2 & 10 & \(-2.91011 \times 10^{-2}\) & \(7.41695 \times 10^{-3}\) & \(-5.54958 \times 10^{-3}\) & \(-3.39459 \times 10^{-2}\) \\
3 & 10 & \(-2.14410 \times 10^{-2}\) & \(3.62307 \times 10^{-3}\) & \(-2.95061 \times 10^{-3}\) & \(-2.39323 \times 10^{-2}\) \\
4 & 10 & \(-1.69429 \times 10^{-2}\) & \(2.14068 \times 10^{-3}\) & \(-1.82581 \times 10^{-3}\) & \(-1.84547 \times 10^{-2}\) \\
5 & 10 & \(-1.39961 \times 10^{-2}\) & \(1.41193 \times 10^{-3}\) & \(-1.23990 \times 10^{-3}\) & \(-1.50098 \times 10^{-2}\) \\
6 & 10 & \(-1.19192 \times 10^{-2}\) & \(1.00066 \times 10^{-3}\) & \(-8.96585 \times 10^{-4}\) & \(-1.26456 \times 10^{-2}\) \\
\hline
\end{tabular}
\caption{Numerical values of the coefficients for various \(n\) and \(V_a\).}
\label{tab:coeff_values}
\end{table}
\end{example}

\subsection{Stream-function formulation and discretization}

To solve \eqref{model}-\eqref{bianjian0814}  numerically we introduce the
stream function $\Psi$ satisfying
\[
u_1=u = \frac{\partial\Psi}{\partial x_2},\qquad
u_2 =v= -\frac{\partial\Psi}{\partial x_1}.
\]
 Then, the pressure term is eliminated from the momentum
equation, and the system becomes
\begin{align}\label{bianleyixia0703}
\begin{aligned}
&\frac{\partial\Delta\Psi}{\partial t}
= -V_a\Delta\Psi - V_a R\,\frac{\partial\theta}{\partial x_1},
\\
&\frac{\partial\theta}{\partial t}
+ \frac{\partial\Psi}{\partial x_2}\frac{\partial\theta}{\partial x_1}
- \frac{\partial\Psi}{\partial x_1}\frac{\partial\theta}{\partial x_2}
= -R\,\frac{\partial\Psi}{\partial x_1} + \Delta\theta,
\end{aligned}
\end{align}
with boundary conditions
\[
\Psi\big|_{\partial\Omega}=0,\qquad
\theta\big|_{x_2=0,1}=0,\qquad
\frac{\partial\theta}{\partial x_1}\Big|_{x_1=0,L}=0,
\]
and initial conditions chosen to match the theoretical prediction
\eqref{nonzero0703}:
\begin{equation}\label{eq:init}
\Psi_0 = -0.0000416564\sin\frac{3\pi x_1}{3.2}\sin(\pi x_2),\quad
\theta_0 = 0.0000416565\cos\frac{3\pi x_1}{3.2}\sin(\pi x_2).
\end{equation}

\subsubsection{Spatial discretization}

We cover the domain by a uniform grid
\[
x_{1,i}=i\Delta x,\;\;i=0,\ldots,N_x,\qquad
x_{2,j}=j\Delta y,\;\;j=0,\ldots,N_y,
\]
where $\Delta x=L/N_x$, $\Delta y=1/N_y$.  All spatial derivatives
are approximated by second-order central finite differences.
Let $\Psi_{j,i}^n$ and $\theta_{j,i}^n$ denote the discrete
approximations at time level $t_n=n\Delta t$.


\subsubsection{Time-stepping algorithm}

At each time step the algorithm proceeds through five stages:

\medskip\noindent
\textbf{Step 1. Compute the discrete Laplacian of $\psi$.}
\[
(\Delta_h\psi)_{j,i}^n =
\frac{\psi_{j,i+1}^n-2\psi_{j,i}^n+\psi_{j,i-1}^n}{\Delta x^2}
+ \frac{\psi_{j+1,i}^n-2\psi_{j,i}^n+\psi_{j-1,i}^n}{\Delta y^2},
\qquad 2\le j\le N_y\!-\!1,\; 2\le i\le N_x\!-\!1.
\]

\medskip\noindent
\textbf{Step 2. Compute $\partial\theta/\partial x_1$ by central differences.}
\[
(\partial_{x_1}\theta)_{j,i}^n =
\frac{\theta_{j,i+1}^n-\theta_{j,i-1}^n}{2\Delta x}.
\]

\medskip\noindent
\textbf{Step 3. Update the discrete Laplacian via the stream function equation.}
From $\eqref{bianleyixia0703}_{1}$ we advance $\Delta_h\psi$ with an explicit Euler step:
\begin{equation}
(\Delta_h\psi)_{j,i}^{n+1} =
(\Delta_h\psi)_{j,i}^n
+ \Delta t\Bigl[ -V_a(\Delta_h\psi)_{j,i}^n
- V_a R\,(\partial_{x_1}\theta)_{j,i}^n \Bigr].
\label{eq:update_lap}
\end{equation}

\medskip\noindent
\textbf{Step 4. Recover $\psi^{n+1}$ from its Laplacian by solving a Poisson equation.}
Given the updated $(\Delta_h\psi)^{n+1}$, we solve the discrete
Poisson problem
\[
\Delta_h \psi^{n+1} = (\Delta_h\psi)^{n+1},
\qquad \psi^{n+1}\big|_{\partial\Omega} = 0,
\]
using the Jacobi iteration.  Denoting the $k$-th Jacobi iterate by
$\psi_{j,i}^{(k)}$ and using $\psi^n$ as the initial guess, the
iteration reads
\begin{equation}
\psi_{j,i}^{(k+1)} =
\frac{
\displaystyle\frac{\psi_{j,i+1}^{(k)}+\psi_{j,i-1}^{(k)}}{\Delta x^2}
+ \frac{\psi_{j+1,i}^{(k)}+\psi_{j-1,i}^{(k)}}{\Delta y^2}
- (\Delta_h\psi)_{j,i}^{n+1}
}{
\displaystyle\frac{2}{\Delta x^2} + \frac{2}{\Delta y^2}
}.
\label{eq:jacobi}
\end{equation}
The iteration terminates when
$\max_{j,i}\bigl|\psi_{j,i}^{(k+1)}-\psi_{j,i}^{(k)}\bigr| < 10^{-12}$
or a prescribed maximum number of iterations (3000) is reached.

\begin{remark}
The Poisson matrix arising from the five-point stencil with
Dirichlet boundary conditions is symmetric positive definite and
strictly diagonally dominant; therefore the Jacobi iteration is
guaranteed to converge.
\end{remark}

\medskip\noindent
\textbf{Step 5. Update the temperature field.}
First compute the discrete Laplacian of $\theta$,
\[
(\Delta_h\theta)_{j,i}^n =
\frac{\theta_{j,i+1}^n-2\theta_{j,i}^n+\theta_{j,i-1}^n}{\Delta x^2}
+ \frac{\theta_{j+1,i}^n-2\theta_{j,i}^n+\theta_{j-1,i}^n}{\Delta y^2},
\]
and the velocity components via central differences,
\[
u_{j,i}^n = \frac{\psi_{j+1,i}^n-\psi_{j-1,i}^n}{2\Delta y},\qquad
v_{j,i}^n = -\frac{\psi_{j,i+1}^n-\psi_{j,i-1}^n}{2\Delta x}.
\]
Then the temperature is advanced with an explicit Euler step
applied to $\eqref{bianleyixia0703}_{2}$:
\begin{equation}
\theta_{j,i}^{n+1} = \theta_{j,i}^n
+ \Delta t\Bigl[
- \underbrace{\bigl(u_{j,i}^{n+1}(\partial_{x_1}\theta)_{j,i}^n
+ v_{j,i}^{n+1}(\partial_{x_2}\theta)_{j,i}^n\bigr)}_{\text{advection}}
- \underbrace{R\,u_{j,i}^{n+1}}_{\text{buoyancy}}
+ \underbrace{(\Delta_h\theta)_{j,i}^n}_{\text{diffusion}}
\Bigr].
\label{eq:theta_update}
\end{equation}
Here $(\partial_{x_2}\theta)_{j,i}^n$ is again computed by central
differences.  The Neumann boundary conditions for $\theta$ on the
lateral walls are enforced by setting
$\theta_{j,1}^{n+1}=\theta_{j,2}^{n+1}$ and
$\theta_{j,N_x+1}^{n+1}=\theta_{j,N_x}^{n+1}$.
\subsection{Simulation results and validation}

We now present numerical results obtained with the scheme described
above, taking the parameters of Example~1 and 
$(N_x,N_y)=(128,80)$, $\Delta t=10^{-5}$, $T=500$ in two cases: 1) $R=6.297>R_{c}$ and 2) $R=6.2<R_{c}$.
\begin{figure}[H]
    \centering
    {\includegraphics[width=4.8in]{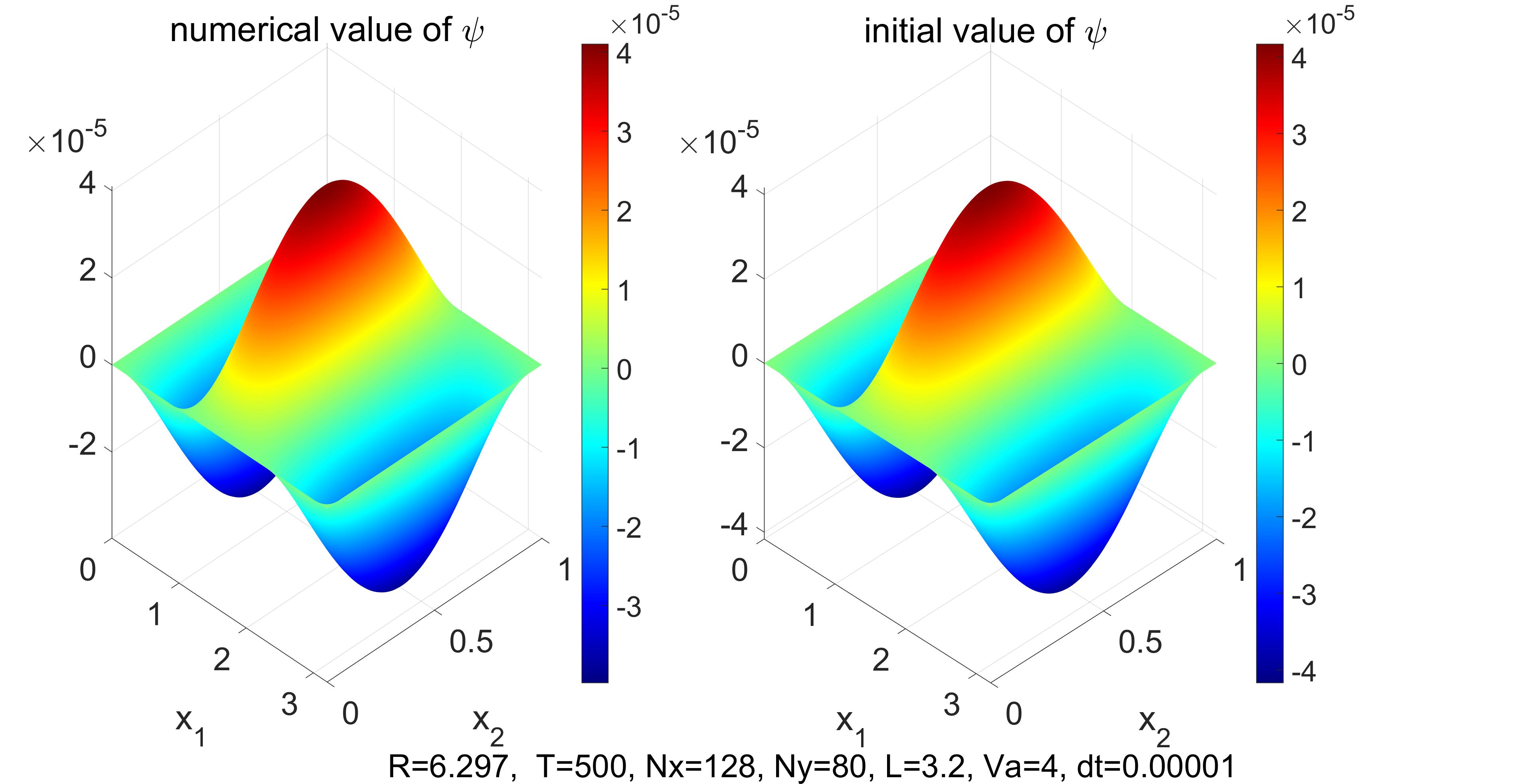}}
\caption{Stream function $\Psi$: numerical solution
(left) vs.\ initial value (right).
}.
\label{psi0702}
\end{figure}

\begin{figure}[H]
    \centering
    {\includegraphics[width=4.8in]{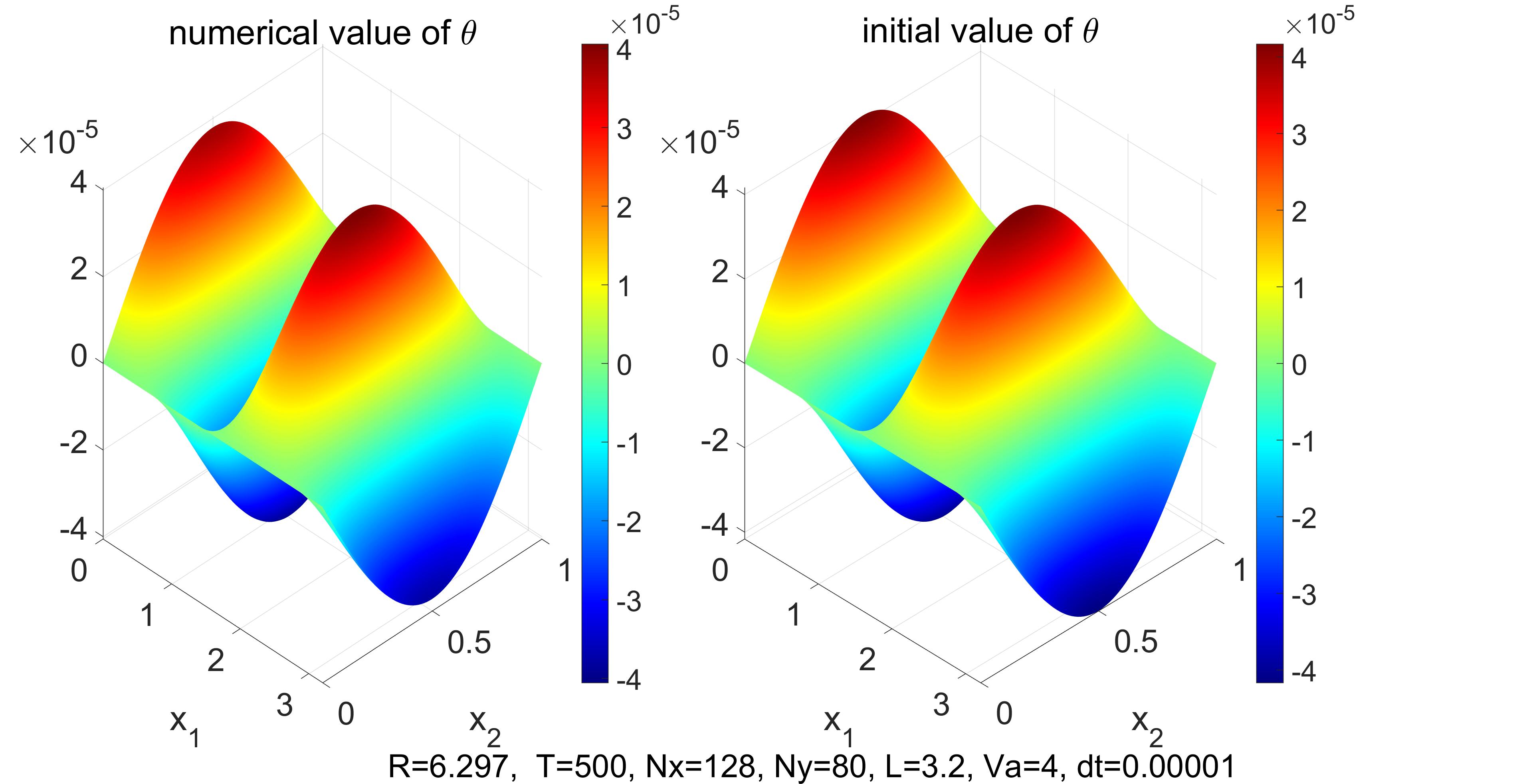}}
\caption{Temperature field $\theta$: numerical solution
(left) vs.\ initial value (right).}.
\label{theta0702}
\end{figure}

\begin{table}[htbp]
\centering
\begin{tabular}{ccccc}
\hline
$t$ & $\max|\psi|$ & $\max|\theta|$ & $\psi$ decrease (\%) & $\theta$ decrease (\%) \\
\hline
$0$   & $4.1656\times10^{-5}$ & $4.1657\times10^{-5}$ & --- & --- \\
$100$ & $4.161\times10^{-5}$  & $4.151\times10^{-5}$  & 0.11 & 0.35 \\
$200$ & $4.152\times10^{-5}$  & $4.131\times10^{-5}$  & 0.33 & 0.83 \\
$300$ & $4.138\times10^{-5}$  & $4.108\times10^{-5}$  & 0.66 & 1.39 \\
$400$ & $4.120\times10^{-5}$  & $4.084\times10^{-5}$  & 1.09 & 1.96 \\
$500$ & $4.098\times10^{-5}$  & $4.057\times10^{-5}$  & 1.62 & 2.61 \\
\hline
\end{tabular}
\caption{Evolution of the maximum amplitudes of the stream function and temperature field at $R=6.297$. Parameters: $L=3.2$, $V_a=4$, mesh $N_x=128$, $N_y=80$, time step $\Delta t=10^{-5}$, total time $T=500$.}
\label{tab:R6297_amp}
\end{table}
\begin{figure}[H]
    \centering
    {\includegraphics[width=4.8in]{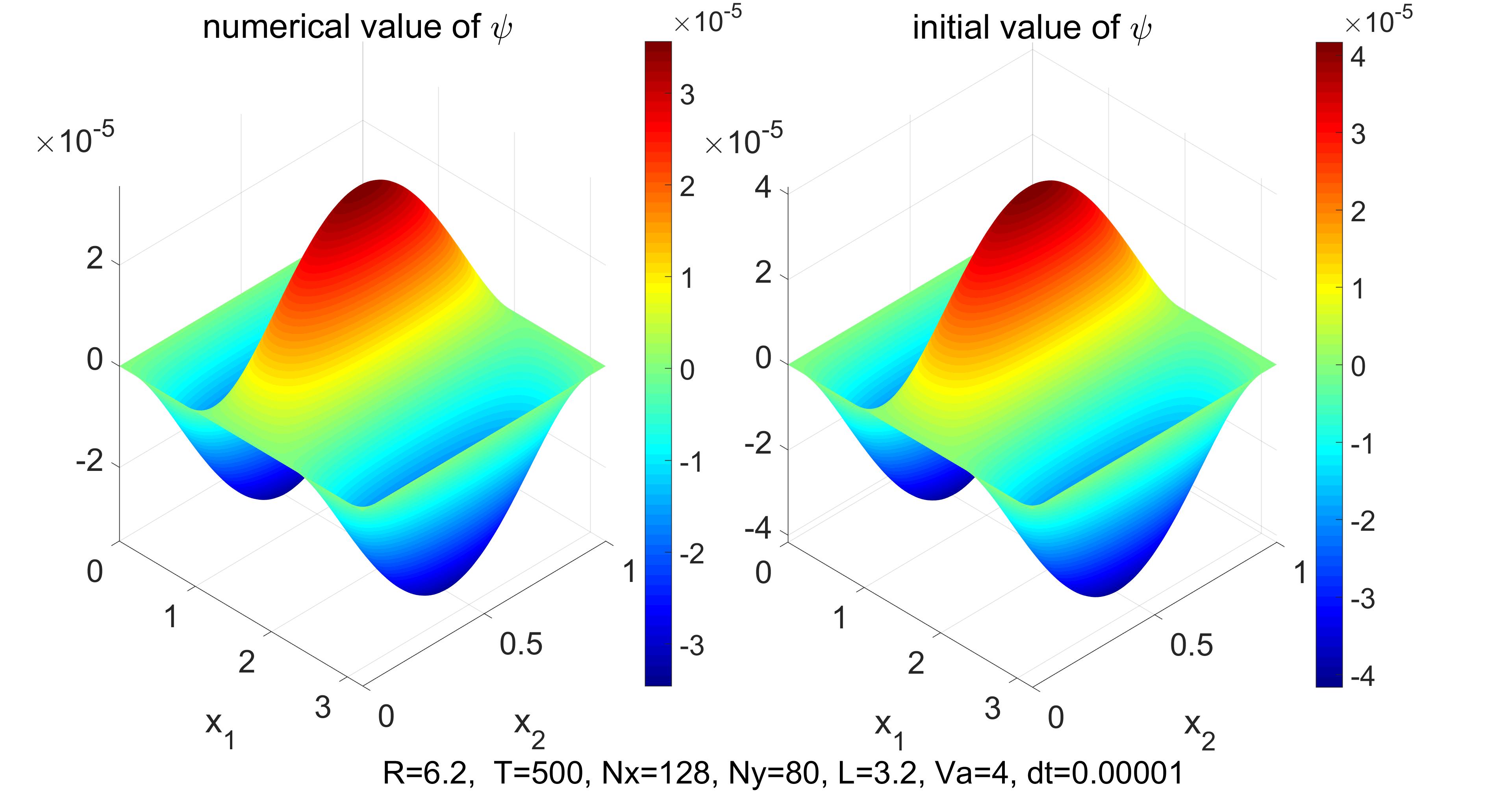}}
\caption{Stream function $\Psi$: numerical solution
(left) vs.\ initial value (right).
}.
\label{psi0714}
\end{figure}

\begin{figure}[H]
    \centering
    {\includegraphics[width=4.8in]{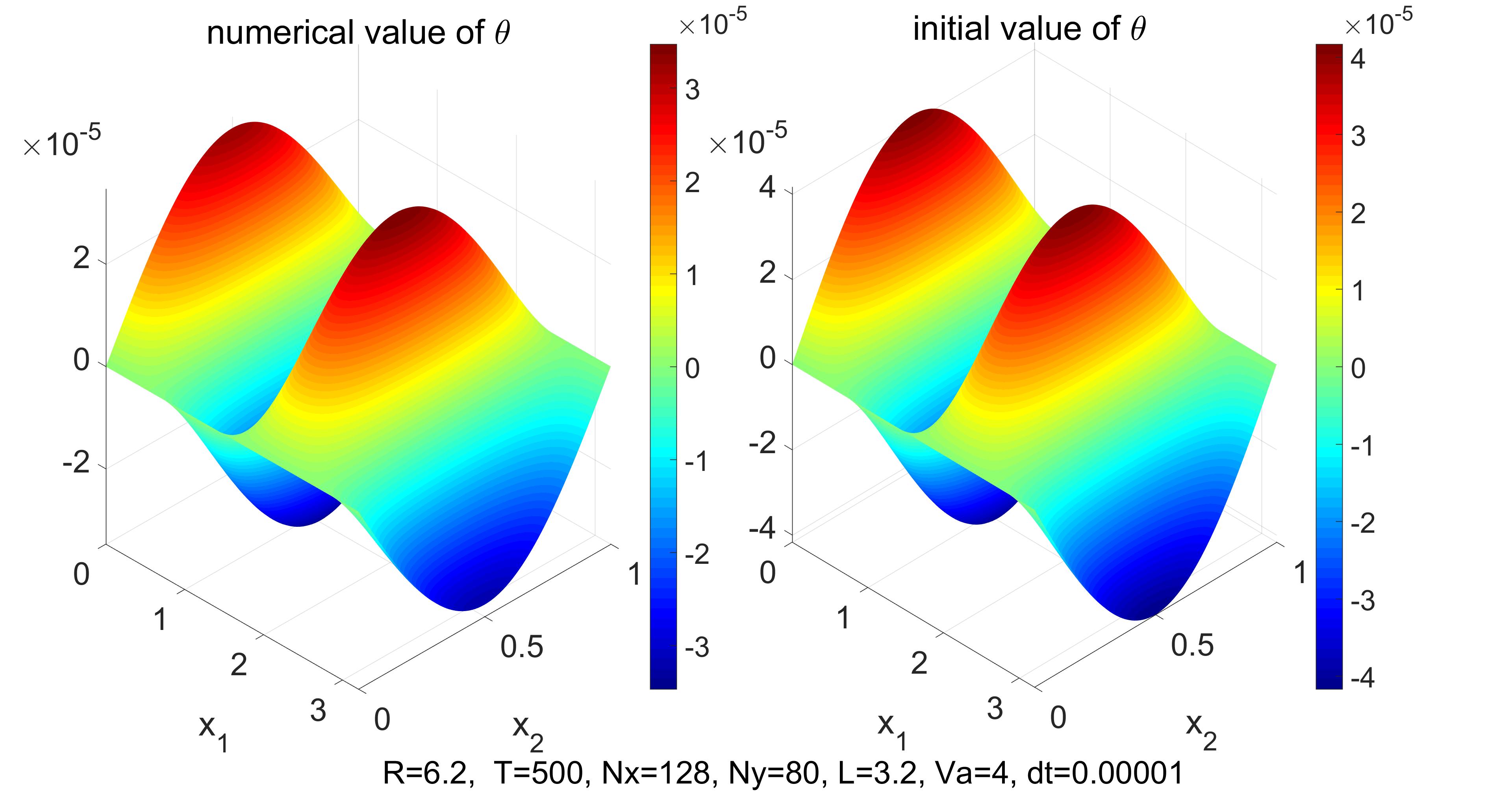}}
\caption{Temperature field $\theta$: numerical solution
(left) vs.\ initial value (right).
}.
\label{theta0714}
\end{figure}

\begin{table}[htbp]
\centering
\begin{tabular}{ccccc}
\hline
$t$ & $\max|\psi|$ & $\max|\theta|$ & $\psi$ decrease (\%) & $\theta$ decrease (\%) \\
\hline
$0$   & $4.1656\times10^{-5}$ & $4.1657\times10^{-5}$ & --- & --- \\
$100$ & $4.045\times10^{-5}$  & $3.974\times10^{-5}$  & 2.90 & 4.60 \\
$200$ & $3.924\times10^{-5}$  & $3.845\times10^{-5}$  & 5.80 & 7.70 \\
$300$ & $3.802\times10^{-5}$  & $3.717\times10^{-5}$  & 8.73 & 10.77 \\
$400$ & $3.680\times10^{-5}$  & $3.592\times10^{-5}$  & 11.66 & 13.77 \\
$500$ & $3.560\times10^{-5}$  & $3.470\times10^{-5}$  & 14.54 & 16.70 \\
\hline
\end{tabular}
\caption{Evolution of the maximum amplitudes of the stream function and temperature field at $R=6.2$. Parameters: $L=3.2$, $V_a=4$, mesh $N_x=128$, $N_y=80$, time step $\Delta t=10^{-5}$, total time $T=500$.}
\label{tab:R62_amp}
\end{table}

From \autoref{psi0702}, \autoref{theta0702} and \autoref{tab:R6297_amp}, one can see that the
numerical solution has evolved toward a steady state whose spatial
structure closely matches the theoretical bifurcation pattern
predicted by the center manifold reduction. And from \autoref{psi0714}, \autoref{theta0714} and \autoref{tab:R62_amp}, we see that the amplitudes of the stream function and temperature field decrease obviously.
These findings provide strong computational evidence that the
center manifold reduction developed in subsection~\ref{diyi0211}
accurately captures the nonlinear dynamics of the Darcy--B\'{e}nard
system \eqref{model}-\eqref{bianjian0814} near the primary bifurcation point, despite the non-analytic
of the underlying linear operator.

\subsection{The effect of $V_{a}$ on convection}\label{zhenfu0714}
From formula \eqref{linjie0926}, the critical value of $R$ is determined by the horizontal length $L$. We next investigate the effect of $V_{a}$ on convection. \autoref{vadeyingxiang0714} shows that $-a$ increases monotonically with $V_{a}$. Combined with \eqref{tehzenxiangliang0814} and \eqref{liangge0211}, it can be readily deduced that the amplitude of bifurcated solutions decreases monotonically with $V_{a}$ as $R$ is in the neighborhood of $R_{c}$ and $R>R_{c}$.  

\begin{figure}[H]
    \centering
    {\includegraphics[width=4.5in]{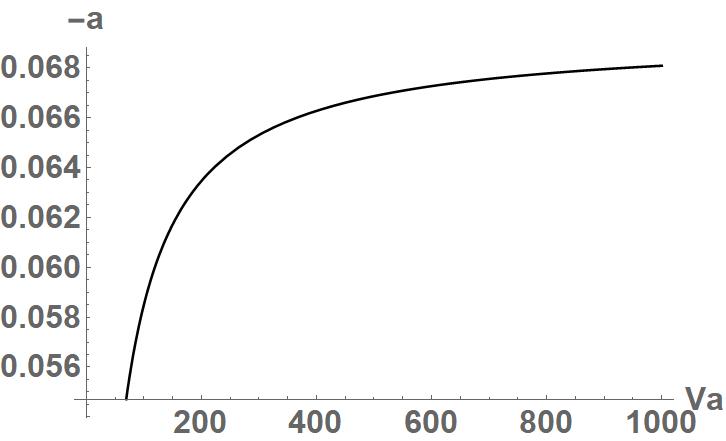}}
\caption{$L=3.2$, $R=R_{c}$ and $V_{a}\in [0.1,1000]$
}.
\label{vadeyingxiang0714}
\end{figure}

\section{Conclusions}\label{jielun0719}
In summary, this work establishes a rigorous center-manifold reduction for the Darcy–B\'{e}nard convection problem with non-zero Prandtl number, overcoming the loss of compactness of the linearized operator and non-Lipschitz nonlinearity. Beyond the specific DBC model, the methodological framework developed here—particularly the construction of inequalities that exploit partial dissipativity to compensate for the lack of full analyticity—offers a general approach to center-manifold theory for systems with partial dissipativity. A natural direction for future work is the application of this framework to more complex models, such as the Darcy–Forchheimer–B\'{e}nard system, where similar issues arise.

\end{document}